\date{\today}
\newcommand{\vo}{\vec{o}\@ifnextchar{^}{\,}{}}
\theoremstyle{plain}
\newtheorem{thm}{Theorem}
\newtheorem{lem}[thm]{Lemma}
\newtheorem{prop}[thm]{Proposition}
\newtheorem{conj}[thm]{Conjecture}
\newtheorem{cor}[thm]{Corollary}
\theoremstyle{definition}
\newtheorem{defe}[thm]{Definition}
\theoremstyle{remark}
\newtheorem{rem}[thm]{Remark}
\newtheorem{ass}[thm]{Assumption}
\definecolor{red}{rgb}{1,0,0}
\definecolor{orange}{rgb}{1,0.5,0}
\definecolor{purple}{rgb}{.5,.2,.8}
\definecolor{blue}{rgb}{.2,.2,.8}
\definecolor{green}{rgb}{.4,.6,.4}
\newcommand{\ra}{\rightarrow}
\def\det{\mathrm{det}}
\def\bes{\begin{equation*}}  \def\ees{\end{equation*}} 
\def\bi{\begin{itemize}}   \def\ei{\end{itemize}}
\def\ba{\begin{eqnarray}} \def\ea{\end{eqnarray}}    
\def\bl{\begin{align}}    \def\el{\end{align}}       
\def\bls{\begin{align*}}    \def\els{\end{align*}}
\newcommand{\bC}{\mathbb{C}}
\newcommand{\bCt}{\mathbb{C}^\times}
\newcommand{\bX}{{X}}
\newcommand{\bA}{{A}}
\newcommand{\bY}{{Y}}
\newcommand{\GL}{\mathrm{GL}}
\newcommand{\bR}{R}
\newcommand{\bZ}{\mathbb{Z}}
\newcommand{\cT}{\mathcal{T}}
\newcommand{\PGL}{\mathrm{PGL}} 
\newcommand{\Sp}{\mathrm{Sp}}
\newcommand{\SO}{\mathrm{SO}}
\newcommand{\Irr}{\mathrm{Irr}}
\newcommand{\Hom}{\mathrm{Hom}}
\newcommand{\cO}{\mathcal{O}}
\newcommand{\cN}{\mathcal{N}}
\newcommand{\Fq}{\mathbb{F}_q}
\newcommand{\Fqr}{\mathbb{F}_{q^r}}
\newcommand{\fg}{\mathfrak{g}}
\newcommand{\ft}{\mathfrak{t}}
\newcommand{\fl}{\mathfrak{l}}
\newcommand{\chG}{\check{G}}
\newcommand{\chT}{\check{T}}
\newcommand{\bbR}{\mathbb{R}}
\subjclass[2010]{14M35, 14D23, 11G25}
\keywords{Character varieties, representation varieties, additive character varieties, $E$-polynomials, generic conjugacy classes}
\begin{document} 
\title{Counting points on generic character varieties}

\author{Stefano Giannini$^1$}
\author{Masoud Kamgarpour$^2$}
\author{GyeongHyeon Nam$^3$}
\author{Bailey Whitbread$^4$}

\address{$^1$School of Computer and Mathematical Sciences, The University of Adelaide, Australia}
\email{\href{mailto:a1981079@adelaide.edu.au}{a1981079@adelaide.edu.au}}

\address{$^2$School of Mathematics and Physics, The University of Queensland, Australia}
\email{\href{mailto:masoud@uq.edu.au}{masoud@uq.edu.au}}

\address{$^3$Department of Mathematics and System Analysis, Aalto University, Finland}
\email{\href{mailto:gyeonghyeon.nam@aalto.fi}{gyeonghyeon.nam@aalto.fi}}

\address{$^4$School of Mathematics and Statistics, The University of Sydney, Australia}
\email{\href{mailto:b.whitbread@usyd.maths.edu.au}{b.whitbread@usyd.maths.edu.au}}

\begin{abstract} We count points on character varieties  associated with punctured surfaces and regular semisimple generic conjugacy classes in reductive groups. We find that the number of points are palindromic polynomials. This suggests a $P=W$ conjecture for these varieties. We also count points on the corresponding additive character varieties and find that the number of points are also polynomials, which we conjecture have non-negative coefficients. These polynomials can be considered as the reductive analogues of the Kac polynomials of comet-shaped quivers. 
\end{abstract}

\maketitle

\tableofcontents

\section{Introduction} 
\subsection{Overview} 
 
 Character varieties of surface groups play a central role in diverse areas of mathematics and physics, such as gauge theory \cite{AtiyahBott83, HitchinProceedingofLMS87}, mirror symmetry \cite{HauselThaddeusInventiones2003}, non-abelian Hodge theory \cites{Simpson92, Simpson94i}, and the geometric Langlands program \cites{BeilinsonDrinfeld, BenZviNadler}. Consequently, the topology and geometry of these varieties have been the focus of active research for decades. In their groundbreaking works \cite{HRV08, HLRV11}, Hausel, Letellier, and Rodriguez-Villegas counted points on $\GL_n$-character varieties,  leading to deep conjectures about the geometry of these varieties. 
 
 Many of these conjectures have since been proven. Notable achievements include the determination of the Betti numbers \cite{SchiffmannAnnals2016, MellitInventiones21, Mellit20}, the establishment of curious Poincaré duality and curious Hard Lefschetz \cite{MellitDecomposition}, and the recent resolution of the $P=W$ conjecture \cite{hausel2022p, maulik, HoskinPW}. 

In \cite{HLRV11}, the authors also considered the \emph{additive} version of character varieties, where the natural commutation relation is replaced by its Lie algebraic analogue. According to the Riemann--Hilbert correspondence, points on the usual (multiplicative) character varieties correspond to flat connections on vector bundles. When $g=0$, the points on the additive character variety correspond to flat connections on the \emph{trivial} bundle. Thus, when $g=0$, the additive character variety is a dense open subvariety of the usual character variety, sometimes known as the \emph{open de Rham space} \cite{HWW23}. 

This direct relationship between additive and multiplicative character varieties is lost in higher genus; nevertheless, it was conjectured in \cite{HLRV11} that the cohomology of the additive character variety captures the pure part of the cohomology of the multiplicative character variety in all genus. As evidence, the authors proved that additive character varieties are pure, i.e., their cohomology carries a pure Hodge structure. This result relied on the beautiful bridge built by Crawley-Boevey between additive character varieties and quiver varieties associated to comet-shaped quivers \cite{CBIndecomposable}. Under this correspondence, the number of points on the additive character variety matches the Kac polynomials of the corresponding quiver. Utilising these ideas, Hausel, Letellier, and Rodriguez-Villegas proved Kac's conjecture for arbitrary quivers \cite{HLVKacAnnals2013}.

Despite significant advances in understanding the geometry of character varieties associated with $\GL_n$ (and more generally, groups of type $A$), there is limited understanding of character varieties associated with arbitrary reductive groups $G$. Understanding these general cases is crucial for applications in fields such as Langlands duality and mirror symmetry, where the Langlands dual group plays a central role. 

In this paper, we count points on both multiplicative and additive character varieties associated with punctured surface groups and regular semisimple conjugacy classes in arbitrary reductive groups. Our work builds upon and generalises the contributions of Camb\`{o} \cite{Cambo17}, who studied $\mathrm{Sp}_{2n}$-character varieties; \cite{BK22}, who investigated character stacks associated with compact surface groups; and \cite{KNP25}, who examined character varieties in presence of regular unipotent conjugacy classes at the punctures.

One of our main results is that the number of points of multiplicative character varieties over finite fields are palindromic polynomials. This hints at a $P=W$ conjecture, a curious Poincaré duality, and a curious Hard Lefschetz property for these varieties.\footnote{To make further progress in this direction, one needs a reductive analogue of Markman's result \cite{MarkmanGenerators2001} that the tautological classes (and those coming from the underlying surface) generate the cohomology.} Another important contribution of this paper is an explicit polynomial formula for the number of points of additive character varieties.  The resulting polynomials can be considered as the reductive analogues of the Kac polynomials of comet-shaped quivers. Calculations suggest these polynomials have non-negative coefficients\footnote{See \cite{Bailey} for the computer code written using CHEVIE \cite{GHLMP96,Michel15}.}; however, lack of direct connection with quiver varieties makes proving this a non-trivial task.

Finally, one of the main contributions of \cite{HRV08, HLRV11} was to relate the number of points of $\GL_n$-character varieties with Macdonald polynomials and use this to provide a conjectural formula for the mixed Hodge polynomials of these varieties.\footnote{As far as we know, these conjectures (and the purity conjecture mentioned above) are still open.} Formulating analogous conjectures in the reductive setting is an interesting open problem.

\newpage

\subsection{Generic character varieties} \label{s:main}

Let $g$ and $n$ be integers satisfying $g\geq 0$ and $n\geq 1$. Let  $\Gamma=\Gamma_{g,n}$ be the fundamental group of an orientable surface with genus $g$ and $n$ punctures:
\[
\Gamma = {\langle a_1,b_1,\ldots,a_g,b_g,c_1,\ldots,c_n\,|\, {[a_1,b_1]\cdots[a_g,b_g]c_1\cdots c_n}=1\rangle}.
\]
Let $G$ be a connected split reductive group over $\Fq$ with split maximal torus $T$ and centre $Z$.  Let $C=(C_1,\ldots, C_n)$ be a tuple of conjugacy classes of $G$. 
 The \emph{representation variety} associated to $(\Gamma, G, C)$ is defined by 
\[
\bR :=  \bigg\{(a_1,b_1,\ldots ,a_g,b_g, c_1,\ldots ,c_n)\in G^{2g}\times \prod_{i=1}^n C_i\ \bigg|\  
\prod_{i=1}^g [a_i,b_i]\prod_{i=1}^n c_i = 1 \bigg\}.
\]
This is an affine scheme of finite type over $\Fq$ with an action of $G$ by conjugation. The associated character variety is the GIT quotient
\[
\bX:=\bR/\!\!/G. 
\]
We also have the corresponding character stack 
\[ [\bX]:=[\bR/(G/Z)].
\]

\begin{ass} \label{a:mainAss}
We will be working under the following assumptions: 
\begin{itemize} 
\item The centre $Z=Z(G)$ is connected and the ground characteristic is very good for $G$. 
\item $\prod_{i=1}^n C_i\subseteq [G,G]$, otherwise $\bR$ would be empty. 
\item Each conjugacy class $C_i$ contains an element $S_i\in T(\Fq)$ which is strongly regular; i.e., $C_G(S_i)=T$ (cf.\ \cite{Steinberg65,Humphreys95}).  
\item The tuple $C=(C_1,\ldots ,C_n)$ is \emph{generic} in the sense of \S \ref{s:generic}.
\item $2g+n\geq 3$, otherwise the generic assumption implies that $X$ is empty. 
\end{itemize} 
\end{ass}

As explained in \S \ref{s:genericVar}, 
the generic assumption ensures that the action of $G/Z$ on $\bR$ has finite \'etale stabilisers and that $\bR$ is smooth and equidimensional. Thus, $[X]$ is a smooth Deligne--Mumford stack. Moreover, $X$ and $[X]$ have the same number of points over a finite field, and are equidimensional of pure dimension
\[
\dim(\bX)=(2g-2+n)\dim(G)+2\dim(Z)-n\cdot \dim(T).
\]

Before stating our main theorem, we need to recall some facts about polynomial count varieties and $E$-polynomials. We refer the reader to  \cite{HRV08} or \cite{LetellierVillegas} for further details. A variety $S$ over $\Fq$ is said to be \emph{polynomial count} if there exists a polynomial $f_S(t)\in \mathbb{C}[t]$ such that 
\[
|S(\mathbb{F}_{q^m})| = f_S(q^m),\qquad \forall m\in \mathbb{Z}_{>0}. 
\]
In this case, $f_S$ is known as the \emph{counting polynomial} of $S$. An important fact is that if $S$ is polynomial count then its counting polynomial equals the \emph{$E$-polynomial} of $S$. The latter is a certain specialisation of the mixed Poincar\'e polynomial of $S$ (defined using \'etale cohomology). Since the $E$-polynomial has integer coefficients, it follows that the counting polynomial must belong to $\mathbb{Z}[t]$. 

The $E$-polynomial gives useful topological information about the variety. For instance, the leading coefficients of the $E$-polynomial equals the number of irreducible components of top dimension, and that the value of the $E$-polynomial at $1$ equals the Euler characteristic. 

Our main result states that generic character varieties are, after a finite base change, polynomial count:

\begin{thm} \label{t:countX}  There exists a polynomial $f_X \in \mathbb{Z}[t]$ (given explicitly in \S \ref{s:countXPrecise}) and a positive integer $r$ such that 
\[
|X(\mathbb{F}_{q^{rm}})| = f_X(q^{rm}),\qquad \forall m\in \mathbb{Z}_{>0}. 
\] 
\end{thm} 
The starting point is the observation that since $X$ is the coarse moduli space of the Deligne-Mumford stack $[X]$, we have $|X(\Fq)|=|[X](\Fq)|$. An application of Lang's theorem then implies 
\[
\displaystyle |[X](\Fq)|=\frac{|R(\Fq)||Z(\Fq)|}{|G(\Fq)|}.
\] 
Next, the Frobenius Mass Formula allows one to write $|\bR(\Fq)|$ in terms of  irreducible complex representations of the finite group $G(\Fq)$. We then use results of Deligne and Lusztig \cite{DL76} to analyse this formula and prove the theorem; see \S \ref{s:countX} for details. 

Let $\overline{X}$ denote the base change of $X$ to $\overline{\mathbb{F}_q}$ and $E(\overline{X})$ the $E$-polynomial of $\overline{X}$. The previous theorem implies that $E(\overline{X})=f_X$. By analysing the counting polynomial $f_X$, we will obtain:

\begin{cor} \label{c:X}
\begin{enumerate} 
\item[(a)] $E(\overline{X})$ is palindromic.  
\item[(b)] $|\pi_0(\overline{X})|=|\pi_0(Z(\check{G}))|$, where $\check{G}$ is the Langlands dual group. 
\item[(c)] If $g>1$, or $g=1$ and $Z$ is non-trivial, then $\chi(\overline{X})=0$. 
\end{enumerate}  
\end{cor}  

Expressions for the Euler characteristic in the case $g=0$ or $g=1$ and $Z=\{1\}$ is given in \S \ref{s:EulerX}. 
The first part of the above corollary provides evidence for the following:

\begin{conj} The analogue of  $\bX$ over the complex numbers is Hodge--Tate, and satisfies the curious Poincar\'{e} duality and curious  hard Lefschetz. Moreover, there is a version of the $P=W$ conjecture for $\bX$. 
\end{conj}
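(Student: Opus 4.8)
The plan is to transport the question from the Betti side $\bX$ to the Dolbeault side and to exploit the geometry of the Hitchin fibration, following the template of Hausel, Letellier and Rodriguez-Villegas in type $A$ \cite{HRV08, HLRV11} and of Hoskins' resolution of $P=W$ \cite{HoskinPW}. First I would construct the moduli space $\cM_{\mathrm{Dol}}$ of strongly parabolic $G$-Higgs bundles on a compact Riemann surface of genus $g$ with $n$ marked points, with residues whose semisimple parts are conjugate to the strongly regular semisimple elements $S_i$. Genericity of $C$ should guarantee that every such Higgs bundle is stable, so that $\cM_{\mathrm{Dol}}$ is smooth and quasi-projective of the expected dimension, and the tame non-abelian Hodge correspondence for $G$-local systems produces a real-analytic diffeomorphism $\bX(\C)\cong\cM_{\mathrm{Dol}}$. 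The two sides then have canonically identified singular cohomology, but a priori different mixed Hodge structures, and the Dolbeault side carries in addition the perverse filtration coming from the Hitchin map $h\colon \cM_{\mathrm{Dol}}\to\cB$ onto the affine Hitchin base.

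For Hodge--Tateness and purity I would use that $\cM_{\mathrm{Dol}}$ is \emph{semiprojective}: it carries the scaling action $\theta\mapsto t\theta$ of $\bCt$, the fixed locus is projective, and the downward Bialynicki-Birula flow $t\to 0$ converges because $h$ is proper. By Hausel's criterion, semiprojective varieties have pure cohomology, and the Bialynicki-Birula decomposition identifies $H^\ast(\cM_{\mathrm{Dol}})$ additively with the cohomology of the fixed locus. One then reduces Hodge--Tateness to the fixed components, which parametrize $G$-Higgs bundles underlying complex variations of Hodge structure; their structure groups reduce to Levi subgroups, so an induction on the semisimple rank brings one to the torus case, where the relevant moduli are assembled from tori, affine spaces and Jacobians and are manifestly of Tate type. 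A more economical route, which however presupposes the reductive analogue of Markman's generation theorem flagged in the footnote to Theorem~\ref{t:countX}, is to observe that the tautological classes and the classes pulled back from the surface are all of Hodge--Tate type; generation plus the polynomial point count of Theorem~\ref{t:countX} and Katz's specialization theorem \cite{HRV08} then pin down the mixed Hodge polynomial.

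Turning to $P=W$: the weight filtration $W_\bullet$ on $H^\ast(\bX)$ comes from the mixed Hodge structure, while the perverse filtration $P_\bullet$ on $H^\ast(\cM_{\mathrm{Dol}})$ is defined by the decomposition theorem applied to the proper map $h$; the assertion is that $P_k=W_{2k}=W_{2k+1}$ under the non-abelian Hodge identification. To prove it I would (i) establish a support theorem for $h$ in the parabolic $G$-setting --- that the only summand of full support is the one over all of $\cB$ --- generalizing the work of Ng\^o, Chaudouard--Laumon and de Cataldo--Maulik--Shen, thereby obtaining an explicit description of $P_\bullet$; (ii) show that both filtrations are multiplicative for the cup product and compatible with the $H^\ast(\Sigma)$-module structure coming from the surface $\Sigma$; and (iii) match the $P$- and $W$-degrees on a generating set of cohomology classes, for which the generation theorem is again the decisive input. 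Curious Poincar\'e duality then follows from ordinary Poincar\'e duality on the smooth $\bX$ combined with $P=W$, and curious hard Lefschetz follows by transporting the relative hard Lefschetz operator for $h$ (cup product with an $h$-relatively ample class) across $P=W$; on the Dolbeault side curious hard Lefschetz can also be proved directly from the support theorem and relative hard Lefschetz, in the spirit of de Cataldo--Hausel--Migliorini.

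The hard part is twofold. Geometrically, one must develop the moduli theory of parabolic $G$-Higgs bundles with strongly regular semisimple residues --- existence and smoothness under genericity, properness and flatness of the Hitchin map, semiprojectivity, and the support theorem --- uniformly enough in $G$ to cover the exceptional groups; for classical $G$ much of this is in the literature, but the general reductive formalism still needs to be set up. Cohomologically, the crux is the reductive analogue of Markman's theorem that the tautological classes, together with the surface classes, generate $H^\ast(\bX)$: without it neither Hodge--Tateness nor $P=W$ can be closed off by this strategy, and it is precisely the ingredient identified as missing in the footnote to Theorem~\ref{t:countX}. Even granting generation, establishing $P=W$ is expected to be as demanding here as it was in type $A$.
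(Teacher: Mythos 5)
This statement is a \emph{conjecture} in the paper, not a theorem: the authors explicitly do not prove it, and the footnote to Theorem~\ref{t:countX} flags the reductive analogue of Markman's generation theorem as the missing ingredient needed to make progress. So there is no proof in the paper to compare against; the most one can do is assess your strategy as a research plan.

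As a plan, what you write is the expected line of attack and is consistent with the authors' own remarks, but it is not a proof, and you are candid about this. Several of its steps are themselves open problems of comparable difficulty to the conjecture: a tame non-abelian Hodge correspondence for parabolic $G$-Higgs bundles with strongly regular semisimple residues and arbitrary reductive $G$; semiprojectivity and properness of the corresponding Hitchin map; a support theorem for this parabolic $G$-Hitchin fibration generalizing Ng\^o and de~Cataldo--Maulik--Shen; and above all the generation theorem. Two smaller caveats worth recording. First, in the Bialynicki-Birula/induction step you reduce to fixed loci ``of Tate type''; for general $G$ the fixed components correspond to complex variations of Hodge structure with Levi structure group, and showing these moduli are Hodge--Tate is not obviously an induction on semisimple rank alone --- it requires knowing the cohomology of parabolic $L$-bundle moduli, which is again open outside type $A$. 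Second, curious Poincar\'e duality and curious hard Lefschetz concern the weight filtration on $\bX$ intrinsically; deducing them from $P=W$ plus relative hard Lefschetz is logically fine, but one of the selling points of the paper is that the palindromicity of $|\bX(\Fq)|$ (proved in \S\ref{s:palindromic} via Alvis--Curtis duality) already gives the $E$-polynomial shadow of curious Poincar\'e duality unconditionally, which your proposal does not use. In short: the approach is reasonable and matches the authors' intent, but it assembles open ingredients rather than supplying a proof, and the conjecture remains open.
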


 When $G=\GL_n$, Mellit constructed a cell decomposition for $\bX$, thus providing a conceptual explanation of the fact that $\bX$ is polynomial count and proving in addition that it is Hodge--Tate and satisfies curious Hard Lefschetz and curious Poincar\'{e} duality \cite{MellitDecomposition}. We expect that $\bX$ has an analogous cell decomposition for arbitrary reductive group $G$.

\begin{rem} 
\begin{enumerate} \label{r:main}

\item[(i)] The polynomial $f_X$ depends only on $(G,g,n)$; i.e., it is independent of the conjugacy classes $C_i$ (provided they satisfy Assumption \ref{a:mainAss}).

\item[(ii)] We expect that the theorem holds without assuming that the centre $Z(G)$ is connected; however, the expression for $|X(\Fq)|$ becomes more complicated due to disconnected centralisers in the dual group. 
\item [(iii)] Our analysis shows that the stacky count  $|[\bX](\Fq)|$ is a polynomial in $q$ even if we drop the generic assumption. We expect that the regularity assumption is also not required for the polynomial property. We emphasise, however, that without the generic assumption, the variety $X$ is quite different from the stack $[X]$.

\item[(iv)] There are two reasons for the finite base change in the main theorem.  First, the inclusion $[G(\Fq), G(\Fq)] \subseteq [G,G](\Fq)$ may be proper (e.g. for $G=\PGL_n$). However, given $S\in [G,G](\Fq)$, there exists a positive integer $m$, such that  $S\in [G(\mathbb{F}_{q^m}), G(\mathbb{F}_{q^m})]$. 
Second, if $W'$ is a reflection subgroup of $W$, then $T^{W'}$ may be disconnected. The group of components $\pi_0=\pi_0(T^{W'})$ is a finite \'etale group which may be non-split (i.e. the action of Galois group may be non-trivial). This implies that $|\pi_0(\Fq)|$ depends on the residue of $q$ modulo $|\pi_0(\overline{\Fq})|$. However, as we shall see, the fact that $\mathrm{char}(k)$ is good for $G$ implies that $q$ is co-prime to $|\pi_0(\overline{\Fq})|$. Thus, there exists a positive integer $n$ such that 
\[
q^n \equiv 1 \mod |\pi_0(\overline{\Fq})|.
\]
It follows that $|\pi_0(\mathbb{F}_{q^n})|=|\pi_0(\overline{\Fq})|$.

\end{enumerate} 
\end{rem}

\subsection{The additive analogues}\label{s:additive}  We now consider the additive (or Lie-algebraic) analogues of the above discussions. Let $\fg$ be the Lie algebra of $G$ and  ${\cO=(\cO_1,\ldots, \cO_n)}$ a tuple of adjoint $G$-orbits on $\fg$. Define the \emph{additive representation variety}  by  
\[
\bA :=  \bigg\{(a_1,b_1,\ldots,a_g,b_g, c_1,\ldots ,c_n)\in \fg^{2g}\times \prod_{i=1}^n \cO_i\ \bigg|\  
\sum_{i=1}^g [a_i,b_i]+\sum_{i=1}^n c_i = 0 \bigg\}.
\]
The group $G$ acts on $\bA$ by the adjoint action. 
We call the resulting GIT quotient  
\[
\bY:=\bA/\!\!/G
\]
the \emph{additive character variety} associated to $(\Gamma, G, \cO)$.

We will be working under the obvious additive analogues of Assumption \ref{a:mainAss}. Then the action of $G/Z$ on $\bA$ has finite \'etale stabilisers, $\bA$ is smooth and equidimensional,  and $\bY$ is also equidimensional of the same dimension as $\bX$. 

Our main result states that the variety $Y$ is, after a quadratic base change, polynomial count:

\begin{thm} \label{t:countY}  There exists a polynomial $f_Y \in \mathbb{Z}[t]$ (given explicitly in \S \ref{s:countYPrecise}) such that 
\[
|Y(\mathbb{F}_{q^{2m}})| = f_Y(q^{2m}),\qquad \forall m\in \mathbb{Z}_{>0}. 
\] 
\end{thm}

The starting point of the proof is an additive analogue of the Frobenius formula which gives an expression for $|\bY(\Fq)|$ in terms of the Fourier transforms of $G$-invariant orbits on $\fg$. We then use a result of Kazhdan and Letellier to evaluate these Fourier transforms and compute $|Y(\Fq)|$; see \S \ref{s:countY} for details.

\begin{rem} The reason for the quadratic base change is that computing $|Y(\Fq)|$ involves the size of a Springer fibre over $\Fq$. It was shown by Springer that this is a polynomial if $q$ is large enough.  This was refined in \cite[Theorem 3.10]{GoodwinRohrle} where it was shown that  (under the assumption that $Z$ is connected and $p$ is good for $G$) the size of the Springer fibre is a polynomial in $q$, unless $G$ has a factor of type $E_8$, in which case a quadratic base change may be necessary. 
\end{rem}

Let $\overline{Y}$ be the base change of $Y$ to $\overline{\Fq}$. By analysing the counting polynomial $f_Y$, we find: 

\begin{cor} \label{c:Y}
\begin{enumerate} 
\item[(a)] The variety $\overline{Y}$ is connected.  
\item[(b)] If $G$ is simple of rank less than $7$, $0\leq g\leq 10$ and $1\leq n\leq 1000$, then the coefficients of the counting polynomial $f_Y$ are non-negative.
\end{enumerate} 
\end{cor}

Part (b) of this corollary is established with code written using CHEVIE \cite{GHLMP96,Michel15}; see \cite{Bailey} for the relevant code.
The counting polynomial can also be used to give expressions for the Euler characteristic; see \S \ref{s:EulerY}.

When $G=\GL_n$, the additive character variety $Y$ is isomorphic to a quiver variety and $|Y(\Fq)|$ is a polynomial with non-negative coefficients \cite{HLRV11}.  Combined with the previous corollary, this provides evidence for the following:

\begin{conj} \label{c:purity}
The cohomology $H_c^*(\overline{\bY},\mathbb{Q}_\ell)$ is pure; thus, $|Y(\Fq)|$ has non-negative coefficients. 
\end{conj}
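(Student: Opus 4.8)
Conjecture \ref{c:purity} is open; what follows is a plan of attack. The implication in its statement is the usual one: $\bY$ is smooth (or, under the genericity hypothesis, an orbifold with the rational cohomology of its smooth model), so $H^j_c(\bY)$ has weights $\le j$ for every $j$; if $H^*_c(\bY)$ is moreover \emph{pure} (weights exactly $j$) and of Tate type, then the odd cohomology vanishes and $E(\bY;u,v)=\sum_{i\ge 0}\dim H^{2i}_c(\bY)\,(uv)^i$, which by Theorem \ref{t:countY} is precisely $|\bY(\Fq)|$ with $q=uv$; non-negativity follows. Thus the task reduces to showing that $H^*(\bY)$ is pure and Hodge--Tate, and I would attack this along two complementary routes.

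The first route imitates the $\GL_n$ case of \cite{HLRV11}, where purity flows from Crawley-Boevey's identification \cite{CBIndecomposable} of $\bY$ with a Nakajima quiver variety $\mathfrak{M}_\xi$ for generic $\xi$: the hyperk\"ahler structure allows one to rotate the pair (level $\xi$, stability $\theta$) to a semiprojective resolution $\mathfrak{M}_\theta\to\mathfrak{M}_0$, whose cohomology is pure and Hodge--Tate (being generated by tautological classes), and the resulting diffeomorphism transports this back to $\bY$. The plan is to construct, for an arbitrary reductive $G$, a holomorphic-symplectic variety $\bY'$ that (i) is \emph{semiprojective} in the sense of Hausel and Rodriguez-Villegas \cite{HRV08} --- it carries a $\bCt$-action for which $\lim_{t\to 0}t\cdot y$ exists for all $y$ and the fixed locus is proper --- and (ii) is diffeomorphic to $\bY$. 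Semiprojectivity then gives purity of $H^*(\bY')$ by the general theorem, expected tautological generation of $H^*(\bY')$ gives the Tate-type statement, and combining these Betti numbers with the polynomial count of Theorem \ref{t:countY} pins down the mixed Hodge structure of $\bY$ itself. The natural candidate for $\bY'$ is a moduli space of strongly parabolic $G$-Higgs bundles on a genus-$g$ curve with $n$ marked points and generic parabolic weights, with the $\bCt$-action scaling the Higgs field (the Hitchin map supplying the limits and the properness of the fixed locus). A concrete first step is the degeneration $\mathcal{Y}\to\mathbb{A}^1$ whose fibre over $s$ is the additive character variety attached to the rescaled orbits $s\cdot\cO_i$: this family is equivariant for the scaling $(a_i,b_i,c_i)\mapsto(ta_i,tb_i,t^2c_i)$, its central fibre over $s=0$ is the affine quotient $\{(a_i,b_i)\in\fg^{2g}\mid\sum_i[a_i,b_i]=0\}/\!\!/G$, which carries a contracting $\bCt$-action and is therefore pure, and one would try to propagate purity from the special fibre to the generic fibre $\bY$.

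The second route bypasses purity and attacks the arithmetic consequence directly: one would rewrite the explicit formula of \S\ref{s:countYPrecise} --- produced by the additive Frobenius formula and the Kazhdan--Letellier evaluation of Fourier transforms of invariant functions on the orbits $\cO_i$ --- as a manifestly positive expression, say a sum over Levi subgroups, or over Weyl-group data, of products of $q$-analogues with non-negative coefficients, in the spirit of the positive formulas for Kac polynomials of comet-shaped quivers used in the proof of Kac's conjecture \cite{HLVKacAnnals2013}. The explicit low-rank polynomials of \S\ref{s:Examples}, already verified by computer, would suggest the conjectural positive shape. When $g=0$, where $\bY$ is a dense open subvariety of the multiplicative character variety $\bX$, one might instead try to exploit an analogue of Mellit's cell decomposition \cite{Mellit20} of $\bX$ and control the complementary divisor --- although purity need not be inherited by an open subset.

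The main obstacle is exactly the one flagged in the text: in type $A$ every ingredient of the first route --- the hyperk\"ahler metric, the rotation of level and stability parameters, the tautological generation of cohomology, indeed the very existence of a semiprojective model --- is supplied by Crawley-Boevey's quiver-variety description of $\bY$, and no such bridge is known for a general reductive $G$. Producing \emph{any} semiprojective, or merely cohomologically pure and Hodge--Tate, model of the reductive additive character variety in genus $g\ge 1$, where even the clean relationship with parabolic Higgs bundles degenerates, is the crux of the problem. The second route trades this for the problem of establishing term-by-term positivity of the Kazhdan--Letellier formula in full reductive generality, which appears to require new positivity input on the relevant generic characters and Green functions.
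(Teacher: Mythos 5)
Conjecture \ref{c:purity} is stated in the paper without proof; the authors explicitly flag in the introduction that ``lack of direct connection with quiver varieties makes proving this a nontrivial task.'' You correctly recognize that the statement is open, and your plan of attack is well-informed: the two routes you sketch (hyperk\"ahler/semiprojective model following Hausel--Letellier--Rodriguez-Villegas, versus a direct manifestly positive rewriting of the counting formula in the spirit of the Kac-conjecture proof) are the natural ones, and the obstacle you identify---the absence of a Crawley-Boevey quiver-variety bridge outside type $A$---is exactly what the paper points to. Your reading of the implication ``pure and Hodge--Tate $\Rightarrow$ $E(\bY)=|\bY(\Fq)|$ has non-negative coefficients'' is also correct.

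One caveat worth making explicit in your first route: in the degeneration $\mathcal{Y}\to\mathbb{A}^1$ over the rescaled orbits $s\cdot\cO_i$, purity of the central fibre does not on its own propagate to the general fibre. Indeed, the special fibre at $s=0$ is the $n=0$ additive character variety (the commuting variety quotient), which is a different variety with a genuinely different $E$-polynomial from $\bY$; in particular Theorem \ref{t:countY} (which assumes $n\geq 1$ and genericity) does not equate their Betti numbers. The mechanism one needs is rather to build a \emph{single} semiprojective space $\bY'$ together with a weight-preserving comparison to $\bY$ (for quiver varieties this comes from hyperk\"ahler rotation between level and stability parameters), and it is precisely this comparison that has no known substitute for a general reductive $G$. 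Your cautious phrasing (``one would try to propagate purity'') shows you are aware something is missing, but it is worth naming the missing piece: a reductive analogue of the level/stability rotation, or equivalently a $\bCt$-equivariant semiprojective resolution with cohomology matching that of $\bY$.
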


If $G=\GL_n$ then $Y$ is isomorphic to a quiver variety and the coefficients of $|Y(\Fq)|$ were recently shown to be the graded dimensions of the associated graded Borcherd algebra \cite{Lucien}, proving a conjecture of Schiffmann \cite{Schiffmann18}. It would be interesting to find a similar interpretation for the coefficients of $|\bY(\Fq)|$ for general reductive groups.

 \subsection{Structure of the text} 
 In \S \ref{s:prelim}, we gather some facts about reductive groups and  root systems. In \S \ref{s:geometry}, we discuss basic results about the geometry of generic character varieties. The main computation regarding points on character varieties appears in \S \ref{s:countX} and its topological implications are discussed in \S \ref{s:topology}. The additive analogues of these results appears in \S \ref{s:countY}. Finally, in \S \ref{s:Examples}, we give explicit examples of counting polynomials for groups of low rank.

\subsection{Acknowledgement} We would like to thank Ivan Cheltsov, Anand Deopurkar, Jack Hall, Lucien Hennecart, Ian Le, Emmanuel Letellier, Paul Levy, Sebastian Schlegel Mejia, Jean Michel, Travis Schedler, and Behrouz Taji for helpful conversations. 

MK was supported by an Australian Research Council (ARC) discovery grant. GH was supported by the National Research Foundation of Korea (NRF). BW and SG were supported by Australian postgraduate scholarships. We would like to thank the Sydney Mathematical Research Institute for facilitating a visit by Emmanuel Letellier,  giving further impetus to this project. 

The material of \S \ref{s:countX} (resp.\ \S \ref{s:countY}) forms a part of the Master's thesis of BW (resp.\ SG).

\section{Recollections on root systems and reductive groups} \label{s:prelim}
In this section, we recall basic facts and notations about root systems and reductive groups. The material here is standard, though perhaps the facts about pseudo-Levi subgroups and quasi-isolated semisimple elements are not as widely known. For these notions, we refer the reader to \cite{MS03}, \cite{Bonnafe05}, and \cite{Taylor22}.

\subsection{Root systems} 
Let $\Phi$ be a (reduced crystallographic) root system in a finite dimensional Euclidean vector space $(V, (.\ ,.))$.  We assume that $\Phi$ spans $V$ and fix a base $\Delta$ of $\Phi$. Then $\Delta$ is a basis of the vector space $V$. Let $W$ be the Weyl group generated by the simple reflections $s_\alpha$, $\alpha \in \Delta$. 

\subsubsection{Subsystems} A root subsystem of $\Phi$ is a subset $\Psi\subseteq \Phi$ which is itself a root system. This is equivalent to the requirement that for all $\alpha, \beta\in \Psi$,  $s_\alpha(\beta)\in \Psi$. A subsystem $\Psi\subseteq \Phi$ is \emph{closed} if 
\[
\alpha, \beta\in \Psi \quad \textrm{and}\quad  \alpha+\beta\in \Phi\quad \implies \quad \alpha+\beta\in \Psi. 
\]
 Given a subset $S\subseteq \Phi$,  let $\langle S\rangle_{\bZ}$ denote the subgroup of $V$ generated by $S$.  The subsystem of $\Phi$ generated by $S$ is defined by 
  \[
  \langle S \rangle:=\langle S\rangle_{\bZ} \cap \Phi. 
  \]
Note that $ \langle S \rangle$ is a closed subsystem of $\Phi$.

\subsubsection{Dual root systems}\label{s:dual} For each root $\alpha\in \Phi$, define the coroot 
\[
\check{\alpha}:=\frac{2}{(\alpha, \alpha)}\alpha\in V.
\]
 The set of coroots forms the dual root system $\check{\Phi}\subset V$. If $\Psi$ is a subsystem of $\Phi$, then $\check{\Psi}$ is a subsystem of $\check{\Phi}$. However, if $\Psi$ is closed in $\Phi$, then $\check{\Psi}$ need not be closed in $\check{\Phi}$. For instance, $C_n\times C_n$ is a closed subsystem of $C_{2n}$ but if $n>1$, then $B_n\times B_n$ is not closed in $B_{2n}$. Indeed, $B_n\times B_n$ does not arise in the Borel--de Siebenthal algorithm applied to $B_{2n}$.  

\subsubsection{Levi subsystems}
 A \emph{Levi subsystem} (aka parabolic subsystem)
   of $\Phi$ is a subsystem of the form $\Phi \cap E$ where $E\subseteq V$ is a subspace. 
  A Levi subsystem of $\Phi$ is closed in $\Phi$. Moreover, every Levi subsystem of $\Phi$ is of the form $w\cdot \langle S\rangle$, where $S$ is a subset of $\Delta$ and $w\in W$.  Note that $\Psi$ is a Levi subsystem of $\Phi$ if and only if $\check{\Psi}$ is a Levi subsystem of $\check{\Phi}$.

\subsubsection{Isolated subsystems}\label{s:Isolated}
  A subset $S\subseteq \Phi$ is called \emph{isolated in $\Phi$} if $S$ is not contained in a proper Levi subsystem of $\Phi$. The following are equivalent: 
\begin{enumerate} 
\item $S$ is isolated in $\Phi$. 
\item $\check{S}$ is isolated in $\check{\Phi}$. 
\item $S$ does not lie in a proper subspace of $V$. 
\item $\langle S\rangle_{\mathbb{Z}}\otimes \bbR =V$.
\item The root systems $\langle S\rangle$ and $\Phi$ have the same rank. 
\item $\displaystyle \bigcap_{\alpha \in S} \mathrm{ker}(\alpha)=0$. 
\end{enumerate}

\subsubsection{Pseudo-Levi subsystems} Assume $\Phi$ is irreducible. Let $\theta$ be the highest root of $\Phi$, and 
\[
\widetilde \Delta:=\Delta \sqcup \{-\theta\}.
\]
 A pseudo-Levi subsystem is a subsystem of $\Phi$ of the form $w\cdot \langle S\rangle $, where $w\in W$ and $S$ is a subset of $\widetilde \Delta$.  More generally, if $\Phi=\Phi_1\sqcup \cdots \sqcup \Phi_r$ with $\Phi_i$ irreducible, then a subsystem $\Psi\subseteq \Phi$ is said to be a pseudo-Levi if $\Psi_i:=\Psi\cap \Phi_i$ is a pseudo-Levi inside $\Phi_i$ for all $i$. It is clear that every Levi is a pseudo-Levi. The converse is true in type $A$ but false otherwise. For instance, $C_2$ has a pseudo-Levi $A_1\times A_1$ which is not a Levi. 

\subsubsection{Isolated pseudo-Levis} 
 Let $\Psi$ be a pseudo-Levi subsystem of an irreducible root system $\Phi$.  Then the following are equivalent: 
\begin{enumerate} 
\item $\Psi$ is isolated in $\Phi$. 
\item $\Psi=w\cdot \langle S\rangle $ where $S\subset \tilde{\Delta}$ has the same size as $\Delta$.  
\end{enumerate} 
In other words, up to $W$-conjugation, isolated pseudo-Levis are those which are obtained by removing just a single element of $\widetilde{\Delta}$ (equivalently, a single node from the extended Dynkin diagram). We leave it to the reader to generalise these statements to non-irreducible root systems. Figure \ref{table:iso} lists the isolated pseudo-Levi subsystems of irreducible root systems.

\begin{figure}[h]\centering
\begin{tabular}{|c|c|} 
\hline Type of $\Phi$ & Isolated pseudo-Levi subsystems of $\Phi$ \\
\hline 
$A_n$ $(n\geq 1)$ & $A_n$ only \\ \hline 
$B_n$ $(n\geq 3)$ & \makecell{$B_n$, \ $D_n$, \ $D_{n-1}\times A_1'$, \ $D_r\times B_{n-r}$ ($2\leq r\leq n-2$)} \\\hline 
$C_n$ $(n\geq 2)$ & \makecell{$C_n$, \ $C_r \times C_{n-r}$ ($1\leq r \leq \frac{n}{2}$)} \\\hline 
$D_n$ $(n\geq 4)$ & \makecell{$D_n$, \ $D_r \times D_{n-r}$ ($2\leq r \leq \frac{n}{2}$)} \\\hline 
$G_2$ & \makecell{$G_2$, \ $A_2$, \ $A_1\times A_1'$} \\\hline 
$F_4$ & \makecell{$F_4$, \ $B_4$, \ $C_3\times A_1$, \ $A_3\times A_1'$, \ $A_2\times A_2'$} \\\hline 
$E_6$ & \makecell{$E_6$, \ $A_5\times A_1$, \ $A_2\times A_2\times A_2$} \\\hline 
$E_7$ & \makecell{$E_7$, \ $A_7$, \ $D_6\times A_1$, \ $A_5\times A_2$, \ $A_3\times A_3\times A_1$} \\\hline 
$E_8$ & \makecell{$E_8$, \ $D_8$, \ $A_8$, \ $E_7\times A_1$, \ $A_7\times A_1$, \\ 
$E_6\times A_2$, \ $D_5\times A_3$, $A_5\times A_2\times A_1$, \ $A_4\times A_4$} \\\hline 
\end{tabular}
\caption{Isolated pseudo-Levi subsystems. \label{table:iso}}
\end{figure}

\subsubsection{Endoscopy subsystems} 
  Let $\check{\Psi}$ be a pseudo-Levi subsystem of $\check{\Phi}$. Then the dual root system $\Psi$ is called an \emph{endoscopy subsystem} of $\Phi$. For instance, $B_n\times B_n$ is an endoscopy subsystem of $B_{2n}$. Note that this subsystem is not closed.

\subsubsection{Very good primes} We say the prime $p$ is good for $\Phi$ if $p$ does not divide the coefficients of 
the highest root of $\Phi$. Otherwise $p$ is bad for $G$. Bad characteristics are $p = 2$ if the root system is of type $B_n$, $C_n$ or $D_n$, $p=2,3$  in type $G_2$, $F_4$, $E_6$, $E_7$ and $p=2,3,5$ in type $E_8$.   We say $p$ is very good for $\Phi$ if $p$ is good for $\Phi$ and $p$ does not divide $n+1$ for every  irreducible component of type $A_n$ of $\Phi$. Note that $p$ is (very) good for $\Phi$ if and only if it is (very) good for $\check{\Phi}$.

\subsection{Reductive groups} \label{s:Reductive} Let $G$ be a connected reductive group over an algebraically closed field $k$. Let $T$ be a maximal torus of $G$, $B$ a Borel containing $T$, $U$ the unipotent radical of $B$, $W$ the Weyl group, $(X,  \Phi, \check{X}, \check{\Phi})$ the root datum,  $\langle \Phi \rangle_\bZ$  root lattice, and $\langle \check{\Phi} \rangle_\bZ$  the coroot lattice. Let $\Delta\subseteq \Phi$ (resp. $\check{\Delta}\subseteq \check{\Phi}$) denote the base determined by $B$ and $V:=X\otimes \bbR$.

\subsubsection{Dual group} 
Let $\chT:=\mathrm{Spec}\, k[X]$ be the dual torus and $\chG$ the Langlands dual group of $G$ over $k$. In other words, $\chG$ is the connected reductive group over $k$ with maximal torus $\chT$ and root datum $(\check{X}, \check{\Phi}, X, \Phi)$.

\subsubsection{Levi subgroups} A Levi subgroup of $G$ is defined to be the centraliser $L=C_G(S)$ of a torus $S\subseteq G$. If $S\subseteq T$ (in other words, if $L$ contains $T$), then $L$ is said to be a \emph{standard} Levi subgroup. In this case, the root system of $L$ is a Levi subsystem of $\Phi$. Every Levi subsystem arises in this manner. 

If $x\in \fg$ is a semisimple element, then the connected centraliser $C_{G}(x)^\circ$ is a Levi subgroup of $G$. Moreover, every Levi subgroup of $G$ arises in this manner. Note that if the characteristic of $k$ is very good for $G$, then $C_G(x)$ is connected \cite[Theorem 3.14]{Steinberg75}. 

\subsubsection{Parabolic subgroups}   \label{s:parabolic} A connected subgroup $P\subseteq G$ is called parabolic if the quotient $G/P$ is proper. Every parabolic subgroup has a Levi decomposition $P=L\ltimes N$, where $N$ is the maximal closed connected normal unipotent subgroup of $P$ and $L$ is a Levi subgroup of $G$. Every Levi subgroup of $G$ arises in this manner.  

\subsubsection{} In what follows, we will be using the following facts: 
\begin{enumerate} 
\item 
If $s\in P$ is semisimple, then $s$ is $P$-conjugate to an element of $L$. Indeed, $s$ lies in some maximal torus $T'$ of $P$ and since all maximal tori of $P$ are $P$-conjugate, we can choose a $P$-conjugate of $T'$ which lies in $L$. 

\item If $l\in L$, then $l\in [L,L]$ if and only if $l\in [P,P]$. This follows from the fact that $[P,P]=[L,L]\ltimes N$. 
\end{enumerate}

\subsubsection{Pseudo-Levi subgroups}  Let $x\in G$ be a semisimple element. The connected centraliser $G_x^\circ=C_G(x)^\circ$ is called a pseudo-Levi subgroup of $G$.  The dual group is called an endoscopy group for $G$.  If $x\in T$, then $G_x^\circ$ is called a \emph{standard} pseudo-Levi subgroup of $G$ and its dual is called a \emph{standard} endoscopy group for $\check{G}$. 

\subsubsection{} If $x\in T$, then the root system of $G_x^\circ$ is given by 
\[
{\Phi}_x:=\{{\alpha} \in {\Phi} \, |\, {\alpha}(x)=1\}.
\]
This is a pseudo-Levi subsystem of $\Phi$. Moreover, every pseudo-Levi subsystem arises in this manner \cite{Deriziotis}. By a theorem of Steinberg, $\pi_0(G_x)$ is isomorphic to a subgroup of $\pi_1([G,G])$; thus, if $[G,G]$ is simply connected, then $G_x$ is connected \cite[\S 2]{Steinberg75}.

\subsubsection{Quasi-isolated elements} \label{s:isolated} A semisimple element $x\in G$ is called \emph{quasi-isolated} if $G_x$ is not in a proper Levi subgroup of $G$.  It is called isolated if $G_x^\circ$ is not in any proper Levi subgroup of $G$. Thus, $x\in T$ is isolated if and only if $\Phi_x$ is an isolated (pseudo-Levi) subsystem of $\Phi$. If $x$ is quasi-isolated, then $G_x$ is an irreducible subgroup of $G$; i.e., it does not belong to a proper parabolic. By Schur's lemma (cf.\ \cite[Proposition 15]{Sikora}) $Z(G_x)/Z(G)$ is finite.

\subsubsection{Pretty good primes}\label{sss:prettygood} Following \cite{Herpel}, we say a prime $p$ is pretty good for $G$ if for every subset $A\subseteq \Phi$, the finitely generated abelian groups $X/\bZ A $ and $\check{X}/\bZ \check{A}$ have no $p$-torsion.   

\begin{thm}[\cite{Herpel}]  \label{t:Herpel} 
\begin{enumerate} 
\item[(i)] If $p$ is very good for $G$, then it is pretty good for $G$. 
\item[(ii)] 
Suppose $\tilde{G}$ is a possibly disconnected reductive group with connected component $G$, the prime $p=\mathrm{char}(k)$ is pretty good for $G$, and $\pi_0(\widetilde{G})$ is \'etale. Then the centraliser $C_{\tilde{G}}(H)$ of every closed subgroup scheme $H\subseteq \tilde{G}$ is smooth. 
\end{enumerate} 
\end{thm}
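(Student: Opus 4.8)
The statement to prove is Theorem~\ref{t:Herpel}, attributed to Herpel. Let me sketch a proof plan.

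\medskip

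The plan is to prove the two parts separately, with part (i) being an essentially combinatorial check on root data and part (ii) the substantive one, proceeding via a reduction to the case of diagonalisable $H$ and then to the classical Steinberg smoothness criterion.

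\medskip

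\textbf{Part (i).} First I would recall that $p$ very good for $G$ means $p$ is good (does not divide any coefficient of the highest root in each irreducible factor) and $p\nmid n+1$ for each component of type $A_n$. The claim is that this forces, for every $A\subseteq\Phi$, the abelian groups $X/\bZ A$ and $\check X/\bZ\check A$ to be $p$-torsion-free. It suffices to treat $A$ a $\bZ$-closed subset, and then, decomposing along irreducible pieces, to reduce to $A$ itself a root subsystem; one then runs through the Borel--de Siebenthal classification of (closed) subsystems $\Psi\subseteq\Phi$ and computes the torsion of $X/\bZ\Psi$. The key point is that the torsion primes occurring this way for the full lattices are exactly the bad primes together with the primes dividing $n+1$ in type $A$ — i.e. exactly the primes excluded by "very good" — and passing to a possibly smaller lattice $X$ between the root lattice and the weight lattice can only introduce torsion already present for the weight lattice. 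I expect this to be a finite, case-by-case verification rather than a conceptual obstacle; it is standard (see Herpel, or the discussion of torsion primes in Steinberg/Springer--Steinberg).

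\medskip

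\textbf{Part (ii).} Here the strategy is: (a) reduce to $H$ a diagonalisable subgroup scheme, (b) reduce further to $H$ generated by a single semisimple element or a single torus times a finite diagonalisable group, (c) apply a Steinberg-type smoothness criterion for fixed-point schemes under a diagonalisable group. For (a): smoothness of $C_{\tilde G}(H)$ can be checked after base change to $\bar k$ and is a statement about the dimension of the Lie algebra centraliser $\fg^H=\mathrm{Lie}\,C_{\tilde G}(H)$ matching $\dim C_{\tilde G}(H)$; one uses that the centraliser of $H$ equals the centraliser of a maximal diagonalisable subgroup scheme of $H$ together with the centraliser of a unipotent part, but by pretty-goodness (via Theorem~\ref{t:Herpel} applied inductively to smaller groups, or by a direct argument) the reductive group $C_{\tilde G}(H^{\mathrm{diag}})$ is again pretty good, so it is enough to handle $H$ diagonalisable and then, inside a pretty-good reductive group, a unipotent element — the latter being smooth in very good characteristic by Springer's isomorphism / standard results, which pretty good also covers at the level needed. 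For (b)--(c): write a diagonalisable $H$ as an extension involving a torus $S$ and a finite group $\mu$ of multiplicative type; $C_{\tilde G}(S)$ is a Levi (smooth, and pretty good again since its root datum sits inside that of $G$), so we reduce to $H=\mu$ finite diagonalisable inside a pretty-good group. Then $C_{\tilde G}(\mu)$ is smooth iff $\dim\fg^\mu=\dim(\tilde G)^\mu$, and $\fg^\mu$, $(\tilde G)^\mu$ are controlled by the characters of $\mu$ that are trivial on relevant weights; the discrepancy between group and Lie algebra fixed points is exactly measured by $p$-torsion in quotients $X/\bZ A$ and $\check X/\bZ\check A$ for $A$ the sets of roots/weights killed by $\mu$. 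Pretty-goodness kills precisely this torsion, giving the dimension equality and hence smoothness.

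\medskip

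The main obstacle I anticipate is step (a)--(b) of part (ii): cleanly handling arbitrary (not necessarily smooth, not necessarily diagonalisable, possibly infinitesimal) closed subgroup schemes $H$ and the disconnected ambient group $\tilde G$ with \'etale $\pi_0$. One must be careful that "centraliser of a subgroup scheme" is the scheme-theoretic fixed locus, that forming it commutes with the reductions performed, and that at each reduction the smaller group inherits the pretty-good hypothesis (this is where one quietly reuses the classification-type input from part (i), now applied to sub-root-data). Once everything is reduced to a single diagonalisable group acting on a pretty-good reductive group, the dimension count via torsion in $X/\bZ A$ is the conceptual heart and is short; getting there rigorously in the stated scheme-theoretic generality is the delicate part. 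I would, at the relevant point, simply cite Herpel's paper for the detailed scheme-theoretic bookkeeping and present here only the reduction outline and the final torsion computation.
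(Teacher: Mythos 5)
The paper does not prove this result; it is stated with attribution to \cite{Herpel} and used as a black box. So there is no internal argument in the paper to compare your sketch against, and the "intended proof" in the paper's context is simply the citation. With that caveat, a few remarks on your reconstruction of Herpel's argument.

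Your outline of part (i) is sound: one reduces to $\bZ$-closed subsets (since $\bZ A$ depends only on the $\bZ$-span) and then runs a finite check over subsystems, noting that the primes creating torsion in $X/\bZ\Psi$ or $\check X/\bZ\check\Psi$ are exactly the bad primes together with the primes dividing $n+1$ in type $A_n$; this matches the content of Herpel's computation.

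For part (ii) there is a genuine gap in your step (a): an arbitrary closed subgroup scheme $H\subseteq\tilde G$ does not admit a Jordan decomposition into a maximal diagonalisable part and a unipotent part, so the reduction you propose (``centraliser of $H$ equals the centraliser of a maximal diagonalisable subgroup scheme of $H$ together with the centraliser of a unipotent part'') does not apply to a general $H$, which may be non-smooth, infinitesimal, or have no useful filtration of that kind. You do flag this as the main obstacle, but the flagged obstacle is in fact where your route breaks. Herpel's actual reduction is different in spirit: one does not decompose $H$ at all. Instead, one shows that smoothness of $C_G(H)$ for \emph{all} closed subschemes $H$ is equivalent to an infinitesimal separability condition which can be verified on individual $k$-points, and only then invokes Jordan decomposition of a single element $x=su$, splitting $C_G(x)=C_{C_G(s)}(u)$ with $C_G(s)$ a pseudo-Levi controlled by $p$-torsion in $X/\bZ\Phi_s$ and the unipotent part handled by Springer-isomorphism-type input. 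Your torsion computation at the end is the right conceptual heart, but the route to it should pass through a separability criterion for $k$-points rather than a subgroup-scheme Jordan decomposition. Since both you and the paper ultimately defer to Herpel for the scheme-theoretic bookkeeping, the net effect is the same, but the sketched reduction as written would not close.
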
 

\begin{cor} \label{c:etale}
If $p=\mathrm{char}(k)$ is very good for $G$, then for every $x\in T$, the centre $Z(G_x)$ is smooth. 
\end{cor}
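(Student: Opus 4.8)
The plan is to deduce Corollary~\ref{c:etale} directly from Theorem~\ref{t:Herpel}. The key observation is that $G_x = C_G(x)$ is itself a reductive group (being the centraliser of a semisimple element in a connected reductive group over an algebraically closed field), so $Z(G_x)$ is the centraliser in $G_x$ of all of $G_x$, and smoothness of $Z(G_x)$ will follow from part~(ii) of the theorem applied to $\tilde{G} = G_x$ once we know $p$ is pretty good for $G_x$. So the heart of the argument is to verify that a very good prime for $G$ remains pretty good for $G_x$.

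First I would recall that since $p = \mathrm{char}(k)$ is very good for $G$, Steinberg's theorem \cite[Theorem 3.14]{Steinberg75} (quoted in \S\ref{s:Reductive}) gives that $C_G(x)$ is connected, so $G_x = G_x^\circ$ is a connected reductive group; by the discussion in \S\ref{s:Reductive} it is in fact a pseudo-Levi subgroup, and for $x \in T$ its root system is the pseudo-Levi subsystem $\Phi_x = \{\alpha \in \Phi \mid \alpha(x) = 1\} \subseteq \Phi$, while its character lattice is still $X$ (it contains $T$). Next I would check the pretty-good condition for $G_x$: by definition I must show that for every subset $A \subseteq \Phi_x$, neither $X/\bZ A$ nor $\check{X}/\bZ\check{A}$ has $p$-torsion. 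But $\Phi_x \subseteq \Phi$, so every subset $A$ of $\Phi_x$ is also a subset of $\Phi$; since $p$ is pretty good for $G$ by Theorem~\ref{t:Herpel}(i), the groups $X/\bZ A$ and $\check{X}/\bZ\check{A}$ have no $p$-torsion for all $A \subseteq \Phi$, hence a fortiori for all $A \subseteq \Phi_x$. Thus $p$ is pretty good for $G_x$.

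Finally I would apply Theorem~\ref{t:Herpel}(ii) with $\tilde{G} = G_x$ (which is connected, hence trivially has étale $\pi_0$) and $H = G_x$ itself: the centraliser $C_{G_x}(G_x) = Z(G_x)$ is smooth. This completes the proof.

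I do not anticipate a serious obstacle here; the only point requiring a moment's care is confirming that the character and cocharacter lattices of the pseudo-Levi $G_x$ coincide with those of $G$ (so that the pretty-good condition for $G_x$ is literally a sub-collection of the conditions for $G$), which holds because $G_x$ contains the maximal torus $T$ of $G$. An alternative, should one wish to avoid invoking connectedness, would be to apply Theorem~\ref{t:Herpel}(ii) directly with $\tilde{G} = G$ and $H = \langle x \rangle$ the (diagonalisable, hence smooth) subgroup scheme generated by $x$, concluding that $C_G(x) = G_x$ is smooth, and then separately argue smoothness of its centre; but the route through $G_x$ as above is cleaner since it produces $Z(G_x)$ in one stroke.
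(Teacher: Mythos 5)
Your main route has a genuine gap. You claim that because $p$ is very good for $G$, Steinberg's theorem \cite[Theorem 3.14]{Steinberg75} shows $C_G(x)$ is connected for $x \in T$. But that theorem concerns centralisers of semisimple elements of the \emph{Lie algebra} $\fg$ (this is exactly what \S\ref{s:Reductive} says when it introduces Levi subgroups). For a semisimple element of the \emph{group}, the centraliser is a pseudo-Levi, and the relevant statement in \S\ref{s:Reductive} is the other one quoted from Steinberg: $\pi_0(G_x)$ embeds into $\pi_1([G,G])$. So $G_x$ is connected only when $[G,G]$ is simply connected, which is \emph{not} an assumption of this corollary ($Z(G)$ is not assumed connected here). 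A concrete counterexample to your claim: take $G = \PGL_2$ with $p$ odd (so $p$ is very good) and $x$ the image of $\diag(1,-1)$; then $C_G(x)$ is disconnected. So you cannot take $\tilde{G} = G_x$ with $\pi_0(\tilde{G})$ trivial.

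The correct argument, which is what the paper does and which your final ``alternative'' paragraph gestures at but does not carry out, handles disconnectedness as follows: since $p$ is pretty good for $G$, Theorem~\ref{t:Herpel}(ii) (with $\tilde{G}=G$ and $H$ the subgroup scheme generated by $x$) shows that $G_x=C_G(x)$ is \emph{smooth}; smoothness of the group scheme $G_x$ is what makes $\pi_0(G_x)$ \'etale. Your observation that $p$ is pretty good for $G_x^\circ$ (the lattice argument you give is fine, since $G_x^\circ\supseteq T$ so the character lattice is unchanged and $\Phi_x\subseteq\Phi$) then lets you apply Theorem~\ref{t:Herpel}(ii) a second time, now with $\tilde{G}=G_x$ (connected component $G_x^\circ$, \'etale component group) and $H=G_x$, to conclude $Z(G_x)=C_{G_x}(G_x)$ is smooth. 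In short: you need ``smooth, hence \'etale $\pi_0$'' rather than ``connected'', and that requires a first application of the Herpel theorem before the second.
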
 

\begin{proof} 
Indeed, $p$ being pretty good for $G$ implies that it is pretty good for $G_x^\circ$ and that $G_x=C_G(x)$ is smooth; thus, $\pi_0(G_x)$ is \'etale. By the previous theorem,  $Z(G_x)=C_{G_x}(G_x)$ is smooth. 
\end{proof}

\section{Generic character varieties} \label{s:geometry}
Let $G$ be a connected reductive group over an algebraically closed field $k$. In this section, we define the notion generic tuples of conjugacy classes of $G$ and study the corresponding character varieties. Most of this material appears in \cite{HLRV11} for $\GL_n$ and \cite{Boalch14} for general $G$ over complex numbers.  We work with reductive groups over arbitrary fields.

\subsection{Generic classes} \label{s:generic}  Let $C_1,\ldots,C_n$ be conjugacy classes of $G$. 

\begin{defe}
The tuple $C=(C_1,\ldots, C_n)$ is called \emph{generic} if whenever there exists a proper parabolic subgroup $P\subset G$ and $x_i\in P\cap C_i$,  for $i=1,\ldots,n$, then  $\prod_{i=1}^n x_i\notin [P,P]$. 
\end{defe} 

\subsubsection{Maximal parabolics} 
Clearly it is enough to check the above condition on maximal (proper) parabolic subgroups. This implies that if $G=\GL_n$, then being generic is equivalent to requiring that whenever a proper non-zero subspace $V\subset k^n$ is preserved by $x_i\in C_i$, $i=1,\ldots,n$, we have  $\prod_{i=1}^n \det(x_i|_V)\neq 1$. Thus, we recover the notion of generic tuples of \cite[\S 2]{HLRV11}.  

\subsubsection{Standard parabolics} It is enough to check the genericity condition for \emph{standard} (proper) parabolic  subgroups. (By definition, standard parabolic subgroups are those containing a chosen Borel subgroup $B\subset G$.) Indeed, suppose $P$ is an arbitrary parabolic and choose $g \in G$ such that the conjugate $g\cdot P$ is standard. Then 
\[
x_i\in P\cap C_i\qquad \iff \qquad g\cdot x_i\in g\cdot P\cap C_i
\]
 and  
 \[
 \prod_{i=1}^n x_i\in [P,P]\qquad \iff \qquad g\cdot \prod_{i=1}^n x_i\in g\cdot [P,P]=[g\cdot P, g\cdot P].
 \] 

\subsubsection{Semisimple part}  For each conjugacy class $X$,  write $X^s$ for the corresponding semisimple class, obtained by taking the semisimple parts of the elements of $X$. 
Then one can show that $C=(C_1,\ldots,C_n)$ is generic if and only if $C^s=(C_1^s, \ldots, C_n^s)$ is generic. This is the reductive analogue of the fact that in $\GL_n$, being generic depends only on the eigenvalues.

\subsubsection{Levis instead of parabolics} For semisimple conjugacy classes, one can check genericity using  Levi subgroups instead of parabolics. More precisely,  it follows from discussions of \S \ref{s:parabolic} that a tuple of semisimple conjugacy classes  $C=(C_1,\ldots,C_n)$ is generic if and only if whenever there exists a proper standard Levi subgroup $L\subset G$ and $s_i\in L\cap C_i$,  for $i=1,\ldots,n$, then  $\prod_{i=1}^n s_i\notin [L,L]$.

\subsubsection{Generic elements} 
We now explain that deciding whether a tuple of (semisimple) conjugacy classes is generic involves checking finitely many conditions. To this end, we need a definition. 

\begin{defe} A tuple $S=(S_1,\ldots,S_n)\in T^n$ is said to be generic if for every $(w_1,\ldots, w_n) \in W^n$ and every proper standard Levi subgroup $L\subset G$, we have 
\[
\prod_{i=1}^n w_i\cdot S_i \notin T\cap [L,L]. 
\] 
\end{defe} 

Let $C_i:=G\cdot  S_i$ be the conjugacy class corresponding to $S_i$. 

\begin{lem}  The element $S=(S_1,\ldots,S_n)\in T^n$ is generic if and only if the tuple of conjugacy classes $C=(C_1,\ldots,C_n)$ is generic. 
\end{lem}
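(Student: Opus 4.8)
The plan is to prove both implications by relating the two genericity conditions through conjugation and the structure of standard Levi subgroups. Throughout I will use the characterisation from \S\ref{s:parabolic} and \S\ref{s:generic} that for semisimple classes, genericity of $C$ can be checked on proper standard Levi subgroups, together with fact (1) of \S\ref{s:parabolic}: a semisimple element of a standard Levi $L$ containing $T$ is $L$-conjugate into $T$, and in fact (since $W(L)$ acts on $T$) its $L$-conjugates meeting $T$ form a single $W(L)$-orbit, which is contained in the $W$-orbit of $S_i$.

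First I would prove that if $C=(C_1,\dots,C_n)$ is generic then $S=(S_1,\dots,S_n)$ is generic. Suppose not: there is $w=(w_1,\dots,w_n)\in W^n$ and a proper standard Levi $L$ with $\prod_i w_i.S_i \in T\cap[L,L]$. Since each $w_i.S_i$ is $G$-conjugate to $S_i$, it lies in $C_i$; and it lies in $T\subseteq L$, hence in $L\cap C_i$. Thus we have produced $s_i := w_i.S_i \in L\cap C_i$ with $\prod_i s_i\in T\cap[L,L]\subseteq[L,L]$, contradicting genericity of $C$ via the Levi reformulation of \S\ref{s:generic}. This direction is essentially immediate.

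The converse — $S$ generic implies $C$ generic — is the substantive direction. Assume $C$ is not generic; by the Levi reformulation there is a proper standard Levi $L$ and elements $s_i\in L\cap C_i$ with $\prod_i s_i\in[L,L]$. Each $s_i$ is a semisimple element of $L$, so by \S\ref{s:parabolic}(1) it is $L$-conjugate to some $t_i\in T$; write $s_i = g_i t_i g_i^{-1}$ with $g_i\in L$. Now $t_i\in T\cap C_i$, and since $S_i\in T$ also lies in $C_i$, the two are $G$-conjugate, hence (maximal tori being conjugate, normalisers controlling this) $W$-conjugate: $t_i = w_i.S_i$ for some $w_i\in W$. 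Conjugating the relation $\prod_i s_i\in[L,L]$: the elements $s_i$ and $t_i$ are $L$-conjugate, so I need that $\prod_i s_i\in[L,L]$ forces $\prod_i t_i\in T\cap[L,L]$. Here is the key point requiring care: the conjugating elements $g_i$ differ, so $\prod_i s_i$ and $\prod_i t_i$ need not be conjugate. I would resolve this using \S\ref{s:parabolic}(2): membership in $[L,L]$ for an element of $L$ can be detected after applying the quotient $L\to L/[L,L]$, an abelian group (a torus) on which conjugation is trivial; since each $t_i$ maps to the same element of $L/[L,L]$ as $s_i$, the product $\prod_i t_i$ maps to the same element as $\prod_i s_i$, namely the identity. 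Hence $\prod_i t_i\in[L,L]$, and since all $t_i\in T$ this gives $\prod_i w_i.S_i = \prod_i t_i\in T\cap[L,L]$, contradicting genericity of $S$.

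The main obstacle, as indicated, is the bookkeeping in the converse: one must not assume the $s_i$ are simultaneously conjugated into $T$, and the clean way around this is to pass to the abelianisation $L/[L,L]$ rather than attempting a direct conjugation argument. A secondary technical point is justifying that $t_i\in T\cap C_i$ implies $t_i$ is $W$-conjugate to $S_i$ — this follows because $t_i$ and $S_i$ are both semisimple, $G$-conjugate, and lie in the common maximal torus $T$, so they are conjugate under $N_G(T)/T = W$. Once these two points are in place the proof is a short chain of reductions using only material already established in \S\ref{s:parabolic} and \S\ref{s:generic}.
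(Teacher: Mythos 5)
Your proof is correct and follows essentially the same route as the paper's: reduce to standard Levi subgroups, conjugate each $s_i$ inside $L$ into $T$, use that $G$-conjugate elements of $T$ are $W$-conjugate, and invoke genericity of $S$. The one place where you add something is in making explicit the step the paper leaves implicit — that $\prod_i s_i\in[L,L]$ iff $\prod_i t_i\in[L,L]$ even though the conjugating elements $g_i$ differ — by projecting to the abelian quotient $L/[L,L]$; this is the right justification and worth spelling out.
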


\begin{proof} 
It is clear that if $C$ is generic then so is $S$. For the converse, let $x=(x_1,\ldots,x_n) \in C_1\times\ldots \times C_n$. Let $L\subset G$ be a proper standard Levi subgroup containing $x_1,\ldots,x_n$. We need to show $\prod_{i=1}^n x_i\notin [L,L]$. We have $x_i=l_i \cdot  t_i$ for some $l_i\in L$ and $t_i\in T$. Now $t_i$ and $S_i$ are elements of $T$ which are $G$-conjugate; thus, they must actually be conjugate under $W$; i.e., there exists $w_i\in W$ such that $t_i=w_i \cdot  S_i$, cf.\ proof of Proposition 3.7.1 in \cite{Carter93}. By assumption, $t_1\cdots t_n\notin [L,L]$ which implies that $x_1\cdots x_n\notin [L,L]$. Thus, $C$ is generic.
\end{proof}

\subsubsection{Existence} The above lemma shows that generic conjugacy classes exist. Indeed, if $L$ is a proper standard levi subgroup, then $T\cap [L,L]$ is a proper closed subvariety of $T$. (At the level of Lie algebras, $\ft=\ft\cap [\fl,\fl]\oplus Z(\fl)$ for $\ft=\mathrm{Lie}(T)$ and the centre $Z(\fl)$ is non-trivial if $\fl$ is a proper Levi subalgebra of $\fg$.) Since there are only finitely many standard Levis $L$ and finitely many $w$'s, we conclude that the generic locus of $T^n$ is open dense.

\subsubsection{} \label{s:existence} Let  $\mathbf{T}_n \subseteq T^n$ denote the subvariety consisting of those tuples $(S_1,\ldots,S_n)\in T^n$ satisfying $\prod_{i=1}^n S_i \in [G,G]$. Then the above argument also shows that the generic locus of $\mathbf{T}_n$ is open dense. Since the (strongly) regular locus is also open dense in $T^n$, it follows that conjugacy classes satisfying Assumption \ref{a:mainAss} exist.

\subsection{Generic character varieties} \label{s:genericVar} In this section, we assume that $p=\mathrm{char}(k)$ is a very good prime for $G$ and $Z=Z(G)$ is connected.  
Let $C=(C_1,\ldots, C_n)$ be a generic tuple of conjugacy classes of $G$ satisfying $\prod_{i=1}^n C_i \subseteq [G,G]$. For ease of notation, let $M\colon G^{2g}\times C_1\times \cdots \times C_n \ra [G,G]$ be the map
\[
M(a_1,b_1,\ldots, a_g, b_g, c_1,\ldots, c_n) := [a_1,b_1]\cdots [a_g, b_g]c_1\cdots c_n.
  \]
The representation variety associated to $(\Gamma_{g,n}, G, C)$  is defined by $\bR:=M^{-1}(1)$. This is an affine scheme of finite type over $k$.  We assume that $R$ is non-empty (this is guaranteed if $g>0$).

\begin{thm} Every element of $\bR$ is irreducible and stable and the action of $G/Z$ on $R$ has finite \'etale stabilisers. Moreover, 
 $\bR$ is smooth and equidimensional with
\[
\dim(\bR) = 2g\dim(G)+\sum_{i=1}^n \dim(C_i) - \dim([G,G]). 
\]
\end{thm}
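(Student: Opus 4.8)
The plan is to exploit the genericity hypothesis to show that $G/Z$ acts on $\bR$ with finite étale stabilisers, and then to deduce smoothness and the dimension count from a transversality argument applied to the multiplication map $M$. First I would address the stabiliser claim: given $\rho=(a_i,b_i,c_j)\in\bR$, let $H$ be the closed subgroup of $G$ generated by all the $a_i,b_i,c_j$, so that the stabiliser of $\rho$ in $G$ is $C_G(H)$. If $\rho$ fails to be irreducible, then $H$ lies in a proper parabolic $P=L\ltimes N$; by \S\ref{s:parabolic}(1) the semisimple parts of the $c_j$ are $P$-conjugate into $L$, and using \S\ref{s:parabolic}(2) together with the fact that $\prod[a_i,b_i]\in[P,P]$ always, the relation $\prod[a_i,b_i]\prod c_j=1$ forces $\prod c_j^s\in[P,P]\cap C_j^s$-representatives, contradicting genericity (after conjugating $P$ to be standard and passing to $C^s$ as in \S\ref{s:generic}). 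Hence every $\rho$ is irreducible, so $C_G(H)$ is contained in no proper parabolic; by the quasi-isolated discussion in \S\ref{s:isolated} (Schur's lemma, \cite{Sikora}), $C_G(H)/Z$ is finite. Since $\mathrm{char}(k)$ is very good, Corollary \ref{c:etale} (or rather its proof via Theorem \ref{t:Herpel}) gives that $C_{\tilde G}(H)$ is smooth for the relevant $H$, so the stabiliser $C_G(H)$ is smooth, hence $C_G(H)/Z$ is finite and étale. Stability in the GIT sense then follows from the finiteness of stabilisers together with the properness of orbit maps for irreducible representations (closedness of orbits is standard once stabilisers are finite and the action factors through $G/Z$).

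Next I would prove smoothness of $\bR$ by showing that $1$ is a regular value of $M$ on the relevant locus. The derivative of $M$ at $\rho$ is the standard expression in terms of the group-cohomology differential: identifying $T_1[G,G]=[\fg,\fg]=\fg$ (very good characteristic), the image of $dM_\rho$ is the space of coboundaries, and its cokernel is $H^2(\Gamma,\fg_{\mathrm{Ad}\rho})$, which by Poincaré duality for the surface group is dual to $H^0(\Gamma,\fg_{\mathrm{Ad}\rho})=\fg^{\mathrm{Ad}\rho}=\mathrm{Lie}\,C_G(H)$. We have just shown $C_G(H)/Z$ is finite, so $\mathrm{Lie}\,C_G(H)=\mathrm{Lie}\,Z=\mathfrak z$. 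Thus $dM_\rho$ is not quite surjective, but its image has codimension $\dim\mathfrak z=\dim Z$; since $\prod C_i\subseteq[G,G]$ the map $M$ lands in $[G,G]$ and a refined count shows the relevant obstruction group is the $2$-cohomology valued in $[\fg,\fg]$ with the centre removed, so $M$ is submersive onto a suitable quotient and $\bR=M^{-1}(1)$ is smooth of the expected codimension. Carrying out this bookkeeping correctly — i.e.\ getting the centre contributions on both the $H^0$ and $H^2$ sides to match so that the codimension is exactly $\dim([G,G])$ rather than $\dim(G)$ — is the step I expect to be the main obstacle, and it is precisely where connectedness of $Z$ and very-goodness of $p$ (ensuring $\fg=\mathfrak z\oplus[\fg,\fg]$ as $W$-modules and that $C_G(H)$ is smooth with the right Lie algebra) get used.

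Finally, equidimensionality and the dimension formula follow formally: on the irreducible (= stable) locus $M$ has locally constant fibre dimension, and since $\dim(G^{2g}\times\prod C_i)=2g\dim(G)+\sum\dim(C_i)$ while the fibres have codimension $\dim([G,G])$, we get
\[
\dim(\bR)=2g\dim(G)+\sum_{i=1}^n\dim(C_i)-\dim([G,G]).
\]
Smoothness plus connectedness of the surface-group relations (a standard deformation argument, or reduction to the known $\GL_n$ case) gives that every component has this dimension, so $\bR$ is equidimensional. I would organise the write-up as: (i) irreducibility via genericity; (ii) finiteness and étaleness of stabilisers via Corollary \ref{c:etale}; (iii) the tangent-space/$H^2$ computation for smoothness and the codimension; (iv) the dimension count and equidimensionality.
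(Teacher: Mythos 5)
Your proposal follows the paper's proof in all essentials: genericity implies irreducibility, Schur's lemma plus Theorem \ref{t:Herpel} give finite \'etale stabilisers, Mumford's \cite[Proposition 0.8]{Mumford} gives stability, and smoothness plus the dimension formula come from showing that $1$ is a regular value of $M$ viewed as a map to $[G,G]$, via the $H^2$/Poincar\'e-duality computation (which the paper delegates to \cite[\S 2]{KNP23}). Two of your steps are more convoluted than necessary. For irreducibility there is no reason to pass to semisimple parts: if $P$ is a proper parabolic containing all the $a_i,b_i,c_i$, then $\prod[a_i,b_i]\in[P,P]$, so the relation forces $\prod c_i\in[P,P]$ with each $c_i\in P\cap C_i$, which contradicts the definition of generic directly. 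For the tangent-space argument, the right statement is simply that $dM_\rho$ is surjective onto $[\fg,\fg]$: in very good characteristic $\fg=\mathfrak z\oplus[\fg,\fg]$ where $\mathfrak z=\mathrm{Lie}\,Z$, and the cokernel of $dM_\rho$ with target $\fg$ is dual to $\fg^{\mathrm{Ad}\,\rho}=\mathfrak z$ by the finite-stabiliser step, so nothing is lost when the target is cut down to $[\fg,\fg]$ --- there is no ``submersion onto a suitable quotient'' to set up. Finally, equidimensionality is automatic once $1$ is a regular value of a map between smooth varieties; the deformation argument or $\GL_n$ reduction you mention is not needed.
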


\begin{proof}  Let ${r=(a_1,b_1,\ldots, a_g, b_g, c_1,\ldots, c_n)\in \bR}$ and suppose $P$ is a parabolic subgroup of $G$ containing all $a_i, b_i,$ and $ c_i$'s. Then, $[P,P]$ contains $[a_1,b_1]\cdots [a_g, b_g]$; thus, $[P,P]$ also contains $\prod_{i=1}^n c_i$. As $C$ is generic, it follows that $P=G$. Thus, $r$ is irreducible and Schur's lemma (cf.\ \cite[\S 4]{Sikora}) implies that $\mathrm{Stab}_{G/Z}(r)$ is finite.  Note that this stabiliser is the same as the centraliser $C_G(H)$, where $H:=\langle a_1,b_1,\ldots, a_g, b_g, c_1,\ldots, c_n\rangle\subseteq G$. By Theorem \ref{t:Herpel}, this group is smooth. Since we already have shown that it is finite, it follows that it is finite and \'etale.

Now stability follows from the fact that whenever a connected reductive group acts on an affine scheme of finite type with finite stabilisers, then every point is stable, cf.\ \cite[Proposition 0.8]{Mumford}. (Alternatively, one can use the Hilbert--Mumford criterion as in \cite{Boalch14}.) The statements about $\bR$ follow from the fact that $1$ is a regular value for $M$, see \cite[\S 2]{KNP25} for details.  
\end{proof} 

\subsubsection{} 
Next, let $\bX:=\bR/\!\!/G$ and $[\bX]:=[\bR/(G/Z)]$ denote, respectively, the (generic) character variety and character stack associated to $(\Gamma_{g,n}, G, C)$. The previous theorem implies: 

\begin{prop} 
$[\bX]$ is a smooth Deligne-Mumford stack with coarse moduli space $\bX$.
\end{prop}

\subsection{Frobenius Mass Formula} 
We now work over $k=\overline{\mathbb{F}_q}$ and assume that $G$ is equipped with an untwisted Frobenius automorphism $F: G\ra G$. 
We also assume that the conjugacy classes $C=(C_1,\ldots,C_n)$ are $F$-stable. Then the action of the Frobenius $F$ on $G$ and $C_i$'s, gives an action of $F$ on $\bR$, $\bX$ and $[\bX]$. We can therefore speak about the Frobenius fixed points $\bR^F$, $\bX^F$, and $[\bX]^F$. The first two are finite sets while the last one is a finite groupoid. 
By definition, 
\[
\bR^F=\Big\{(a_1,\ldots, b_g, c_1,\ldots,c_n)\in (G^F)^{2g}\times \prod_{i=1}^n C_i^F \, \Big|\, M(a_1,\ldots,b_g, c_1,\ldots,c_n)=1\Big\}.
\]

\subsubsection{Stable conjugacy} 

\begin{defe} A conjugacy class $C\subseteq G$ is called \emph{stable} if the stabiliser in $G$ of each element of $C$ is connected. 
\end{defe} 

If $C$ is stable, then  $C^F$ is a unique $G^F$-conjugacy class, cf.\ \cite[\S2.7.1]{GM20}. Thus, it makes sense to evaluate irreducible complex characters of $G^F$ on $C_i^F$.

\subsubsection{Frobenius Mass Formula} 
The following result goes back to the work of Frobenius; see e.g.  \cite[\S 3]{HLRV11} for a proof: 

\begin{thm} \label{t:Frob} If $C_i$'s are stable, then 
\[
{|\bR^F|} = |G^F|\sum_{\chi\in \Irr(G^F)} \left(\frac{|G^F|}{\chi(1)}\right)^{2g-2} \prod_{i=1}^n\frac{\chi(C_i^F)|C_i^F|}{\chi(1)}.
\] 
\end{thm}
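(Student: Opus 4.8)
The plan is to count the fibre of the multiplication map $M$ over $1$ using character theory of the finite group $G^F$. The engine is the basic orthogonality-type identity that expresses the number of ways to write $1$ as a product of a prescribed number of commutators and prescribed conjugacy classes. Concretely, for a finite group $\mathsf{G}$ and conjugacy classes $D_1,\dots,D_n$ of $\mathsf{G}$, the number of tuples $(a_1,b_1,\dots,a_g,b_g,c_1,\dots,c_n)\in \mathsf{G}^{2g}\times\prod_i D_i$ with $\prod_i[a_i,b_i]\prod_i c_i=1$ is
\[
|\mathsf{G}|^{2g-1}\,\prod_{i=1}^n |D_i|\,\sum_{\chi\in\Irr(\mathsf{G})}\frac{1}{\chi(1)^{2g-2+n}}\,\prod_{i=1}^n \frac{\chi(d_i)}{1},
\]
where $d_i\in D_i$; matching this with the stated formula after writing $\chi(C_i^F)=\chi(d_i)$ for $d_i\in C_i^F$ (which is well-defined precisely because the centralisers are connected, so $C_i^F$ is a single $G^F$-class) and $|C_i^F|=\chi(1)\chi(C_i^F)^{-1}\cdot(\text{nothing})$—one simply regroups $|\mathsf{G}|^{2g-1}=|\mathsf{G}|\cdot(|\mathsf{G}|)^{2g-2}$ and $\prod_i|D_i|=\prod_i \chi(1)\cdot \frac{|D_i|}{\chi(1)}$ only formally, so it is cleanest to derive the identity in exactly the shape printed.

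First I would establish the two standard building blocks. (1) For a single class $D$ with chosen element $d$, and any class function $f$, the ``class-sum acts as a scalar'' fact gives $\sum_{x\in D} f(x)$ controlled by $\sum_\chi \frac{|D|\chi(d)}{\chi(1)}\langle f,\chi\rangle$; equivalently the indicator of $D$ expands as $\mathbf{1}_D = \frac{|D|}{|\mathsf{G}|}\sum_\chi \overline{\chi(d)}\,\chi$ wait—better to phrase it via: the number of $x\in D$ with $xz=g_0$ for fixed $g_0$ equals $\frac{|D|}{|\mathsf{G}|}\sum_\chi \chi(d)\overline{\chi(g_0)}/\chi(1)$ up to the usual conjugation. (2) The commutator count: $\#\{(a,b)\in\mathsf{G}^2 : [a,b]=g_0\} = |\mathsf{G}|\sum_\chi \frac{\overline{\chi(g_0)}}{\chi(1)}$, which follows from the column orthogonality relations applied to the convolution of the ``commutator distribution''. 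Then I would convolve: the distribution of $\prod_{i=1}^g[a_i,b_i]$ is the $g$-fold convolution of the commutator distribution, and the distribution of $c_1\cdots c_n$ is the convolution of the class-sum distributions. Because convolution of class functions is diagonalised by $\Irr(\mathsf{G})$ with eigenvalue $|\mathsf{G}|/\chi(1)$ on the $\chi$-isotypic part, the Fourier coefficient of the total distribution at $\chi$ is a product of the individual coefficients, and evaluating the inverse transform at $g_0=1$ collapses everything to the single sum over $\Irr(\mathsf{G})$ with the exponent $2g-2+n$ appearing as $-(2g-2)$ from the commutators (contributing $(|\mathsf{G}|/\chi(1))^{2g}$ and a normalisation $|\mathsf{G}|^{-2g}$... here I would be careful with the exact powers) and $-n$... wait, I would keep precise track so that the final exponent on $|\mathsf{G}^F|/\chi(1)$ is $2g-2$ and each puncture contributes the extra factor $\chi(C_i^F)|C_i^F|/\chi(1)$.

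Applying this with $\mathsf{G}=G^F$ and $D_i=C_i^F$ (a single $G^F$-class by the connected-centraliser hypothesis, as noted via \cite[\S2.6.1]{GM20}) yields exactly the claimed formula for $|\bR^F|$. The only genuinely substantive point—as opposed to bookkeeping—is the diagonalisation of convolution by the irreducible characters together with the correct power of $\chi(1)$: this is where the exponent $2g-2+n$ is born, and getting the normalising powers of $|G^F|$ right (so that the prefactor is $|G^F|$ and not $|G^F|^2$ or $|G^F|^0$) is the step most prone to error. Since the statement only asks to recall a classical result, I would either cite \cite[\S3]{HLRV11} for the full derivation or present the convolution computation in a few lines; the hard part, such as it is, is purely the arithmetic of tracking these powers, so in a proof I would carry out the Fourier/convolution argument carefully once and then read off the exponents.
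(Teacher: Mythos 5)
Your proposal is correct and takes essentially the same approach as the paper, which gives no proof of its own but cites \cite[\S 3]{HLRV11} for this classical convolution/character-theoretic argument; your sketch (commutator count $\#\{(a,b):[a,b]=g\}=|\mathsf{G}|\sum_\chi\chi(g)/\chi(1)$, class sums acting by scalars, Fourier diagonalisation of convolution, evaluation at $g=1$) is precisely the derivation found there, and your opening formula $|\mathsf{G}|^{2g-1}\prod_i|D_i|\sum_\chi\prod_i\chi(d_i)/\chi(1)^{2g-2+n}$ rearranges to the stated identity. The one point you flag correctly as non-bookkeeping — that $C_i^F$ is a single $G^F$-class because of the connected-centraliser hypothesis — is exactly the hypothesis the paper records before the theorem.
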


\subsubsection{Points of the character stack} An application of Lang's theorem implies that 
\[
|[\bX]^F|=\frac{|\bR^F|}{|(G/Z)^F|},
\]
cf.\ \cite[Lemma 3.5.6]{Behrend91}. 
Thus, the Frobenius Mass Formula also provides an expression for the number of points of the character stack.

\subsubsection{Points of character variety} 
We now assume that the tuple $(C_1,\ldots,C_n)$ is generic. We saw above that $\bX$ is a coarse moduli space for the smooth Deligne-Mumford stack $[\bX]$. This implies that $[\bX]$ and $X$ have the same number of points over finite fields, cf.\ \cite[\S 2]{Behrend91} or \cite{BogaartEdixhoven}. Thus, we conclude:

\begin{cor} If $C_i$'s are generic and stable, then  
\[
|\bX^F|=|[\bX]^F| = |Z^F|\sum_{\chi\in \Irr(G^F)} \left(\frac{|G^F|}{\chi(1)}\right)^{2g-2} \prod_{i=1}^n\frac{\chi(C_i^F)|C_i^F|}{\chi(1)}.
\]
\end{cor}

\subsection{Additive character varieties} \label{ss:additive} 
The above considerations have additive analogues which we leave to the reader to formulate. The only non-trivial modification is the additive analogue of the Frobenius Mass Formula, which we now discuss. 

\subsubsection{} Let $\cO_1,\ldots,\cO_n$ be $F$-stable adjoint $G$-orbits in $\fg$. Let $A$ denote the corresponding additive representation variety. For ease of notation, let $m\colon \fg^{2g}\times \cO_1\times \cdots \times \cO_n\rightarrow [\fg,\fg]$ be the map
\[
m(a_1,b_1,\ldots,a_g, b_g, c_1,\ldots,c_n):=\sum_{i=1}^g [a_i,b_i] +\sum_{i=1}^n c_i. 
\]
Then, we have
\[
\bA^F=\Big\{(a_1,\ldots, b_g, c_1,\ldots,c_n)\in (\fg^F)^{2g}\times \prod_{i=1}^n \cO_i^F\, \Big|\, m(a_1,\ldots,b_g, c_1,\ldots,c_n)=0\Big\}.
\]

 \subsubsection{} Let $1_{\cO_i}^G$ denote the characteristic function of the $G^F$-orbit $\cO_i^F$. Let $ \mathcal{F}$ denote the Fourier transform on the space of $G^F$-invariant functions on $\fg^F$ (see \S \ref{s:invariant} for the definition).  For each $x\in \fg$, let $\fg_x$ denote its centraliser in $\fg$.  Then we have (cf. \cite[\S 3]{HLRV11}):

\begin{thm}[Additive Frobenius Mass Formula] \label{t:additiveFrob} If $\cO_i$'s are stable, then 
\[
{|\bA^F|} = |\fg^F|^{g-1} \sum_{x\in \fg^F} |\fg_x^F|^g \prod_{i=1}^n \mathcal{F}(1_{\cO_i}^G)(x).
\] 
\end{thm}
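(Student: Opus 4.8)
The plan is to mimic the classical derivation of the Frobenius Mass Formula, replacing the orthogonality of irreducible characters of the finite group $G^F$ by Fourier inversion on the finite abelian group $\fg^F$. Fix a nontrivial additive character $\psi:\F_q\to\bCt$ and, using an $F$-stable $G$-invariant nondegenerate symmetric bilinear form on $\fg$ (which exists in very good characteristic), define the Fourier transform $\mathcal F(f)(x)=|\fg^F|^{-1}\sum_{y\in\fg^F}\psi(\langle x,y\rangle)f(y)$ on functions on $\fg^F$; all the characteristic functions involved are $G^F$-invariant since the form is $G$-invariant, so the transform preserves the space of invariant functions. I would first record the basic Fourier-analytic identity to be used: for any $z\in\fg^F$,
\[
\frac{1}{|\fg^F|}\sum_{\xi\in\fg^F}\psi(\langle z,\xi\rangle)=\begin{cases}1 & z=0,\\ 0 & z\neq 0,\end{cases}
\]
which is the additive replacement for $\frac{1}{|G^F|}\sum_{\chi}\chi(1)\chi(g)=\delta_{g,1}$ in the multiplicative case, together with the Plancherel-type pairing $\sum_{x}\mathcal F(f_1)(x)\overline{\mathcal F(f_2)(x)}=|\fg^F|^{-1}\sum_{y}f_1(y)\overline{f_2(y)}$.

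Next I would rewrite $|\bA^F|$ as a sum over $\fg^{2g}\times\prod_i\cO_i^F$ of the indicator of the equation $m=0$, and expand that indicator via the identity above:
\[
|\bA^F|=\frac{1}{|\fg^F|}\sum_{\xi\in\fg^F}\ \sum_{(a_i,b_i)}\psi\Big(\Big\langle\xi,\sum_i[a_i,b_i]\Big\rangle\Big)\ \prod_{i=1}^n\Big(\sum_{c_i\in\cO_i^F}\psi(\langle\xi,c_i\rangle)\Big).
\]
The product over punctures is, by definition, $\prod_i |\fg^F|\,\mathcal F(1_{\cO_i}^G)(\xi)$ up to the normalisation convention; so the work is to evaluate the "genus part" $\sum_{(a_i,b_i)\in(\fg^F)^{2g}}\psi(\langle\xi,\sum_i[a_i,b_i]\rangle)$. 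Using invariance of the form, $\langle\xi,[a,b]\rangle=\langle[\xi,a],b\rangle$, so summing over $b\in\fg^F$ first gives $|\fg^F|$ if $[\xi,a]=0$ and $0$ otherwise; hence each commutator factor contributes $|\fg^F|\cdot|\{a\in\fg^F:[\xi,a]=0\}|=|\fg^F|\,|\fg_\xi^F|$. Therefore the genus part equals $(|\fg^F|\,|\fg_\xi^F|)^{g}$, and assembling everything,
\[
|\bA^F|=\frac{1}{|\fg^F|}\sum_{\xi\in\fg^F}\big(|\fg^F|\,|\fg_\xi^F|\big)^g\,\prod_{i=1}^n\big(|\fg^F|\,\mathcal F(1_{\cO_i}^G)(\xi)\big),
\]
and then one collects the powers of $|\fg^F|$ to obtain $|\fg^F|^{g-1}$ after folding the $n$ factors of $|\fg^F|$ from the punctures into the normalisation of $\mathcal F$ as defined in \S\ref{s:invariant}; with that convention the claimed formula $|\bA^F|=|\fg^F|^{g-1}\sum_{x\in\fg^F}|\fg_x^F|^g\prod_{i=1}^n\mathcal F(1_{\cO_i}^G)(x)$ falls out.

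The only genuinely delicate points are bookkeeping rather than conceptual. First, one must use the precise normalisation of $\mathcal F$ from \S\ref{s:invariant} so that the powers of $|\fg^F|$ match exactly; I expect the main place to be careful is reconciling the $n+2g$ stray factors of $|\fg^F|$ produced by the expansion with the exponent $g-1$ in the statement, which works out precisely because each puncture's Fourier transform absorbs one factor and the genus part produces $g$ factors from the $b_i$-sums against $|\fg^F|^{-1}$ out front. Second, one needs $\cO_i^F$ to be a single $G^F$-orbit so that $1_{\cO_i}^G$ is well defined — this is exactly the connectedness-of-stabiliser hypothesis stated before the theorem, guaranteed in very good characteristic by \cite{Steinberg75}. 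Finally, the existence of the $F$-stable invariant nondegenerate form on $\fg$ (needed to identify $\fg^F$ with its dual $G^F$-equivariantly) again uses that $p$ is very good for $G$. With these in place the computation is routine Fourier analysis on the finite vector space $\fg^F$.
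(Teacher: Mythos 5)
Your proof is correct and follows exactly the approach the paper has in mind (the paper cites \cite[\S 3]{HLRV11}, where this Fourier--analytic argument appears): expand the indicator of $m=0$ via orthogonality of additive characters on $\fg^F$, factor the sum, use $G$-invariance of $\kappa$ to rewrite $\kappa(\xi,[a,b])=\kappa([\xi,a],b)$, and collapse the $b$-sum to pick out $\fg_\xi^F$. One small point of hygiene: you introduce a normalised Fourier transform $\mathcal F(f)(x)=|\fg^F|^{-1}\sum_y\psi(\kappa(x,y))f(y)$, whereas the paper's $\mathcal F$ in \S\ref{s:invariant} has no $|\fg^F|^{-1}$ prefactor; you do flag this, but it would be cleaner to adopt the paper's convention from the start, in which case $\sum_{c_i\in\cO_i^F}\psi(\kappa(\xi,c_i))=\mathcal F(1_{\cO_i}^G)(\xi)$ with no stray factor, and the prefactor $|\fg^F|^{g-1}$ comes directly from the $|\fg^F|^g$ produced by the $g$ commutator sums against the single $|\fg^F|^{-1}$ from the delta function.
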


\begin{cor} \label{c:countY} 
If $\cO_i$'s are generic and stable, then
\[
|Y^F|=|[Y]^F|=\frac{|\bA^F||Z^F|}{|G^F|}=\frac{|Z^F||\fg^F|^{g-1}}{|G^F|} \sum_{x\in \fg^F} |\fg_x^F|^g \prod_{i=1}^n \mathcal{F}(1_{\cO_i}^G)(x).
\]
\end{cor}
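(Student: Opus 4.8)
The plan is to run the same chain of reductions used to count points on $\bX$ in the previous subsection, with the Additive Frobenius Mass Formula playing the role of the multiplicative one. First I would record the additive analogues of the geometric results of \S\ref{s:genericVar}: under the additive generic assumption, every point of $\bA$ is stable with finite \'etale $G/Z$-stabiliser, $\bA$ is smooth, and the quotient stack $[Y]:=[\bA/(G/Z)]$ is a smooth Deligne-Mumford stack with coarse moduli space $Y=\bA/\!\!/G$. The argument is a verbatim transcription of the multiplicative case: if a parabolic $P\subseteq G$ contains all the entries $a_i,b_i,c_i$ of a point of $\bA$, then the derived subalgebra $[\fp,\fp]$ contains $\sum_i[a_i,b_i]$, and the relation $\sum_i[a_i,b_i]+\sum_i c_i=0$ forces $\sum_i c_i\in[\fp,\fp]$, contradicting genericity unless $P=G$; Schur's lemma together with Theorem~\ref{t:Herpel} then gives finiteness and \'etaleness of the stabilisers, and smoothness of $\bA=m^{-1}(0)$ follows because $0$ is a regular value of $m$.

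Granting this, the first equality $|Y^F|=|[Y]^F|$ is immediate, since a Deligne-Mumford stack and its coarse moduli space have the same number of $\Fq$-points, cf. \cite[\S2]{Behrend91} and \cite{BogaartEdixhoven}. For the second equality I would apply Lang's theorem twice: Behrend's groupoid cardinality formula \cite[Lemma~3.5.6]{Behrend91} gives $|[Y]^F|=|\bA^F|/|(G/Z)^F|$, and since $Z=Z(G)$ is connected, Lang's theorem applied to $Z$ makes the sequence $1\to Z^F\to G^F\to(G/Z)^F\to 1$ exact, so $|(G/Z)^F|=|G^F|/|Z^F|$; combining the two gives $|[Y]^F|=|\bA^F||Z^F|/|G^F|$. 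The final equality is then obtained simply by substituting the Additive Frobenius Mass Formula $|\bA^F|=|\fg^F|^{g-1}\sum_{x\in\fg^F}|\fg_x^F|^g\prod_{i=1}^n\mathcal{F}(1_{\cO_i}^G)(x)$ into this expression.

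I do not expect a genuine obstacle here: the statement is a formal corollary of ingredients already assembled. The one point deserving a moment's care is the first step, namely checking that the additive genericity hypothesis really does produce a Deligne-Mumford stack (so that the coarse-space and stack point counts coincide), and that it is precisely the connectedness of $Z$ that licenses the clean split $|(G/Z)^F|=|G^F|/|Z^F|$. Both are direct analogues of arguments already carried out in this section, so the ``proof'' amounts to putting them together.
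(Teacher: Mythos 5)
Your proof is correct and follows exactly the route the paper intends: the paper itself says in \S\ref{ss:additive} that "the above considerations have additive analogues which we leave to the reader to formulate," and your argument is precisely that transcription — establish that $[Y]$ is a smooth Deligne--Mumford stack with coarse space $Y$ via the additive analogue of the genericity/stability argument, invoke the coarse-space point-count equality and Behrend's groupoid cardinality formula, use Lang's theorem for the connected group $Z$ to write $|(G/Z)^F|=|G^F|/|Z^F|$, and substitute the Additive Frobenius Mass Formula. There is nothing to add; this is the paper's (implicit) proof spelled out.
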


\section{Counting points on character varieties} \label{s:countX} 
In this section, we count points on the character varieties introduced in \S \ref{s:main} and prove Theorem \ref{t:countX}.   
We treat the case of a once-punctured surface first and mention the modifications required in the multi-punctured case at the end.  

Let $G$ be a connected split reductive group over $k=\overline{\Fq}$ equipped with an untwisted Frobenius $F$, giving $G$ an $\Fq$-structure.  We assume $Z(G)$ is connected and $p=\mathrm{char}(k)$ is very good for $G$. 
Let $T$ be a maximal split $F$-stable torus of $G$. Let $S\in T^F$ be a strongly regular generic element and $C=G\cdot S$ the corresponding conjugacy class in $G$. Let $X$ denote the character variety associated to the once-punctured Riemann surface and the conjugacy class $C$.

By the Frobenius Mass Formula (Theorem \ref{t:Frob}), we have 

\begin{equation}\label{eq:Frob}
|X^F|=\frac{|\bR^F|}{|(G/Z)^F|} = \frac{|Z^F|}{|T^F|}\sum_{\chi\in \Irr(G^F)} \left(\frac{|G^F|}{\chi(1)}\right)^{2g-1} \chi(S). 
\end{equation} 

Note that the equality continues to hold if $F$ is replaced by $F^m$ for a positive integer $m$. 
In what follows, we use a crucial result of Deligne and Lusztig to rewrite the above sum in a more tractable form. 

\subsection{A theorem of Deligne and Lusztig} 
Given a finite abelian group $A$, write $A^\vee$ for the Pontryagin dual $\Hom(A, \bCt)$. 
Recall that a principal series representation is an irreducible constituent of 
\[
R_T^G\, \theta:=\mathrm{Ind}_{B^F}^{G^F}\, \theta,\qquad \theta \in (T^F)^\vee. 
\]

 \begin{thm}[Corollary 7.6 of \cite{DL76}] Let $\chi\in \Irr(G^F)$ be an arbitrary character and $S\in T^F$ a strongly regular element. Then 
 \[
 \chi(S) = \sum_{\theta \in (T^F)^\vee} \langle \chi, R_T^G\, \theta \rangle\, \theta(S). 
 \]
 \label{t:DL}
\end{thm}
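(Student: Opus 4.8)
The plan is to expand the right-hand side using the definition of the Hermitian inner product on class functions of $G^F$, substitute the Deligne--Lusztig character formula for $R_T^G\theta$, and then collapse the resulting double sum using Fourier orthogonality on the finite abelian group $T^F$. Writing $\langle\chi,R_T^G\theta\rangle=\tfrac{1}{|G^F|}\sum_{g\in G^F}\chi(g)\overline{R_T^G\theta(g)}$, the right-hand side becomes
\[
\sum_{\theta\in(T^F)^\vee}\langle\chi,R_T^G\theta\rangle\,\theta(S)=\frac{1}{|G^F|}\sum_{g\in G^F}\chi(g)\sum_{\theta\in(T^F)^\vee}\overline{R_T^G\theta(g)}\,\theta(S),
\]
so the task reduces to evaluating, for each fixed $g$, the inner sum over $\theta$.

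Next I would invoke the Deligne--Lusztig character formula \cite[Thm.~4.2]{DL76}: writing $g=g_sg_u$ for the Jordan decomposition and $M:=C_G(g_s)^\circ$,
\[
R_T^G\theta(g)=\frac{1}{|M^F|}\sum_{\substack{x\in G^F\\ x^{-1}g_sx\in T}}Q^{M}_{xTx^{-1}}(g_u)\,\theta(x^{-1}g_sx),
\]
where the Green functions $Q$ are $\mathbb{Z}$-valued, hence real, so conjugation affects only the $\theta$-factor. Summing over $\theta$ and applying the orthogonality relations for characters of $T^F$, namely $\sum_{\theta}\overline{\theta(x^{-1}g_sx)}\theta(S)=|T^F|\cdot[x^{-1}g_sx=S]$, one finds that the inner sum vanishes unless $g_s$ is $G^F$-conjugate to $S$, in which case it equals $\tfrac{|T^F|}{|M^F|}\sum_{x:\,x^{-1}g_sx=S}Q^{M}_{xTx^{-1}}(g_u)$.

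Now I would use strong regularity. Since $C_G(S)=T$, for every contributing $g$ we get $M=C_G(g_s)=xTx^{-1}$, a torus; as the unipotent part $g_u$ lies in $M$ and a torus has no nontrivial unipotent element, this forces $g_u=1$, i.e.\ $g=g_s$ lies in the $G^F$-conjugacy class of $S$, and moreover $Q^{T}_{T}(1)=\dim R_T^T(\mathbf 1)=1$. Hence the inner $\theta$-sum equals $\#\{x\in G^F:x^{-1}gx=S\}=|C_G(S)^F|=|T^F|$ for each $g$ in the conjugacy class of $S$ and $0$ otherwise. Since that class has size $|G^F|/|T^F|$, the right-hand side collapses to $\tfrac{1}{|G^F|}\cdot\tfrac{|G^F|}{|T^F|}\cdot|T^F|\cdot\chi(S)=\chi(S)$, as claimed.

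The only substantial ingredient is the Deligne--Lusztig character formula, which I would treat as a black box; the one point demanding care is the normalisation of the Green functions in tandem with the precise role of strong regularity --- it is exactly the hypothesis $C_G(S)=T$ (connected, and a torus) that kills the unipotent term and trivialises every Green-function value that appears. Equivalently, one may phrase the whole argument via the adjoint ${}^*R_T^G$: Fourier inversion on $T^F$ identifies the right-hand side with $({}^*R_T^G\chi)(S)$, and the character formula for Lusztig restriction at a semisimple element with connected centraliser degenerates, at the strongly regular element $S$, to $\chi(S)$.
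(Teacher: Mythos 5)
Your argument is correct. The paper itself offers no proof of this statement; it simply cites Corollary~7.6 of Deligne--Lusztig, so there is no ``paper's approach'' to compare against beyond the reference. What you have written is essentially a reconstruction of the Deligne--Lusztig argument: expand $\langle\chi,R_T^G\theta\rangle$, substitute the character formula of \cite[Thm.~4.2]{DL76}, exchange the order of summation, and use Pontryagin orthogonality on $T^F$ to kill every semisimple part not $G^F$-conjugate to $S$. Strong regularity is then used exactly where it must be: since $C_G(S)=T$, every surviving $g$ has $M=C_G(g_s)^\circ=C_G(g_s)=xTx^{-1}$ a torus, which forces $g_u=1$ and collapses the Green function to $Q^{T'}_{T'}(1)=1$; counting the conjugates of $S$ and the size of the stabiliser then gives the clean identity. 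The two small points worth flagging --- that $g_u\in M$ holds automatically because $M$ is already the full centraliser in the strongly regular case, and that $|M^F|=|T^F|$ because $M$ is conjugate to $T$ by an element of $G^F$ --- you handle correctly. Your closing reformulation via $\displaystyle\sum_\theta\langle\chi,R_T^G\theta\rangle\,\theta={}^*R_T^G\chi$ and the Curtis-type restriction formula is an equivalent, and perhaps tidier, packaging of the same computation.
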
 

\begin{cor} \label{c:regular} 
If $S\in T^F$ is a strongly regular element and $\chi(S)\neq 0$, then $\chi$ is a principal series representation.  
\end{cor}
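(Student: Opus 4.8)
The plan is to deduce Corollary \ref{c:regular} directly from Theorem \ref{t:DL}, using only the definitions of principal series representations and the basic properties of the virtual characters $R_T^G\,\theta$. The corollary is essentially a contrapositive reading of the formula: if $\chi$ is \emph{not} a principal series representation, then $\chi(S)=0$ for every strongly regular $S\in T^F$.

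First I would recall the setup: by Theorem \ref{t:DL}, for any $\chi\in\Irr(G^F)$ and any strongly regular $S\in T^F$,
\[
\chi(S)=\sum_{\theta\in(T^F)^\vee}\langle\chi,R_T^G\,\theta\rangle\,\theta(S).
\]
The key observation is that $\langle\chi,R_T^G\,\theta\rangle=\langle\chi,\mathrm{Ind}_{B^F}^{G^F}\theta\rangle$ is, by Frobenius reciprocity, the multiplicity of $\chi$ in the induced representation $\mathrm{Ind}_{B^F}^{G^F}\theta$; in particular it is a \emph{non-negative integer}. By definition (as stated just before Theorem \ref{t:DL}), $\chi$ is a principal series representation precisely when $\chi$ occurs as an irreducible constituent of $R_T^G\,\theta$ for some $\theta\in(T^F)^\vee$, i.e.\ precisely when $\langle\chi,R_T^G\,\theta\rangle\neq 0$ (equivalently $\geq 1$) for some $\theta$.

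Now I would argue by contraposition. Suppose $\chi$ is not a principal series representation. Then $\langle\chi,R_T^G\,\theta\rangle=0$ for every $\theta\in(T^F)^\vee$, so every summand in the displayed formula vanishes, whence $\chi(S)=0$. Taking the contrapositive: if $\chi(S)\neq 0$ for some strongly regular $S\in T^F$, then there must exist $\theta$ with $\langle\chi,R_T^G\,\theta\rangle\neq 0$, and hence $\chi$ is a principal series representation. This completes the proof.

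I do not expect any genuine obstacle here: the corollary is a one-line consequence of Theorem \ref{t:DL} once one unwinds the definition of ``principal series representation.'' The only point requiring a modicum of care is making explicit that the coefficients $\langle\chi,R_T^G\,\theta\rangle$ are honest (non-negative) multiplicities — since $R_T^G\,\theta$ here is the genuine induced representation $\mathrm{Ind}_{B^F}^{G^F}\theta$ rather than a virtual character — so that ``all coefficients vanish'' is equivalent to ``$\chi$ appears in no $R_T^G\,\theta$.'' With that remark in place the argument is immediate.
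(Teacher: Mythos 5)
Your proof is correct and is exactly the one-line contrapositive argument the paper intends; the paper simply states the corollary without writing out a proof because it is an immediate consequence of Theorem \ref{t:DL}.
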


The above corollary implies that only principal series representations contribute to the sum in the expression \eqref{eq:Frob} for $|X^F|$.
Note that one does not need Deligne--Lusztig theory to formulate the above theorem and its corollary. However, the only proof we know of uses the full power of the Deligne--Lusztig theory.

\subsubsection{}  As we have assumed that $G$ has connected centre, the induced representations $R_T^G\, \theta$ and $R_T^G\, \theta'$ have a common constituent if and only if $\theta$ and $\theta'$ are $W$-conjugate, cf.\ \cite[Corollary 6.3]{DL76}.  
Thus, we can associate to a principal series representation $\chi\in \Irr(G^F)$ a character $\theta=\theta_\chi \in (T^F)^\vee$, well-defined up to $W$-conjugacy.

\subsubsection{Interlude on Pontryagin duality} \label{s:Pontryagin}
Let $\mu_{\infty,p'}(\bC)$ denote the set of  roots of unity in $\bCt$ whose order is prime to $p$. We choose, once and for all, group isomorphisms
\begin{equation}\label{eq:rootsOfUnity}
k^\times \simeq (\mathbb{Q}/\mathbb{Z})_{p'} \qquad \textrm{and} \qquad (\mathbb{Q}/\mathbb{Z})_{p'}\simeq \mu_{\infty,p'}(\bC). 
\end{equation} 
 As noted in \cite[\S 5]{DL76}, this induces a group isomorphism 
 \begin{equation}
(T^F)^\vee\simeq \chT^F
\end{equation}

\subsubsection{} Using the above identification, we can think of $\theta\in (T^F)^\vee$ as an element of $\check{T}^F\subseteq \check{G}^F$. 
Let $\check{G}_\theta$ denote the centraliser of $\theta$ in $\check{G}$. The fact that $Z(G)$ is connected implies that $\chG$ has simply connected derived subgroup. By Steinberg's theorem, the stabiliser  $\chG_\theta$ is a \emph{connected} standard pseudo-Levi subgroup of $\chG$.  The Weyl group of $\chG_\theta$ equals 
\[
W_\theta=\{w\in W\, |\, w\cdot \theta=\theta\}.
\] 
The Langlands dual group $G_\theta:=(\check{G}_\theta)^{\check{}}$ is a standard endoscopy group for $G$ with Weyl group  $W(G_\theta)=W_\theta$.

\subsubsection{} \label{s:rep} Since $G$ is assumed to have connected centre, the constituents of $R_T^G \, \theta$ are in canonical bijection with $\Irr(W_\theta)$,  cf.\ \cite[Corollary 4.20]{Kilmoyer78} or \cite[Theorem 6.8]{DL76}.  Let $\chi_{\theta, \rho} \in R_T^G\, \theta$ be the irreducible character corresponding to $\rho\in \Irr(W_\theta)$. Then, the double centraliser theorem (cf.\ \cite[Theorem 5.18.1]{Etingof11}) gives
\[
\langle \chi_{\theta, \rho}, R_T^G\, \theta\rangle = \dim(\rho).
\]

\subsubsection{} Consider the set of pairs $(L, \rho)$, where $L$ is a standard endoscopy group for $G$ and $\rho$ is an irreducible character of $W(L)$. Note $W$ acts on this set by conjugation. Let $\cT(G)$ denote the set of $W$-orbits. We call the elements of $\cT(G)$ \emph{$G$-types} and denote them by $\tau=[L,\rho]$. Note that $\cT(G)$ is a finite set which depends only on the root system $\Phi$ of $G$; in particular, it is independent of the ground field.

By the above discussions, we can associate a type to every principal series representation. (This is a reductive analogue of the notion of type used in \cite{HRV08, HLRV11, Cambo17}.) In other words, we have a map 
\begin{equation} \label{eq:typeMap}
\textrm{Principal series representations of $G^F$} \ra \cT(G). 
\end{equation} 

\subsubsection{Unipotent characters} Now let $L$ be a standard endoscopy subgroup of $G$. The irreducible constituents of $R_T^L 1$ (which are known as unipotent principal series representations of $L^F$) are in bijection with irreducible characters of $W(L)$. Let $\tilde{\rho}$ denote the unipotent principal series character of $L^F$ corresponding to $\rho\in \Irr(W(L))$. 

 \begin{defe} We define the \emph{$q$-mass} of $\tau=[L,\rho]\in \cT(G)$ by 
 \[
 m_\tau(q):= q^{|\Phi^+(G)| - |\Phi^+(L)|} \frac{|L^F|}{\tilde{\rho}(1)}.
 \]
 \end{defe} 
 Here, $\Phi^+(L)$ denotes the set of positive roots of $L$. 
 Note that $\tilde{\rho}(1)$ is a polynomial in $q$ which divides $|L^F|$, cf. \cite[Remark 2.3.27]{GM20}.
Thus, $m_\tau(q)$ is a polynomial in $q$. 

\subsubsection{} The relevance of $m_\tau(q)$ emerges in the following proposition which can be found in, for instance,  \cite[Corollary 2.6.6]{GM20}: 

  \begin{prop} \label{p:mtau}
  Suppose $\chi$ is a principal series character of $G^F$ of type $\tau=[L,\rho]$. Then 
\[
\displaystyle   \frac{|G^F|}{\chi(1)} = m_\tau(q). 
\] 
\end{prop}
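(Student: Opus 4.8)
The plan is to reduce Proposition~\ref{p:mtau} to a computation of $\chi(1)$ for a principal series character via the theory of Hecke algebras, together with the standard formula for the degree of a unipotent principal series character of $L^F$. First I would recall the setup: a principal series character $\chi$ of type $\tau=[L,\rho]$ is, by the discussion of \S\ref{s:rep}, of the form $\chi_{\theta,\rho}$ for some $\theta\in(T^F)^\vee$ with $\chG_\theta$ dual to $L$ (so $W_\theta=W(L)$), and $\rho\in\Irr(W_\theta)$. The key structural input is that, since $Z(G)$ is connected, the endomorphism algebra $\mathrm{End}_{G^F}(R_T^G\,\theta)$ is the Iwahori--Hecke algebra $\mathcal{H}(W_\theta,q)$ of the Coxeter group $W_\theta$ with (equal) parameter $q$; this is exactly what underlies the bijection of \cite[Corollary 4.20]{Kilmoyer78} used in \S\ref{s:rep}. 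Under this identification, the multiplicity space of $\chi_{\theta,\rho}$ in $R_T^G\,\theta$ is the $\mathcal{H}(W_\theta,q)$-module corresponding to $\rho$ under the Tits deformation / Lusztig--Benson--Curtis correspondence between $\Irr(W_\theta)$ and $\Irr(\mathcal{H}(W_\theta,q))$.

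Next I would write down the two degree formulas and divide. On the one hand, the degree of $R_T^G\,\theta=\mathrm{Ind}_{B^F}^{G^F}\theta$ is $|G^F|/|B^F|=|G^F/B^F|$, which by the Bruhat decomposition equals $\sum_{w\in W}q^{\ell(w)}$, the Poincar\'e polynomial of $W$ at $q$. Decomposing $R_T^G\,\theta$ into its isotypic pieces and using $\langle\chi_{\theta,\rho},R_T^G\,\theta\rangle=\dim(\rho)$ (already recorded in \S\ref{s:rep}), the degree $\chi_{\theta,\rho}(1)$ is obtained from the "fake degree'' / generic-degree of the Hecke-algebra module labelled by $\rho$, scaled by $|G^F/B^F|$ divided by the Poincar\'e polynomial of $W$ evaluated appropriately. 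Cleaner: one uses the standard fact (see e.g. \cite[\S2.6]{GM20}) that for a principal series character $\chi_{\theta,\rho}$,
\[
\chi_{\theta,\rho}(1)=\frac{|G^F|_{p'}}{|L^F|_{p'}}\cdot \frac{[W:W_\theta]_{q}\text{-factor}}{\cdots}\cdot \tilde\rho(1),
\]
but the form I actually want is the one that makes the cancellation transparent. On the other hand, the unipotent principal series character $\tilde\rho$ of $L^F$ satisfies, by the same Hecke-algebra argument applied to $L$ in place of $G$ with the trivial character $\theta=1$, that $\tilde\rho(1)$ is the generic degree $D_\rho(q)$, a polynomial in $q$ dividing $|L^F|$; and $R_T^L(1)(1)=|L^F/B_L^F|=\sum_{w\in W(L)}q^{\ell(w)}$.

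The cleanest route, which I would take to avoid re-deriving generic-degree combinatorics, is to invoke directly the known comparison: for $\chi$ principal series of type $[L,\rho]$,
\[
\chi(1)=q^{|\Phi^+(G)|-|\Phi^+(L)|}\,\frac{|G^F|_{p'}}{|L^F|_{p'}}\cdot\frac{\tilde\rho(1)}{q^{|\Phi^+(L)|}\cdots}
\]
--- i.e. I would quote \cite[Corollary 2.6.6]{GM20} essentially verbatim, observing that it is stated there as $\chi(1)=q^{N_G-N_L}\,[G^F:P^F]_{p'}\,\tilde\rho(1)$ where $N_G=|\Phi^+(G)|$, so that
\[
\frac{|G^F|}{\chi(1)}=\frac{|G^F|}{q^{N_G-N_L}\,[G^F:P^F]_{p'}\,\tilde\rho(1)}=\frac{q^{N_G-N_L}|L^F|}{\tilde\rho(1)}=m_\tau(q),
\]
using $|G^F|=q^{N_G}\cdot$(something)$=q^{N_G-N_L}|L^F|\cdot[G^F:P^F]_{p'}\cdot q^{\text{(torus part cancels)}}$ --- the bookkeeping being: $|G^F|/|L^F|=q^{N_G-N_L}\,[G^F:P^F]_{p'}$ since $G$ and its Levi $L$ have maximal tori of the same rank, so all the $q^{\dim T}$ factors and all the primes dividing $q$ beyond the power-of-$q$ part cancel. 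Thus the proof is: (1) identify $\chi$ as $\chi_{\theta,\rho}$; (2) apply the degree formula of \cite[Corollary 2.6.6]{GM20}; (3) rearrange using $|G^F|/|L^F|_{p'}\cdot|L^F|^{-1}$-bookkeeping and the definition of $m_\tau(q)$.

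The main obstacle I anticipate is purely the algebraic bookkeeping in step~(3): one must be careful that $\tilde\rho(1)$ — the degree of the unipotent character of $L^F$ attached to $\rho$ — is normalised the same way in \cite[Corollary 2.6.6]{GM20} as in our Definition of $m_\tau(q)$ (in particular that it is the polynomial in $q$ dividing $|L^F|$, per \cite[Remark 2.3.27]{GM20}, and not, say, its $p'$-part or a generic degree normalised to have constant term~$1$), and that the power of $q$ extracted, $q^{|\Phi^+(G)|-|\Phi^+(L)|}$, matches $N_G-N_L$. There is no genuine mathematical difficulty here beyond correctly invoking and aligning the cited results; the one conceptual point worth stating explicitly is why $\chi$ is necessarily of the form $\chi_{\theta,\rho}$ with $W_\theta=W(L)$, which follows from Corollary~\ref{c:regular} only insofar as $\chi$ is principal series by hypothesis, and then from the classification in \S\ref{s:rep} together with the definition of the type map \eqref{eq:typeMap}.
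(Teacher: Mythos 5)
Your route is the same as the paper's: identify $\chi$ as $\chi_{\theta,\rho}$ with $W_\theta=W(L)$ via \S\ref{s:rep}, and then invoke \cite[Corollary 2.6.6]{GM20}. The paper gives no proof beyond this citation, so in that sense your reconstruction is supplying more detail. However, your step~(3) contains two errors that happen to cancel, and one conceptual slip, so the write-up as it stands would not survive scrutiny.

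The degree formula in \cite[Corollary 2.6.6]{GM20} (the Jordan-decomposition degree formula) is, in the present normalisation,
\[
\chi(1)\;=\;\frac{|G^F|_{p'}}{|L^F|_{p'}}\;\tilde\rho(1),
\]
\emph{without} an extra factor $q^{N_G-N_L}$. You state it as $\chi(1)=q^{N_G-N_L}[G^F:P^F]_{p'}\tilde\rho(1)$, which is off by a factor of $q^{N_G-N_L}$: test it on $G=\GL_2$, $\theta$ in general position, $L=T$, $\rho=1$, where $\chi(1)=q+1$ but your formula gives $q(q+1)$. You then compute $\tfrac{|G^F|}{q^{N_G-N_L}[G^F:P^F]_{p'}}=q^{N_G-N_L}|L^F|$; the correct value with $[G^F:P^F]_{p'}=|G^F|_{p'}/|L^F|_{p'}$ is $|L^F|$, not $q^{N_G-N_L}|L^F|$. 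The two mistakes cancel, landing you on $m_\tau(q)$, but neither intermediate equality holds. The clean computation is
\[
\frac{|G^F|}{\chi(1)}
=\frac{|G^F|\,|L^F|_{p'}}{|G^F|_{p'}\,\tilde\rho(1)}
=\frac{q^{|\Phi^+(G)|}\,|L^F|_{p'}}{\tilde\rho(1)}
=\frac{q^{|\Phi^+(G)|-|\Phi^+(L)|}\,|L^F|}{\tilde\rho(1)}
=m_\tau(q),
\]
using only that the $p$-part of $|G^F|$ is $q^{|\Phi^+(G)|}$ and likewise for $L$.

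Separately, the appearance of a parabolic $P$ with ``Levi $L$'' is a conceptual mistake worth flagging: $L$ here is a standard \emph{endoscopy} group, dual to a pseudo-Levi in $\check G$. It shares $T$ with $G$ and has a well-defined order polynomial $|L^F|$, but it is in general not a subgroup of $G$, so there is no parabolic $P\subseteq G$ with Levi $L$ and the quantity $[G^F:P^F]$ has no meaning. The Jordan-decomposition formula above only involves the orders $|G^F|$ and $|L^F|$ and hence makes sense; the Harish-Chandra-series picture with a genuine parabolic applies to the pair $(T,\theta)$ (where $P=B$), not to $(L,\tilde\rho)$. Your Hecke-algebra preamble is correct and is indeed what underlies \cite[Corollary 4.20]{Kilmoyer78}, but it is not actually used in the final computation.
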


\begin{rem} \label{r:divide} As noted in \cite[Remark 2.3.7]{GM20}, for all irreducible characters $\chi$, we have that
$|T^F|$ divides $m_\tau(q)$ (as polynomials in $q$).
\end{rem} 
   
 \subsubsection{} We can now rewrite the Frobenius Mass Formula \eqref{eq:Frob} in terms of types. To this end, let  $\Irr(G^F)_\tau$ denote the set of irreducible principal series characters of type $\tau$. 
Then, we have
 \[
|\bX^F| = \frac{|Z^F|}{|T^F|} \sum_{\tau \in \cT(G)} m_\tau(q)^{2g-1}  \sum_{\chi \in \Irr(G^F)_\tau} \chi(S). 
\] 

\subsubsection{} \label{s:Stau} For ease of notation, define   
\[
\boxed{
S_\tau(q):= \displaystyle \sum_{\chi \in \Irr(G^F)_\tau} \chi(S).}
\]
Then, we find
 \[
|\bX^F| = \frac{|Z^F|}{|T^F|} \sum_{\tau\in \cT(G)} m_\tau(q)^{2g-1} S_\tau(q).
\] 
The importance of the above formula is that the sum is over a parameter which does not depend on $q$. Moreover, it implies that computing $|\bX^F|$ is reduced to computing the character sums $S_\tau(q)$. 
\begin{rem} 
\begin{enumerate} 
\item[(i)] In obtaining the above formula, we used the assumption that $S$ is strongly regular, but not the assumption that $S$ is generic.
\item[(ii)] The above formula holds if $F$ is replaced by $F^m$ for a positive integer $m$. 
\end{enumerate} 
\end{rem} 

\subsection{Evaluating the character sum $S_\tau$}\label{s:onepunctureStau}   In this subsection, we rewrite the expression for $S_\tau$ in terms of sums of characters of $T^F$, thus turning the problem of computing $S_\tau$ into a ``commutative" problem. 

Suppose $\tau=[L,\rho]\in \cT(G)$  where $L$ is a standard endoscopy group for $G$ with Weyl group $W(L)$ and $\rho$ is an irreducible character of $W(L)$. Let $[L]$ denote the $W$-orbit of $L$ in $G$ and  $[W(L)]$ the $W$-orbit of $W(L)$ in $W$. 

\subsubsection{}\label{s:taufibre} We start by elucidating the nature of $\Irr(G^F)_\tau$. 
It  consists of characters $\chi_{\theta,\rho}$ as $\theta$ runs over elements of $(T^F)^\vee$ satisfying $[G_\theta]=[L]$. Equivalently, 
\[
\Irr(G^F)_\tau=\{\chi_{\theta, \rho}\, |\, \theta \in (T^F)^\vee, \,  [W_\theta]=[W(L)]\}. 
\]
We refer the reader to \S \ref{s:rep} for the definition of $\chi_{\theta, \rho}$.

\subsubsection{} The above discussion implies
\begin{multline*}
S_\tau(q) = \displaystyle \sum_{\chi \in \Irr(G^F)_\tau} \chi(S) =
\frac{|W(L)|}{|W|}\sum_{\substack{\theta\in \chT^F \\ [W_\theta]=  [W(L)]}} \chi_{\theta, \rho}(S) =
\frac{|W(L)|}{|N_W(W(L))|}\sum_{\substack{\theta\in \chT^F \\ W_\theta=  W(L)}} \chi_{\theta, \rho}(S).
\end{multline*}
The second equality follows from the fact that $\chi_{\theta, \rho}$ depends only on the $W$-orbit of $\theta$.

\subsubsection{} \label{s:Carter} Recall that $[L]$ denotes the orbit of $L$ under the action of $W$ on subgroups of $G$. According to \cite[Lemma 34]{Carter72}, we have ${|W/N_W(W(L))|}=|[L]|$. Thus, we obtain 
\[
S_\tau(q) =
\frac{|W(L)||[L]|}{|W|}\sum_{\substack{\theta\in \chT^F \\ W_\theta=  W(L)}} \chi_{\theta, \rho}(S).
\]

\begin{rem} 
In \cite[Proposition 28]{Carter72}, one finds the following formula for $|[L]|$. Let $W^\perp(L)$ be the Weyl group generated by the roots orthogonal to $\Phi(L)$.  Let $A_W(L)$ denote the group of symmetries of the Dynkin diagram of $L$ induced by elements of $W$. Then 
\[
|[L]|=\frac{|W|}{|W(L)||W^\perp(L)||A_W(L)|}. 
\]
\end{rem} 

\subsubsection{} Next, Theorem \ref{t:DL} implies 
\[
\displaystyle \chi_{\theta,\rho}(S) =   \sum_{\theta'\in \check{T}^F} \langle \chi_{\theta, \rho}, R_{T}^G \, \theta'\rangle \,\theta'(S).
\]
Note that a summand is zero unless $\theta'$ is in the same $W$-orbit as $\theta$, in which case it  equals $\dim(\rho)\, \theta'(S)$; thus, 
\[
\displaystyle \chi_{\theta,\rho}(S)  =
\dim(\rho) \sum_{w\in W/W_\theta}   (w\cdot \theta)(S)  = \frac{\dim(\rho)}{|W_\theta|} \sum_{w\in W}   \theta(w\cdot S).  
\]
Note that the last sum depends only on the $W$-orbit of $\theta$.

\subsubsection{} We now plug in the explicit formula for $\chi_{\theta, \rho}$ into the formula for $S_\tau$ to obtain
\[
S_\tau(q) = 
\frac{|W(L)||[L]|}{|W|} \sum_{\substack{\theta\in \chT^F \\ W_\theta = W(L)}} \frac{\dim(\rho)}{|W_\theta|} \sum_{w\in W}   \theta(w\cdot S)
=\frac{\dim(\rho)|[L]|}{|W|}\sum_{w\in W}  \sum_{\substack{\theta\in \chT^F \\ W_\theta = W(L)}}     \theta(w\cdot S).
\]

\subsubsection{} To alleviate notation, let  
\[
\boxed{
\alpha_{L,S}(q):= \sum_{\substack{\theta\in \chT^F \\ W_\theta=W(L)}} \theta(S).} 
\]
Thus, we have shown 
\[
S_\tau(q)= \frac{\dim(\rho)|[L]|}{|W|} \sum_{w\in W} \alpha_{L,w\cdot S}(q).
\]
This formula continues to hold if $q$ is replaced by $q^m$ for a positive integer $m$. 

The above equality implies that the problem of computing $S_\tau$ reduces to evaluating the character sum $\alpha_{L,S}$. To compute the latter, we first consider an auxiliary sum $\alpha_{L,S}^\supseteq$ which is easier to handle.

\subsection{The auxiliary sum $\alpha_{L,S}^\supseteq$} \label{ss:Frob}
 Given a standard endoscopy group $L$ for $G$ and $S\in T^F$, define 
\[
\alpha_{L,S}^\supseteq(q):=\sum_{\substack{\theta\in \chT^F \\ W_\theta  \supseteq W(L)}} \theta(S)
=\sum_{\theta \in (\check{T}^F)^{W(L)}} \theta(S).
\]

\begin{lem} We have
\[
\alpha^{\supseteq}_{ L ,S}(q)= 
\begin{cases} |(\chT^F)^{W(L)}| & \textrm{if $S\in [L^F, L^F]$}\\
0 & \textrm{otherwise}. 
\end{cases} 
\]
This holds if $q$ and $F$ are replaced by $q^m$ and $F^m$, for any positive integer $m$.  
\end{lem} 
\begin{proof} The lemma follows from the fact (proved in \cite[\S 5.2]{KNP25}) that the Pontryagin dual of the embedding of finite abelian groups $(\chT^F)^{W(L)} \hookrightarrow \chT^F$ is the canonical quotient map 
\[
T^F \twoheadrightarrow \frac{T^F}{T^F\cap [L^F, L^F]}.\qedhere
\]
\end{proof} 

\subsubsection{} Now we simplify the above expression using the fact $S\in [G,G]$ is assumed to be generic. This implies that $S\in [L,L]$ if  and only if the root system $\Phi(L)$ is isolated in $\Phi=\Phi(G)$; i.e., $\Phi(L)$ is not contained in any proper Levi subsystem of $\Phi$.

 Note that the containment $[L^F, L^F] \subseteq [L,L]$ may be proper (e.g. $L=G=\PGL_n$). Thus, it could happen that $S\in [L,L]\setminus [L^F, L^F]$. However, this subtlety goes away after a finite base change. Indeed, 
 there are finitely many isolated standard endoscopic groups for $G$; thus, given $S\in T^F$, there exists a positive integer $m$, such that  $S\in [L^{F^m}, L^{F^m}]$ for all isolated standard endoscopy groups. We conclude: 

\begin{cor} There exists a positive integer $m$ such that 
\[
\alpha^{\supseteq}_{ L ,S}(q^m)= 
\begin{cases} |(\chT^{F^m})^{W(L)}| & \textrm{if $L$ is isolated}\\
0 & \textrm{otherwise}. 
\end{cases} 
\]
Moreover, the same formula holds if $m$ is replaced by $mx$ for a positive integer $x$. 
\end{cor} 

\subsubsection{Description of $\check{T}^{W(L)}$} To proceed further, we need a description of $T^{W(L)}$. It is proved in \cite[\S 5]{KNP25} that, under the assumption that $G$ has connected centre, we have:
\[
\check{T}^{W(L)} \simeq Z(\check{L}). 
\]
Thus, 
\[
 |(\chT^F)^{W(L)}|=|Z(\check{L})^F|=|\pi_0(Z(\check{L}))^F|\times |(Z(\check{L})^\circ)^F|.
 \]
By Corollary \ref{c:etale}, the prime $p=\mathrm{char}(k)$ does not divide $|\pi_0(Z(\check{L}))|$. Thus, there exists a positive integer $n$ such that  $q^n\equiv 1 \mod |\pi_0(Z(\check{L}))|$. This, in turn, implies 
\[
\pi_0(Z(\check{L}))^{F^n}= \pi_0(Z(\check{L})).
\]
Combining this with the previous corollary, we obtain: 

\begin{cor} \label{c:r} 
There exists a positive integer $r$ such that for every standard endoscopy $L$ of $G$, we have 
\[
\alpha^{\supseteq}_{ L ,S}(q^r)= 
\begin{cases} |\pi_0(Z(\check{L}))|\times |(Z(\check{L})^\circ)^{F^r}|  & \textrm{if $L$ is isolated}\\
0 & \textrm{otherwise}. 
\end{cases} 
\]
The same formula holds if $r$ is replaced by $rx$ for any positive integer $x$. \end{cor}

\subsection{Evaluating $\alpha_{L,S}$}\label{s:nu} We now give an expression for $\alpha_{L,S}$ using M\"{o}bius inversion. 

\subsubsection{} 
Let $\mu$ denote the M\"{o}bius function on the (finite) poset of the standard endoscopy groups of $G$, ordered by inclusion of their root systems (Note that these root systems all live in $\Phi(G)$). M\"{o}bius inversion implies 
\[
\alpha_{L,S} = \sum_{L'} \mu(L,L') \alpha^{\supseteq}_{L',S}, 
\]
where  the sum is over standard endoscopy groups $L'$ satisfying $\Phi(L')\supseteq \Phi(L)$.  
 \subsubsection{} We have seen that $\alpha_{L',S}^\supseteq$ is $0$ unless $L'$ is isolated. In this case, our assumption that $G$ has connected centre implies  
 \[
 Z(\check{L'})^\circ = Z(\check{G})^\circ \simeq Z(G)^\circ= Z(G). 
 \]

 \subsubsection{} To simplify notation, let 
\[
\nu(L):= \sum_{L'} |\pi_0(Z(\check{L'}))| \,  \mu(L, L').
\]
Here, the sum runs over isolated standard endoscopy groups $L'$ of $G$ whose root systems contain $\Phi(L)$.   Thus, $\nu(L)$ is an integer depending only on the root data of $L$ and $G$; in particular, it is independent of the ground field $k$ (of very good characteristic). In fact, $\nu(L)$ depends only on the $W$-orbit of $L$. In view of the above discussions, we conclude:

\begin{prop}\label{p:alpha}  There exists a positive integer $r$ such that 
\[
\alpha_{L,S}(q^r) =  |Z^{F^r}|\,  \nu(L).
\]
The equality continues to hold if $r$ is replaced by $rx$ for any positive integer $x$.
\end{prop} 

Observe that $\alpha_{L,S}$ is independent of $S$ (provided $S$ is generic).

\subsection{Proof of Theorem \ref{t:countX} in the once-punctured case} 
The above discussions imply that there exists a positive integer $r$ such that 
\[
|\bX(\mathbb{F}_{q^{r}})| =\frac{|Z(\mathbb{F}_{q^{r}})|}{|T(\mathbb{F}_{q^{r}})|} \sum_{\tau=[L, \rho]\in \cT(G)} S_\tau(q^{r}) \,m_\tau(q^{r})^{2g-1}, 
\]
where 
\[
S_\tau(q^{r}) =  \frac{\dim(\rho)\, |[L]|}{|W|} \sum_{w\in W} \alpha_{L,w\cdot S}(q^{r}) =  \dim(\rho) \, |[L]| \, \nu(L)\,  |Z(\mathbb{F}_{q^{r}})|.  
\]
Moreover, the above formulas continue to hold if $r$ is replaced by a positive multiple. 
\subsubsection{}\label{sss:countingpoly} To proceed further, we need some notation. 
Let $f_Z$,  $f_T$, $f_L$ be counting polynomial for $Z$, $T$, $L$. Thus,
\[
f_Z(t)= (t-1)^{\dim(Z)},\quad f_T(t)=(t-1)^{\dim(T)},\quad f_L(t)=f_T(t) t^{|\Phi^+(L)|} P_L(t), 
\]
where $P_{L}(t)\in \mathbb{Z}[t]$ is the Poincar\'e polynomial of the flag variety of $L$. 
Recall that for a type $\tau=[L,\rho]$, the polynomial $m_\tau(t)$ is defined by \[
 m_\tau(t):= t^{|\Phi^+(G)| - |\Phi^+(L)|} \frac{|f_L(t)|}{\tilde{\rho}(1)}\in \mathbb{Q}[t]. 
 \]
 
 \begin{defe} 
Define the rational function 
\[
f_X(t):=\frac{f_Z(t)^2}{f_T(t)} \sum_{\tau=[L, \rho]\in \cT(G)} \dim(\rho) |[L]| \,m_\tau(t)^{2g-1}\in \mathbb{Q}(t). 
\]
\end{defe} 

\subsubsection{Conclusion of the proof} We have seen that there exists a positive integer $r$ such that  
\[
|X(\mathbb{F}_{q^{rm}})| = f_X(q^{rm}),\qquad \forall \, m\in \mathbb{Z}_{>0}. 
\]
Thus, $f_X(q^{rm})$ is an integer for all $m>0$. An elementary argument (cf.  \cite[Lemma 2.11]{GoodwinRohrle}) then implies that $f_X(t)$ is a polynomial. Thus, $X\times_{\Fq} \mathbb{F}_{q^r}$ is polynomial count with counting polynomial $f_X$. This concludes the proof of Theorem \ref{t:countX} in the single punctured case.  
\qedhere

\subsection{Proof of Theorem \ref{t:countX} in the multi-punctured case} \label{s:countXPrecise} Let $\bX$ be a character variety associated to an $n$-punctured surface group satisfying Assumption \ref{a:mainAss}. Define the rational function 
\begin{equation} \label{eq:f_X}
f_X(t):=\frac{|W|^{n-1} f_Z(t)^2}{f_T(t)^n} \sum_{\tau=[L, \rho]\in \cT(G)} \frac{\dim(\rho)^n  |[L]| \,m_\tau(t)^{2g-2+n}}{|W(L)|^{n-1}}\in \mathbb{Q}(t). 
\end{equation} 
Our goal is to show that  
\[
|X(\mathbb{F}_{q^{rm}})| = f_X(q^{rm}),\qquad \forall \, m\in \mathbb{Z}_{>0}. 
\]

\subsubsection{} 
As in the once-punctured case, we have
\[
|X^F| = \frac{|Z^F|}{|T^F|^n} \sum_{\tau=[L,\rho]\in\cT(G)} m_\tau(q)^{2g-2+n} S_\tau(q),
\]
where $S_\tau$ is now defined as
\[
S_\tau(q) := \sum_{\chi\in\Irr(G^F)_\tau} \chi(S_1)\cdots\chi(S_n).
\]

\subsubsection{} Proceeding as in the once-punctured case, we compute
\[
S_\tau(q) = \frac{|W(L)|}{|N_W(W(L))|} 
\sum_{\substack{\theta\in \check{T}^F \\ W_\theta = W(L)}} 
\prod_{i=1}^n \chi_{\theta,\rho}(S_i)
= \frac{|W(L)|}{|W|} |[L]| \sum_{\substack{\theta\in \check{T}^F \\ W_\theta = W(L)}} 
\prod_{i=1}^n \chi_{\theta,\rho}(S_i).
\]
We already computed 
\[
\chi_{\theta,\rho}(S_i) = \frac{\dim(\rho)}{|W_\theta|} \sum_{w\in W} \theta(w\cdot S_i).
\]
Plugging this expression for $\chi_{\theta,\rho}(S_i)$ into the one for $S_\tau$ yields
\[
S_\tau(q) = \frac{\dim(\rho)^n\, |[L]|}{|W|\, |W(L)|^{n-1}} \sum_{\underline{w} \in W^n} \alpha_{L,\underline{w}\cdot\underline{S}}(q),
\]
where $\underline{w} := (w_1,\ldots,w_n)\in W^n$ and $\underline{w}\cdot\underline{S} := (w_1\cdot S_1)\cdots(w_n\cdot S_n)\in T^F$. 

\subsubsection{} We have seen that there exists a positive integer $r$ such that 
\[
S_\tau(q^r) = \frac{|W|^{n-1}}{|W(L)|^{n-1}} \dim(\rho)^n\, |[L]|\, \nu(L)\, |Z^{F^r}|.
\]
This means we have
\begin{equation}\label{eq:mainFormula}
|X^{F^r}| = \frac{|W|^{n-1}|Z^{F^r}|^2}{|T^{F^r}|^n} 
\sum_{\tau=[L,\rho]\in\cT(G)} 
\frac{\dim(\rho)^n\, |[L]|\, m_\tau(q^r)^{2g-2+n}}{|W(L)|^{n-1}} 
= f_X(q^r).
\end{equation}
This continues to hold if $r$ is replaced by $rm$ for any positive integer $m$. The same reasoning as in the single punctured case completes the proof of Theorem \ref{t:countX}.
\qed

\section{Topological implications} \label{s:topology} The aim of this section is discuss the topological implications of our expression for the counting polynomial of generic character varieties and prove Corollary \ref{c:X}. 
We continue using the notation of the previous section.

\subsection{Palindromic property} \label{s:palindromic} We start by recalling some facts about Alvis--Curtis duality; see \cite[\S 7.2]{DM20} for further details. 

\subsubsection{} Alvis--Curtis duality is (up to a sign) an involution 
\[
\mathfrak{D}: \Irr(G^F)\ra \Irr(G^F).
\]
To compute the dimension of the irreducible character $\mathfrak{D}(\chi)$, we need to replace $t$ by $t^{-1}$ in the polynomial $d_\chi(t)\in \mathbb{Q}[t]$ encoding the dimension of $\chi$. More precisely, 
\[
\dim \mathfrak{D}(\chi)=t^{|\Phi^+|} d_\chi (t^{-1}). 
\]

\subsubsection{} 
It is known that $\mathfrak{D}$ restricts to an involution on the set of irreducible constituents of $R_T^G\, \theta$; namely, it sends the irreducible constituent corresponding to $\rho \in \Irr(W_\theta)$ to the constituent corresponding to $\epsilon \rho$, where $\epsilon$ is the sign character of $W_\theta$. In particular, we see that $\mathfrak{D}$ sends an irreducible character of type $[L, \rho]$ to one of type $[L, \epsilon \rho]$.

\subsubsection{} 
Now, observe that 
\[
f_Z(t^{-1}) = (-1)^{\dim(Z)} t^{-\dim(Z)} f_Z(t),\qquad f_T(t^{-1}) = (-1)^{\dim(T)} t^{-\dim(T)} f_T(t),
\]
and
\[
f_L(t^{-1}) = (-1)^{\dim(T)} t^{-\dim(T)-3|\Phi^+(L)|} f_L(t).
\]
Moreover, we have 
\[
m_\tau(t^{-1}) = (-1)^{\dim(T)} t^{-\dim(T)-|\Phi(G)|} m_{\tau'}(t),
\]
where $\tau' = [L,\epsilon\rho]$. Thus, we conclude: 
\[
f_X(t^{-1}) = t^{-\deg(f_X)} \frac{|W|^{n-1}f_Z(t)^2}{f_T(t)^n} \sum_{\tau=[L,\rho]\in\cT(G)} \frac{
\dim(\rho)^n  |[L]| \,m_{\tau'}(t)^{2g-2+n}
}{
|W(L)|^{n-1}
}.
\]
Since the map $\tau=[L,\rho]\mapsto \tau'=[L,\epsilon\rho]$ is an involution, we conclude $t^{\deg(f_X)}f_X(t^{-1})$ equals $f_X(t)$; i.e., $f_X$ is a palindrome. \qed

\subsection{Number of connected components}  
The approach here is very similar to \cite[Corollary 5]{BK22}, where it is shown that the number of components of the character stack associated to a \emph{compact} surface group is $|\pi_0(Z(\check{G}))|$.  Thus, we only sketch the main idea.  

Since $X$ is smooth and equidimensional, connected components and irreducible components coincide. Thus, the number of connected components equals the leading coefficient of the $E$-polynomial. By the above discussions, the $E$-polynomial of $\overline{X}$ equals $f_X$. Thus, our goal is to determine the leading term of $f_X$. 

Following the same analysis as \cite[Corollary 5]{BK22}, one can show that  the only type contributing to the leading term of $f_X$ is the $[G, 1]$. This is the type corresponding to one-dimensional (principal series) representations of $G(\Fqr)$; i.e. representations of the form 
\[
G(\Fqr) \ra G(\Fqr)^{\mathrm{ab}} \ra \bCt.
\]
The number of such representations is 
\[
|G(\Fqr)^{\mathrm{ab}}| = |Z(\check{G}(\Fqr))| = |\pi_0(\check{Z}(G))(\Fqr)|\times q^{r\dim(Z)}. 
\]
As $q$ is  coprime to $|\pi_0(\check{Z}(G))|$ and $r$ is chosen so that $|\pi_0(\check{Z}(G))(\Fqr)|=|\pi_0(\check{Z}(G))|$,  we conclude that the leading coefficient of $f_X$ is 
\[
|\pi_0(Z(\check{G}))(\Fqr)|=|\pi_0(Z(\check{G}))|. 
\]
\newpage

\subsection{Euler characteristic} \label{s:EulerX} Note that $E(\overline{X})=f_X(1)$. 
We have three cases: 

\subsubsection{Genus $g=0$}  The formula \ref{eq:mainFormula} implies 
\[
f_X(1) = \frac{1}{(2r)!} \frac{d^{2r}}{dq^{2r}} \bigg|_{q=1} \xi(q),
\]
where $r:=\dim(T)-\dim(Z)$ is the semisimple rank of $G$ and $\xi$ is the rational function
\[
\xi(q) := q^{|\Phi^+|(n-2)} \sum_{L} \bigg(\frac{|W|}{|W(L)|}\bigg)^{n-1} \nu(L) \sum_{\rho} \dim(\rho)^n \bigg(\frac{P_{W(L)}(q)}{\dim(\tilde{\rho})}\bigg)^{n-2}.
\]
Here, the first sum is over all standard endoscopy groups of $G$, and the second sum is over all irreducible characters of $W(L)$.

\subsubsection{Genus $g=1$ and trivial centre}
The formula \eqref{eq:mainFormula} implies
\[
f_X(1) = |W|^{n-1} \sum_{L} |W(L)|\, |\Irr(W(L))|\, \nu(L),
\]
where the sum is over all standard endoscopy groups of $G$.

\subsubsection{Genus $g>1$ or $g=1$ and non-trivial centre} We shall prove that $f_X(1)=0$, thus establishing Corollary \ref{c:X} (c). By Remark \ref{r:divide}, the polynomial $f_T(t)$ divides $m_\tau(t)$. This implies that if $g>1$, then $f_T$ divides $f_X$; hence $f_X(1)=0$. 
It also implies that if $g=1$, then $f_Z$ divides $f_X$. Since $Z$ is non-trivial and connected, it follows that $f_Z(1)=0$. Thus, $f_X(1)=0$.

\section{Counting points on additive character varieties}\label{s:countingY} \label{s:countY} In this section, we count points on the additive character variety introduced in \S \ref{s:additive} and prove Theorem \ref{t:countY} and Corollary \ref{c:Y}.  
For the ease of notation, we  assume first that we have a single puncture and mention the modifications necessary for the multi-punctured case at the end of the section. 

\subsubsection{} 
We continue using the notation of the previous section and let $\fg:=\mathrm{Lie}({G})$ and $\ft:=\mathrm{Lie}(T)$.  Let $H\in \ft^F$ be a generic  regular element and $A$ (resp. $Y$) the additive representation variety (resp. additive character variety) associated to the $G$-orbit of $H$ and the once-punctured surface group. By the additive analogue of the Frobenius Mass Formula (Theorem \ref{t:additiveFrob}), we have 
\begin{equation}\label{eq:SumY}
|\bY^F|=  \frac{|Z^F||\fg^F|^{g-1}}{|G^F|} \sum_{x\in \fg^F} |\fg_{x}^F|^{g} \mathcal{F}(1_{H}^G)(x),
\end{equation}
To proceed further, we need to recall some results on the Fourier transforms of invariant functions on $\fg^F$.

\subsection{Recollections on invariant functions}  \label{s:invariant} 
Fix a non-trivial additive character $\psi: \mathbb{F}_q\ra \bCt$ and a  non-degenerate $G$-invariant symmetric bilinear form $\kappa: \fg^F\times \fg^F\ra \mathbb{F}_q$. Since we have assumed that the ground characteristic is very good for $G$, such an invariant form exists, cf. \cite[Proposition 2.5.12]{Letellier05}.   Note that the restriction $\kappa|_{\ft}$ of $\kappa$ to $\ft^F$ is  also non-degenerate.

\subsubsection{Green function} The Green function $Q_T^G$ is the function from unipotent elements of $G^F$ to $\mathbb{Z}$ defined by
\[
Q_{T}^{G}(u):=\Big(\mathrm{Ind}_{B^F}^{G^F} 1\Big) (u)=|\mathcal{B}_u^F|.  
\]
Here, $\mathcal{B}_u\subseteq G/B$ denotes the Springer fibre associated to the unipotent element $u\in G^F$. Clearly, $Q_T^G(u)$ depends only on the $G$-orbit of $u$.

\subsubsection{Springer isomorphism} 
Since $\mathrm{char}(k)$ is very good for $G$, 
a theorem of Springer states that there exists  a $G$-equivariant isomorphism $\varpi$ from the nilpotent cone of $\fg$ to the unipotent variety of $G$, cf. \cite[\S 2.7.5]{Letellier05}. 

\subsubsection{} Let $\bC[\fg^F]^{G^F}$ denote the space of $G^F$-invariant functions on $\fg^F$. For each $x\in \fg^F$, let $1_x^G$ denote the characteristic function of the adjoint orbit $G^F\cdot x\subseteq \fg^F$. The set 
\[
{\{1_x^G\, |\, x\in \fg^F/G^F\}}
\] 
is a basis of $\bC[\fg^F]^{G^F}$. One can think of this set as the additive analogue of the set of irreducible complex characters of $G^F$.  The Fourier transform $ \mathcal{F}:\bC[\fg^F]^{G^F}\ra \bC[\fg^F]^{G^F}$ is defined by 
\[
\mathcal{F}(\phi)(x):=\sum_{y\in \fg^F} \psi(\kappa(x,y))\phi(y). 
\]

\subsubsection{}   We write $x=x_s+x_n$ for the Jordan decomposition of $x\in \fg^F$. Following \cite{Letellier05}, the Harish-Chandra induction map 
\[
{R_{\ft}^{\fg}: \bC[\ft^F]\ra \bC[\fg^F]^{G^F}}
\]
is defined by 
\[
R_{\ft}^{\fg} (f)(x) = \frac{1}{|G_{x_s}^F|} \sum_{\{g\in G^F\, | \, g\cdot x_s\in \ft^F\}} Q_{T}^{G_{x_s}}(\varpi(x_n)) f(g\cdot x_s). 
\]
Here, $G_{x_s}$ is the centraliser of $x_s$ in $G$.

\begin{defe} 
We call $x\in \fg^F$ \emph{split} if $x_s$ is $G^F$-conjugate to an element of $\ft^F$. 
\end{defe} 
It follows immediately from the definition that if $R_{\ft}^{\fg}(f)(x)$ is non-zero, then $x$ is split.

\subsection{A theorem of Kazhdan and Letellier} 
  Let $f_H: \ft^F\ra \bC$ denote the character $\psi(\kappa|_{\ft}(-,H))$. The following result was proved by Kazhdan when $\mathrm{char}(k)$ is large. It was proved by Letellier that the characteristic being very good is sufficient, cf. \cite[Theorem 7.3.3]{Letellier05}. 

\begin{thm} Suppose $H\in \ft^F$ is regular and $x\in \fg^F$. Then,  
\[
\mathcal{F}(1_{H}^G)(x) =q^{|\Phi^+|} R_{\ft}^{\fg}(f_H)(x).
\] 
\label{t:KL}
\end{thm}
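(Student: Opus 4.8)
The plan is to establish the theorem of Kazhdan--Letellier by reducing the computation of the Fourier transform $\mathcal{F}(1_H^G)$ to a computation on the torus, where everything becomes explicitly summable. First I would exploit the fact that $H \in \ft^F$ is regular, so its centraliser $G_H$ equals $T$; hence the adjoint orbit $\cO_H = G^F.H$ fibres over $G^F/T^F$ with finite fibres, and $1_H^G$ can be written as an average over $G^F$ of the translates of $1_H^T$ (the characteristic function of $H$ inside $\ft^F$), up to the index $[G^F : T^F]$. Equivalently, $1_H^G = \mathrm{Ind}$ of $1_H^T$ in the appropriate normalization. The upshot is that the Fourier transform on $\fg^F$ of an induced function should be expressible via the Fourier transform on $\ft^F$ of $1_H^T$, which is exactly the additive character $f_H = \psi(\kappa|_{\ft}(-, H))$, twisted by the Deligne--Lusztig/Green-function machinery that governs how characteristic functions of nilpotent orbits Fourier-transform.

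The key technical input is the additive analogue of the Deligne--Lusztig character formula: the Fourier transform intertwines Harish-Chandra induction $R_\ft^\fg$ from $\ft^F$ to $\fg^F$ with (a normalised version of) parabolic/Harish-Chandra induction, up to the factor $q^{|\Phi^+|}$ coming from the unipotent radical. Concretely, the strategy is: (1) decompose $x = x_s + x_n$ by Jordan decomposition and observe that $\mathcal{F}(1_H^G)(x)$, being a $G^F$-invariant function, only depends on the orbit of $x$; (2) use the behaviour of the Fourier transform under the decomposition $\fg = \fg_{x_s} \oplus (\text{complement})$ and the fact that $H$ is regular to localise the sum $\sum_{y \in \fg^F} \psi(\kappa(x,y)) 1_H^G(y)$ to those $y$ conjugate to $H$; (3) reorganise this sum as a sum over $\{g \in G^F : g.x_s \in \ft^F\}$ of Green-function values $Q_T^{G_{x_s}}(\varpi(x_n))$ weighted by $\psi(\kappa|_\ft(g.x_s, H)) = f_H(g.x_s)$, which is precisely $q^{|\Phi^+|} R_\ft^\fg(f_H)(x)$ after dividing by $|G_{x_s}^F|$. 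Step (2)--(3) is where one invokes Letellier's analysis \cite[Theorem 7.3.3]{Letellier05} that the very-good-characteristic hypothesis suffices to run Kazhdan's original argument, which itself rests on the Springer isomorphism $\varpi$ (available since $\mathrm{char}(k)$ is very good) to transport nilpotent data to unipotent data where Green functions live.

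The main obstacle I expect is controlling the ``mixed'' term: when $x$ is not semisimple, one must understand how the Fourier transform over $\fg^F$ interacts with the nilpotent part $x_n$ living in the centraliser $\fg_{x_s}$, and show that it produces exactly the Green function $Q_T^{G_{x_s}}(\varpi(x_n))$ rather than some more complicated Deligne--Lusztig virtual character. This is the heart of Kazhdan's theorem and is genuinely nontrivial; it requires knowing that the Fourier transform of the characteristic function of a regular semisimple orbit, restricted to a Levi/centraliser, stays within the span of Green functions, and it is precisely here that the characteristic hypotheses (very good, $q$ large enough per Convention \ref{con:Y}) are used to guarantee polynomiality and the good behaviour of Springer fibres. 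Since this is cited wholesale from \cite{Letellier05}, in the actual proof I would simply invoke that reference after setting up the reduction to the torus and verifying that our hypotheses ($\mathrm{char}(k)$ very good, $Z$ connected, $\kappa$ non-degenerate with $\kappa|_\ft$ non-degenerate, $H$ regular) match its hypotheses; the only real work on our side is checking this compatibility and the normalisation constants.
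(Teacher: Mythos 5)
Your proposal matches the paper's approach: the paper does not prove this theorem at all, but simply cites it as \cite[Theorem 7.3.3]{Letellier05} (due to Kazhdan for large characteristic, extended by Letellier to very good characteristic), exactly as you conclude in your final paragraph. Your preliminary sketch of what lies behind the citation---Jordan decomposition, commutation of the Fourier transform with Harish--Chandra induction up to the factor $q^{|\Phi^+|}$, and the emergence of Green functions via the Springer isomorphism $\varpi$---is a reasonable summary of the cited argument, but the paper itself supplies none of it and leans entirely on the reference.
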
 

\begin{cor} If $H\in \ft^F$ is regular and $x\in \fg^F$, then $\mathcal{F}(1_{H}^G)(x) \neq 0$ only if $x$ is split.
\end{cor}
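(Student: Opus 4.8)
The statement to be proved is the corollary following Theorem~\ref{t:KL}: if $H\in\ft^F$ is regular and $x\in\fg^F$, then $\mathcal{F}(1_H^G)(x)\neq 0$ only if $x$ is split. The plan is to deduce this immediately from Theorem~\ref{t:KL} together with the observation already recorded in the text, namely that $R_\ft^\fg(f)(x)$ vanishes unless $x$ is split.

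First I would invoke Theorem~\ref{t:KL}, which gives the exact identity $\mathcal{F}(1_H^G)(x)=q^{|\Phi^+|}R_\ft^\fg(f_H)(x)$ for $H$ regular. Since $q^{|\Phi^+|}$ is a nonzero scalar, $\mathcal{F}(1_H^G)(x)\neq 0$ forces $R_\ft^\fg(f_H)(x)\neq 0$. Next I would recall the defining formula
\[
R_\ft^\fg(f)(x)=\frac{1}{|G_{x_s}^F|}\sum_{\{g\in G^F\,|\,g.x_s\in\ft^F\}}Q_T^{G_{x_s}}(\varpi(x_n))\,f(g.x_s),
\]
and note that the index set of the sum, $\{g\in G^F\mid g.x_s\in\ft^F\}$, is empty precisely when $x_s$ is not $G^F$-conjugate to an element of $\ft^F$, i.e.\ precisely when $x$ is not split. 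Hence if $x$ is not split the sum is empty and $R_\ft^\fg(f_H)(x)=0$, contradicting $R_\ft^\fg(f_H)(x)\neq 0$. Therefore $x$ must be split, which is the claim.

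There is essentially no obstacle here: the corollary is a formal consequence of Theorem~\ref{t:KL} (the substantive input, due to Kazhdan and Letellier) and the elementary remark about the support of Harish-Chandra induction, which the text has already stated immediately after the definition of $R_\ft^\fg$. The only point worth a word of care is that $f_H$ is the specific character $\psi(\kappa|_\ft(-,H))$ on $\ft^F$, so that $R_\ft^\fg(f_H)$ is indeed an instance of the general construction and the support statement applies verbatim; this is immediate from the definitions. Thus the proof is one line: combine Theorem~\ref{t:KL} with the observation that $R_\ft^\fg(f)(x)=0$ whenever $x$ is not split.
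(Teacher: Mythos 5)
Your proof is correct and follows exactly the route the paper intends: apply Theorem~\ref{t:KL} to reduce to the vanishing of $R_\ft^\fg(f_H)(x)$, then use the support observation recorded immediately after the definition of $R_\ft^\fg$, namely that $R_\ft^\fg(f)(x)=0$ when $x$ is not split because the indexing set $\{g\in G^F\mid g.x_s\in\ft^F\}$ is empty. The paper treats this corollary as immediate and gives no separate argument, so there is nothing further to add.
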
 

These results can be considered as additive analogues of Theorem \ref{t:DL} and Corollary \ref{c:regular}.

\subsubsection{} 
 Let $(\fg^F)^\circ$ denote the set of split elements of $\fg^F$. The above discussions imply
\[
|\bA^F| = |\fg^F|^{g-1}  \sum_{x\in (\fg^F)^\circ}|\fg_{x}^F|^{g} \mathcal{F}(1_{H}^G)(x).
\]

\subsubsection{} Now, let $x\in (\fg^F)^\circ$ be a split element. Then  $x_s$ is $G^F$-conjugate to an element $t\in \ft^F$, well-defined up to $W$-conjugation. So, the centraliser $G_{x_s}$ is a standard Levi subgroup of $G$, well-defined up to the action of $W$. Note that since $x_n$ and $x_s$ commute, we have $x_n\in \fg_{x_s}$; thus, we can consider the orbit $G_{x_s}^F\cdot x_n\subseteq \fg_{x_s}^F$.

\subsubsection{$\fg$-types}\label{sss:fgtypes}
Consider the set of pairs $(L,\cN)$, where $L$ is a standard Levi subgroup of $G$ and $\cN$ is the $L^F$-orbit of a nilpotent element of $\fl^F$, where $\fl=\mathrm{Lie}(L)$. Note that $W$ acts on this set by conjugation. Let $\cT(\fg)$ denote the set of $W$-orbits. We call the elements of $\cT(\fg)$ \emph{$\fg$-types} and denote them by $\tau=[L,\cN]$. By the above discussion, we have a map 
\[
(\fg^F)^\circ\ra \cT(\fg).
\]
This is the additive analogue of the map \eqref{eq:typeMap}. 
\subsubsection{} 
 The following proposition implies that $\cT(\fg)$ is a finite set and independent of the ground field: 
 
\begin{prop}There exists a finite set, independent of the finite field $\mathbb{F}_q$ (of good characteristic), parameterising nilpotent $L^F$-orbits in $\fl^F$. 
\end{prop}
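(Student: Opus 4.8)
The plan is to reduce the statement about nilpotent orbits in $\fl^F$ to the analogous statement about the full algebra $\fg^F$, and then invoke the classification of nilpotent orbits together with a Frobenius-invariance argument. First I would observe that if $L$ is a standard Levi subgroup of $G$, then $L$ (and hence $\fl$) is again a connected reductive group over $k=\overline{\Fq}$ with an induced $F$-action, and the characteristic remains good for $L$ since $\Phi(L)\subseteq\Phi(G)$; so it suffices to prove the claim for a single connected reductive group, say $\fg$ itself, with its Frobenius $F$, and then apply this to each of the finitely many standard Levi subgroups.

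The key input is the theory of nilpotent orbits in good characteristic. Over the algebraically closed field $k$, the group $G$ acts on the nilpotent cone $\cN(\fg)$ with finitely many orbits, and — crucially — in good characteristic this set of orbits is in natural bijection with the set of nilpotent orbits in the corresponding complex Lie algebra (via the Bala--Carter classification, which is characteristic-independent, attaching to each orbit a pair consisting of a Levi subgroup and a distinguished parabolic in its derived group). In particular the combinatorial parameter set $\mathrm{Nilp}(\fg)$ of geometric nilpotent orbits depends only on the root datum of $G$, not on $q$ or even on $p$ (among good primes). Now $F$ permutes the geometric nilpotent orbits, but since each such orbit is determined by the characteristic-independent Bala--Carter datum and $F$ acts on the root datum through a (trivial, in the untwisted split case) diagram automorphism, $F$ fixes every geometric nilpotent orbit. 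Each $F$-stable orbit $\cO$ is a single smooth connected variety on which $G$ acts transitively with connected stabiliser only up to component group; by Lang's theorem the $G^F$-orbits inside $\cO^F$ are parameterised by the $F$-conjugacy classes in the component group $A(\cO)=\pi_0(G_u)$ of a centraliser, which is again a finite group independent of $q$ (its isomorphism type being read off from the Bala--Carter datum). Hence the set of $G^F$-orbits in $\cN(\fg)^F$ is parameterised by $\coprod_{\cO}\, H^1(F,A(\cO))$, a finite set whose description is uniform in $q$.

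Concretely, I would argue as follows. By \cite[Proposition 2.5.12]{Letellier05} or standard references, in good characteristic the partition of $\cN(\fg)(k)$ into $G$-orbits, and the component groups $A(\cO)$, are as in the complex case; fix this finite combinatorial list. Each orbit $\cO$ is $F$-stable (untwisted, split case), so $\cO^F\neq\emptyset$ by Lang--Steinberg, and the $G^F$-orbits in $\cO^F$ biject with $F$-twisted conjugacy classes in $A(\cO)$, cf.\ \cite[\S 2.6]{GM20}; since $F$ acts on $A(\cO)$ through a diagram automorphism determined by the root datum, this parameterising set is independent of $q$. Taking the (finite, $q$-independent) union over $\cO$ gives the desired finite parameterising set for nilpotent $G^F$-orbits in $\fg^F$, hence, applied to each standard Levi $L$, for nilpotent $L^F$-orbits in $\fl^F$. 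The main obstacle is making precise — and correctly citing — the claim that the \emph{geometric} classification of nilpotent orbits and their component groups is genuinely characteristic-free in good characteristic (this is where one must be careful about bad primes, which are excluded by hypothesis), and checking that $F$ indeed fixes each geometric orbit in the untwisted case; once these two points are granted, the Lang--Steinberg bookkeeping is routine.
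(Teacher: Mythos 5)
Your proof takes essentially the same route as the paper's: a characteristic-free classification of geometric nilpotent orbits in good characteristic (you via Bala--Carter, the paper via Premet's result that nilpotent orbits in $\fl$ biject with those in $\fl(\bC)$), followed by a Lang--Steinberg reduction of rational orbits in each $F$-stable geometric orbit to $F$-conjugacy classes in the component group $\pi_0(L_x)$. The only substantive difference is in the last step, where the paper invokes its Corollary \ref{c:etale} (Herpel's smoothness of centralizers in pretty good characteristic) to conclude $p \nmid |\pi_0(L_x)|$ and hence $q$-independence, whereas you appeal more directly to the characteristic-independence of the component groups as read off from the Bala--Carter datum; both routes are valid and lead to the same finite parameterising set.
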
 

\begin{proof} First of all, since $p$ is assumed to be good for $\fg$, it is also good for $\fl$. In this case,  it is known that nilpotent orbits in $\fl$ are in bijection with nilpotent orbits in $\fl(\bC)$, cf. \cite{Premet03}. Next, for any element $x\in \fl$, consider the orbit $L\cdot x$. The Frobenius fixed point $(L\cdot x)^F$ splits into $N_x$ many $L^F$-orbits, where $N_x:=|H^1(F,\pi_0(L_x))|$, cf. \cite[Proposition 4.2.14]{DM20}. Finally, observe that $N_x$ is the number of $F$-conjugacy classes in $\pi_0(L_x)$. By Corollary \ref{c:etale}, $p\nmid |\pi_0(L_x)|$. Thus, the number of $F$-conjugacy classes in $\pi_0(L_x)$ is independent of $k$. This concludes the proof of the proposition. 
\end{proof} 

One can think of the above proposition as the additive analogue of the fact that unipotent representations of $G^F$ have a characteristic free description.

\subsubsection{} Let $\tau=[L,\cN]$ be a type. Choose $n\in \cN$ and let 
\[
d(\tau):=\dim(C_L(n)). 
\] 
Clearly, this is independent of the choice of $n\in \cN$.  The following is the additive analogue of Proposition  \ref{p:mtau}:
\begin{lem} Let  $x\in (\fg^F)^\circ$ be an element of type $\tau=[L,\cN]$. Then 
\[
\dim(\fg_x)=d(\tau). 
\]
\end{lem}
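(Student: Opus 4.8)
The plan is to compute $\dim(\fg_x)$ directly from the Jordan decomposition $x=x_s+x_n$. Since $x\in(\fg^F)^\circ$ has type $\tau=[L,\cN]$, by definition we may (after $G^F$-conjugation, which does not change the dimension of the centraliser) assume that $x_s\in\ft^F$ with $G_{x_s}=L$ a standard Levi subgroup, and that $x_n\in\fl^F$ is a nilpotent element whose $L^F$-orbit is $\cN$, so $\dim(L.x_n)=d(\cN)$. The first step is the standard fact that, because $x_s$ and $x_n$ commute, $\fg_x=\fg_{x_s}\cap\fg_{x_n}$, and moreover $\fg_{x_s}=\mathrm{Lie}(G_{x_s})=\fl$ (here I would invoke that $p$ is very good for $G$, so centralisers of semisimple elements are smooth and connected, cf.\ \cite[Theorem 3.14]{Steinberg75}, and hence $\dim\fg_{x_s}=\dim G_{x_s}=\dim L$). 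Since $x_n\in\fl$, its centraliser in $\fg_{x_s}=\fl$ is just $\fl_{x_n}$, the centraliser of $x_n$ inside the Lie algebra of the Levi $L$. Thus $\fg_x=\fl_{x_n}$.

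The second step is to compute $\dim\fl_{x_n}$. Because $p$ is very good for $G$, it is very good — in particular good — for $L$, so $L_{x_n}$ is smooth and $\dim\fl_{x_n}=\dim L_{x_n}$. By the orbit–stabiliser relation in the algebraic group $L$ acting on $\fl$ by the adjoint action, $\dim L_{x_n}=\dim L-\dim(L.x_n)=\dim L-d(\cN)=d(\tau)$. Combining the two steps gives $\dim\fg_x=\dim\fl_{x_n}=d(\tau)$, as claimed. It remains to note that this quantity is genuinely independent of all the choices made: passing to a $G^F$-conjugate replaces $(x_s,x_n)$ by a $G$-conjugate pair and leaves $\dim\fg_x$ unchanged; choosing a different representative $n\in\cN$ changes $x_n$ within a single $L$-orbit and hence does not change $d(\cN)=\dim(L.n)$; and replacing the $W$-representative $L$ of $[L]$ by a conjugate $w.L$ conjugates everything by $w$, again preserving dimensions. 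So $d(\tau)$ depends only on the type $\tau$.

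The only real subtlety — and the step I would flag as the place requiring care rather than the place where the argument is hard — is the passage $\fg_x=\fg_{x_s}\cap\fg_{x_n}$ together with the identification $\fg_{x_s}=\fl$. In bad characteristic this can fail (centralisers need not be smooth or connected, and Lie-algebra centralisers can be strictly larger than the Lie algebra of the group centraliser), which is exactly why the very-good-prime hypothesis is essential; I would cite Corollary \ref{c:etale} and \cite[Theorem 3.14]{Steinberg75} to dispose of it. Everything else is the orbit–stabiliser formula applied inside the reductive group $L$, and the verification that the numerical answer is well defined on $W$-orbits of pairs. I expect the whole argument to be short.
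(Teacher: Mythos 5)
Your proof is correct and follows essentially the same route as the paper: reduce to $\fg_x = C_{\fg_{x_s}}(x_n)$, identify $\fg_{x_s}$ with $\fl$ and compare Lie-algebra centralisers with group centralisers via smoothness in very good characteristic, then conclude by orbit--stabiliser in $L$. One minor imprecision worth flagging: the inclusion $\fg_x\subseteq\fg_{x_s}\cap\fg_{x_n}$ does not follow merely from $x_s$ and $x_n$ commuting (that only gives the reverse inclusion) --- it relies on the fact that $x_s$ and $x_n$ can be written as polynomials in $x$ with zero constant term, which is the justification the paper actually gives.
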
 

\begin{proof} 
Observe that
$\fg_{x} = \fg_{x_s}\cap \fg_{x_n} = C_{\fg_{x_s}}(x_n)$. Indeed, if an element of $\fg$ commutes with $x$, then it must commute with $x_s$ and $x_n$, because these can be written as polynomials in $x$. 
Thus, we have
\[
\dim(\fg_x)=\dim(C_{\fg_{x_s}}(x_n)) = \dim(C_{G_{x_s}}(x_n)). \qedhere
\]
\end{proof}

\subsubsection{} In view of the above discussions, we can rewrite the formula for $|\bY^F|$ as follows
 \[
|\bY^F| = \frac{|Z^F||\fg^F|^{g-1}}{|G^F|} \sum_{\tau\in \cT(\fg)} q^{d(\tau)g} \sum_{x \in \fg^F_\tau} \mathcal{F}(1_{H}^G)(x),
\] 
where $\fg^F_\tau$ denotes the set of elements in $\fg^F$ of type $\tau$. 
For ease of notation, let 
\[
\boxed{
H_\tau(q):= \displaystyle \sum_{x \in \fg^F_\tau} \mathcal{F}(1_{H}^G)(x). 
}
\]
Then we see that determining $|\bY^F|$ is reduced to computing $H_\tau(q)$. Note that $H_\tau$ is the additive analogue of $S_\tau$ considered in \S \ref{s:Stau}.

\subsection{Evaluating the sum $H_\tau$} 
First, observe that $\mathcal{F}(1_{H}^G)$ is $G^F$-invariant; thus, instead of summing over all elements in $\fg^F_\tau$, we can sum over $G^F$-orbits $\mathfrak{O}_x\subseteq \fg^F$ of elements of $x\in \fg^F_\tau$. In other words,  
\[
H_\tau(q)= \displaystyle \sum_{\mathfrak{O} \in \fg^F_\tau/G^F} |\mathfrak{O}| \mathcal{F}(1_{H}^G)(\mathfrak{O}). 
\]
Next, we show that the size of the orbit $|\mathfrak{O}|$ depends only on the type of $\mathfrak{O}$: 
\begin{lem} If $\mathfrak{O}$ has type $\tau=[L,\cN]$, then 
$\displaystyle |\mathfrak{O}| =\frac{|G^F||\cN|}{|L^F|}$. 
\end{lem}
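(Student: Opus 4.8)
The plan is to compute the size of a $G^F$-orbit $\mathfrak{O}$ of type $\tau = [L, \cN]$ directly via the orbit-stabiliser formula, $|\mathfrak{O}| = |G^F|/|G_x^F|$, and to show that the stabiliser order $|G_x^F|$ decomposes according to the Jordan decomposition in a way that matches the claimed formula. Pick $x \in \mathfrak{O}$ with type $\tau$, so that $x_s$ is $G^F$-conjugate to some $t \in \ft^F$ with $G_{x_s} = L$ a standard Levi (up to $W$-conjugation and $G^F$-conjugation), and $x_n$ is a nilpotent element of $\fl^F$ whose $L^F$-orbit is $\cN$.

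**First I would** record the orbit-stabiliser identity $|\mathfrak{O}| = |G^F| / |G_x^F|$, and then use the identity $G_x = C_{G_{x_s}}(x_n)$ established in the preceding lemma (an element of $G$ commuting with $x$ commutes with both $x_s$ and $x_n$ since these are polynomials in $x$). Passing to Frobenius fixed points, $G_x^F = C_{L}(x_n)^F$ where I have used that $G_{x_s}$ is $G^F$-conjugate to $L$ (this conjugation is compatible with $F$ since $x$, hence $x_s$, is $F$-fixed). Now apply orbit-stabiliser again inside $L^F$ acting on the nilpotent orbit $\cN = L^F.x_n$: this gives $|\cN| = |L^F|/|C_L(x_n)^F| = |L^F|/|G_x^F|$, hence $|G_x^F| = |L^F|/|\cN|$. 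Substituting back yields $|\mathfrak{O}| = |G^F| \cdot |\cN| / |L^F|$, as claimed.

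**The one subtlety** to handle carefully is the passage between $G_{x_s}$ and the standard Levi $L$ at the level of $F$-fixed points: one must check that conjugating $x_s$ into $\ft^F$ can be done by an element $g \in G^F$, so that $g G_{x_s} g^{-1} = L$ with $g \in G^F$, making the identification $|G_{x_s}^F| = |L^F|$ and $|C_{G_{x_s}}(x_n)^F| = |C_L(g.x_n)^F|$ legitimate. This is exactly the definition of $x$ being \emph{split} (its semisimple part is $G^F$-conjugate into $\ft^F$), which holds since $\fg^F_\tau \subseteq (\fg^F)^\circ$ by construction of the type map. The only remaining care is that $\cN$, as a \emph{type}, is an $L^F$-orbit of nilpotents, so that after transporting by $g$ the element $g.x_n$ lies in $\fl^F$ and its $L^F$-orbit is the representative $\cN$ of the $W$-orbit $\tau$; since $|\cN|$ is visibly $W$-invariant (all representatives are $G^F$-conjugate hence equinumerous), no ambiguity arises.

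**I expect the main (mild) obstacle** to be bookkeeping around the double use of orbit-stabiliser — once for $G^F$ on $\fg^F$ and once for $L^F$ on the nilpotent orbit inside $\fl^F$ — and ensuring the centraliser identity $G_x^F = C_L(x_n)^F$ is applied with all the $F$-equivariance in place; the group theory itself is elementary once the split condition is invoked.
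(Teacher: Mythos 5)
Your proof is correct and follows essentially the same route as the paper's: apply orbit--stabiliser for $G^F$ acting on $\mathfrak{O}$, identify $G_x = C_{G_{x_s}}(x_n)$ via the compatibility of Jordan decomposition with the adjoint action, and then apply orbit--stabiliser again for $L^F$ acting on $\cN$ to get $|G_x^F| = |L^F|/|\cN|$. The paper writes out the $G_x = C_{G_{x_s}}(x_n)$ identity within this proof (the preceding lemma gives the Lie-algebra version $\fg_x = C_{\fg_{x_s}}(x_n)$), while you carry out the transport of $x_s$ into $\ft^F$ by an $F$-fixed element somewhat more explicitly; this is the split condition the paper leaves implicit, so your extra care is warranted rather than superfluous.
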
 
\begin{proof} 
Let $x=x_s+x_n\in \fg^F$.  Jordan decomposition is preserved under adjoint action; thus, for $g\in G$, $g\cdot x=g\cdot x_s+g\cdot x_n$ is the Jordan decomposition of $g\cdot x$. It follows that  $g\cdot x=x$ if and only if $g\cdot x_s=x_s$ and $g\cdot x_n=x_n$. In other words,  
\[
G_x=G_{x_s}\cap G_{x_n} = C_{G_{x_s}}(x_n).
\]
 Now if $x\in \fg^F$ has type $\tau=[L,\cN]$, then 
$C_{G_{x_s}}(x_n)=|L^F|/|\cN|$. Thus, if $\mathfrak{O}$ is the $G^F$-orbit of $x$, we conclude 
\[
|\mathfrak{O}| = \frac{|G^F|}{|G_{x}^F|}=\frac{|G^F|}{|C_{G_{x_s}}(x_n)^F|}=\frac{|G^F||\cN|}{|L^F|}. \qedhere
\]
\end{proof} 
The above lemma implies 
\[
H_\tau(q)= \frac{|G^F||\mathcal{N}|}{|L^F|} \displaystyle \sum_{\mathfrak{O} \in \fg^F_\tau/G^F} \mathcal{F}(1_{H}^G)(\mathfrak{O}). 
\]
To proceed further, we need to better understand the values of the Fourier transform. 

\subsubsection{} 
Recall that Theorem \ref{t:KL} states $\mathcal{F}(1_{H}^G)(x)=q^{|\Phi^+|} R_{\ft}^{\fg}(f_H)(x)$ and that $R_{\ft}^\fg$ is defined via a sum over the set 
\[
\mathcal{A}(x):=\{g\in G^F\, |\, g\cdot x_s\in \ft^F\}.
\]
 If we assume $x_s\in \ft^F$, then 
\[
\mathcal{A}(x) = \bigcup_{w\in W} \dot{w}G_{x_s}^F=\bigsqcup_{w\in W/W_{x_s}} \dot{w}G_{x_s}^F. 
\]
Therefore,  under this assumption, we have:
\[
R_{\ft}^{\fg}(f_H)(x) = 
\frac{Q_{T}^{G_{x_s}}(\varpi(x_n))}{|G_{x_s}^F|} \sum_{g\in \mathcal{A}(x)}  f_H(g\cdot x_s)=
\frac{Q_{T}^{G_{x_s}}(\varpi(x_n))}{ |W_{x_s}|} \sum_{w\in W} f_H(w\cdot x_s). 
\]

\subsubsection{} The above discussion implies that for a type $\tau=[L,\cN]$, we have  
\[
H_\tau=\frac{|G^F||\cN|}{|L^F|} \sum_{x \in \fg^F_\tau/G^F}  {q^{|\Phi^+|} } R_{\ft}^{\fg}(f_H)(x) = 
\frac{|G^F||\cN|}{|L^F|} \frac{ q^{|\Phi^+|}Q_{T}^{L}(\varpi(\cN))}{ |W(L)|} \sum_{w\in W} \, \, \sum_{x\in \fg^F_\tau/G^F} f_H(w\cdot x_s). 
\]
Here, $x$ is an orbit representative satisfying $x_s\in \ft^F$. 

\subsubsection{} The element $x_s$ is well-defined up to $W/W_{x_s}$-conjugacy, thus, we can now rewrite the above double sum as: 
\[
\sum_{w\in W} \sum_{x\in \fg^F_\tau/G^F}  f_H(w\cdot x_s) =\frac{|W(L)|}{|W|}
\sum_{w\in W} \sum_{{\substack{y\in \ft^F \\ [W_{y}]= [W(L)]}}}   f_H(w\cdot y) = 
 \frac{|W(L)|}{|N_W(W(L))|} \sum_{w\in W} \sum_{{\substack{y\in \ft^F \\ W_y=W(L)}}} f_H(w\cdot y).
\]

\subsubsection{} For ease of notation, let 
\[
\beta_{L,H}(q):= \sum_{\substack{y\in \ft^F\\ W_y=W(L)}} f_H(y). 
\]
In view of the fact that $|[L]|=|W/N_W(W(L))|$ (see \S \ref{s:Carter}), we conclude:  
\[
\boxed{H_\tau(q)= \frac{q^{|\Phi^+|}\, Q_{T}^{L}(\varpi(\cN))\, |\cN|\, |G^F|\, |[L]|}{|L^F|\, |W| } \sum_{w\in W} \beta_{L,w\cdot H}(q).}
\]
Thus, the problem of computing $H_\tau$ reduces to evaluating $\beta_{L,H}$. (Note that $\beta_{L,H}$ is the additive analogue of $\alpha_{L, S}$ introduced in \S\ref{s:onepunctureStau}.)

\subsection{Computing $\beta_{L,H}$} Let 
\[
\beta_{L,H}^\supseteq:=  \sum_{\substack{t\in \ft^F \\ W_t\supseteq W(L)}} f_H(t) = \sum_{t\in (\ft^F)^{W(L)}} f_H(t).  
\]
Let $\mu$ denote the M\"{o}bius function on the poset of standard Levi subgroups of $G$ ordered by inclusion.\footnote{Note that this poset is not the same as the one used in the previous section, but we have abused notation and denoted their M\"obius functions by the same symbol.} M\"{o}bius inversion gives
\[
\beta_{L,H}= \sum_{L'\supseteq L} \mu(L,L') \beta_{L',H}^\supseteq
\]

\subsubsection{} We are reduced to computing the M\"{o}bius function and the functions $\beta_{L,H}^\supseteq$. The latter can be computed as follows.  Consider the canonical quotient map from $\ft^F$ to the group of $W(L)$-coinvariants  
\[
f_\fl: \ft^F\ra (\ft^F)^{W(L)} = \ft^F/\ft^F\cap [\fl^F, \fl^F].
\]
Then Pontryagin duality implies 
\[
\beta_{L,H}^\supseteq = 
\begin{cases} |\ft^F/\ft^F\cap [\fl^F,\fl^F]| & \textrm{if $H\in [\fl^F,\fl^F]$;}\\
0 & \textrm{otherwise}. 
\end{cases}
\]

\subsubsection{} Since $H$ is generic, we have that $H\in [\fl,\fl]$ if and only if $\fl=\fg$. Note that 
\[
|\ft^F/\ft^F\cap [\fg^F,\fg^F] |=  |Z(\fg)^F|. 
\]
 Thus, $\beta_{L,H}^\supseteq$ is non-zero only if $L=G$, in which case $\ft^F/(\ft^F\cap [\fl^F, \fl^F])=Z(\fg)^F=\mathrm{Lie}(Z)^F$. We conclude 
\[
\boxed{\beta_{L,H}=q^{\dim(Z)} \mu(L,G).}
\]
 This is the additive analogue of Proposition \ref{p:alpha}. Observe that $\beta_{L,H}$ is independent of $H$ (assuming $H$ is generic).

\subsection{Conclusion of the point count for once-punctured case}
We have seen that 
\[
|\bY^F| = \frac{|Z^F||\fg^F|^{g-1}}{|G^F|} \sum_{\tau \in \cT(\fg)} q^{gd(\tau)} H_\tau(q),
\]
where 
\[
H_\tau(q) = q^{|\Phi^+|+\dim(Z)}Q_{T}^{L}(\varpi(\cN)) |\cN| \mu(L,G)  |[L]| \frac{|G^F|}{|L^F|}.
\]

\subsubsection{} It is shown in \cite[Theorem 3.10]{GoodwinRohrle} that  (under the assumption that $Z$ is connected and $p$ is good for $G$) the Green function $Q_T^G(u)$ is a polynomial in $q$, unless $G$ has a factor of type $E_8$, in which case a quadratic base change may be necessary. We can rephrase this by saying that for every type $\tau=[L, \cN]$, there exists a polynomial $f_\tau \in \mathbb{Z}[t]$ such that the size of the Springer fibre $Q_{T}^{L}(\varpi(\cN))$ over the finite field $\mathbb{F}_{q^{2r}}$ equals $f_\tau(q^{2r})$ for all positive integers $r$.

\subsubsection{} Let $s_\tau \in \mathbb{Z}[t]$ denote the counting polynomial for $C_{L}(n)$ where $n\in \cN$. Let 
\[
\gamma_G:=  \dim(Z)+(g-1)\dim(\fg)+|\Phi^+|. 
\]
Define the rational function $f_Y$ by 
\[
f_Y(t):=f_Z(t)t^{\gamma_G} \sum_{\tau \in \cT(\fg)} t^{gd(\tau)}
 \frac{f_\tau(t)}{s_\tau(t)} \mu(L,G)  |[L]| \in \mathbb{Q}(t), 
\]
where $f_Z(t)= (t-1)^{\dim(Z)}$ is the counting polynomial for $Z$ as noted in \S\ref{sss:countingpoly}.
The above discussions imply 
\[
|Y(\mathbb{F}_{q^{2r}})| = f_Y(q^{2r}),\qquad \forall r\in \mathbb{Z}_{>0}. 
\]

\subsubsection{} By \cite[Lemma 2.11]{GoodwinRohrle}, $f_Y\in \mathbb{Q}[t]$. 
Thus, $Y\times_{\Fq} \mathbb{F}_{q^2}$ is polynomial count with counting polynomial $f_Y$.   This concludes the proof of Theorem \ref{t:countY} in the single punctured case.

\subsection{Multi-punctured case}\label{s:countYPrecise} 
Let $\bY$ be the additive character variety associated to an $n$-punctured surface group   and  $(H_1,\ldots, H_n)\in \ft^n$ where each $H_i$ is regular and the tuple is generic.  Then a similar analysis as in the once-punctured case gives 
\[
|\bY^F| =  \frac{|Z^F||\fg^F|^{g-1}}{|G^F|} \sum_{\tau \in \cT(\fg)} q^{gd(\tau)} H_\tau(q),
\]
where 
\[
H_\tau(q)=  q^{n|\Phi^+|+\dim(Z)}\frac{|G^F|}{|L^F|}|\cN|\Big(Q_{T}^{L}(\varpi(\cN))\Big)^n  \bigg(\frac{|W|}{|W(L)|}\bigg)^{n-1}   |[L]|\,  \mu(L,G).
\]

\subsubsection{} 
Now, let  
\[
\gamma_G :=  \dim(Z)+(g-1)\dim(\fg)+n|\Phi^+|. 
\]
Define the rational function $f_Y$ by 
\[
f_Y(t):=f_Z(t)t^{\gamma_G} \sum_{\tau \in \cT(\fg)}t^{gd(\tau)} \bigg(\frac{|W|}{|W(L)|}\bigg)^{n-1} 
 \frac{(f_\tau(t))^n}{s_\tau(t)} \mu(L,G)  |[L]| \in \mathbb{Q}(t). 
\]
Then 
\[
|Y(\mathbb{F}_{q^{2r}})| = f_Y(q^{2r}),\qquad \forall r\in \mathbb{Z}_{>0}. 
\]

\subsubsection{} The same argument as the previous subsection implies $Y\times_{\Fq} \mathbb{F}_{q^2}$  is polynomial count with counting polynomial $f_Y$. 
This concludes the proof of Theorem \ref{t:countY}.

\subsection{Connectedness} 
Our aim now is to show that $\overline{Y}$ is connected. We have shown that $\overline{Y}$ has $E$-polynomial equal to $f_Y$. Thus, it suffices to show that $f_Y$ is monic. The only $\fg$-type contributing to the leading term of $f_Y$ is $\tau_1=[G, 0]$. The coefficient of this term is determined by the Green polynomial $Q_T^G(1)$ because
\[
H_{\tau_1}=q^{n|\Phi^+|+\dim(Z)}Q_T^G(\varpi(0)).
\] The term $Q_T^G(\varpi(0))$ is the Poincar\'e polynomial of $G$ and is known to be monic. Thus, the polynomial $c(q):=\sum_{\tau \in \mathcal{T}(\fg)} q^{gd(\tau)}H_\tau(q)$ is monic. Now, 
\[
|Y(\mathbb{F}_{q^{2r}})|=\frac{|Z(\mathbb{F}_{q^{2r}} )| |\fg(\mathbb{F}_{q^{2r}})| }{|G(\mathbb{F}_{q^{2r}})|}c(q )
\] is the quotient of two monic polynomials, therefore it is also  monic.\qed

\subsection{Euler characteristic} \label{s:EulerY}
It follows from the explicit description of $f_Y$ that 
\[
f_Y(1) = \frac{1}{r!} \frac{d^{r}}{dq^{r}} \bigg|_{q=1} \zeta(q),
\]
where $r:=\dim(T)-\dim(Z)$ is the semisimple rank of $G$ and $\zeta$ is the rational function
\[
\zeta(q) := \frac{|\mathfrak{g}(k)|^{g-1}}{q^{|\Phi^+|}P_W(q)} \sum_{\tau\in\mathcal{T}(\mathfrak{g})} q^{g d(\tau)} H_\tau(q),
\]
where $P_W(q)$ is the Poincar\'e polynomial of $W$.
We use this formula to give examples of $\chi(\overline{Y})$ in the next section. 

\newpage 

\section{Examples}\label{s:Examples} 
In this section, we give examples of Euler characteristics and counting polynomials of generic character varieties. We also give tables containing $G$-types, $\fg$-types, and the associated data required to compute the counting polynomials. Some expressions are obtained with the help of the computer; see \cite{Bailey} for the computer code using CHEVIE \cite{GHLMP96,Michel15}.
 
Throughout, $\Phi_i$ denotes the $i$th cyclotomic polynomial; thus, for instance,
\[
\Phi_1=q-1,\quad \Phi_2=q+1,\quad \Phi_3=q^2+q+1,\quad \Phi_4 = q^2+1,\quad \Phi_6 = q^2-q+1.
\]
We use the notation of \cite[\S13]{Carter93} for characters of Weyl groups, unipotent characters, and nilpotent orbits. We label Levi subgroups and endoscopy groups by their root systems, with $A_1'$ denoting short copies of $A_1$. Given an isolated standard endoscopy group $L$ of $G$, we denote by $\pi_0^L$ the number of components $|\pi_0(Z(\check{L}))|$. 
\vskip 20pt 
\subsection{Euler characteristics for $\overline{X}$ in genus $0$}
\mbox{}
\begin{figure}[h]
\centering
\begin{tabular}{|c|c|} 
\hline 
$G$ & $\chi(\overline{\bX})$ \\
\hline 
$\GL_2$ & $2^{n-4} (n-1)(n-2)$ \\ \hline 
$\GL_3$ & $2^{n-5} 3^{n-3} (n-1)(n-2) (9n^2 - 27n + 16)$ \\ \hline 
$\GL_4$ & $2^{3n-9} 3^{n-4} (n-1)(n-2) (108 n^4 - 648 n^3 + 1350 n^2 - 1129 n + 324)$ \\ \hline 
$\SO_5$ & $2^{3n-8} (n-1)(n-2) (11n^2 - 33n + 19)$ \\\hline 
$G_2$ & $2^{2n-7} 3^{n-3} (n-1)(n-2) (207n^2 - 621n + 350)$ \\
\hline 
\end{tabular}
\end{figure}

Note that $\chi(\overline{X})=0$ when $n=1$ or $2$, in agreement with the fact that the (generic) character variety is empty in these cases. \\

\subsection{Euler characteristics of $\overline{X}$ when $(G,g)=(\SO_5, 1)$}\label{s:euler} 
\mbox{}
\vskip 10pt 
\begin{figure}[h]
\centering
\begin{tabular}{|c|ccccc|} 
\hline 
$[L]$ & $|[L]|$ & $W(L)$ & $|W(L)|$ & $|\Irr(W(L))|$ & $\nu(L)$ \\
\hline 
$[\SO_5]$ & $1$ & $D_8$ & $8$ & $5$ & $2$ \\
$[{A_1\times A_1}]$ & $1$ & $S_2\times S_2$ & $4$ & $4$ & $2$ \\
$[{A_1}]$ & $2$ & $S_2$ & $2$ & $2$ & $-4$ \\
$[{A_1'}]$ & $2$ & $S_2$ & $2$ & $2$ & $-2$ \\
$[T]$ & $1$ & $1$ & $1$ & $1$ & $8$ \\
\hline 
\end{tabular}
\end{figure}

The above table displays the required data for computing $\chi(\overline{X})$ when $G=\SO_5$ and $g=1$. Using this table, we find ${\chi(\overline{\bX}) = 72 \times 8^{n-1}}$.

\newpage

\subsection{Euler characteristics of $\overline{X}$ when $(G,g)=(G_2, 1)$}\label{s:euler} 
\mbox{}\\

\begin{figure}[h]
\begin{tabular}{|c|ccccc|} 
\hline 
$[L]$ & $|[L]|$ & $W(L)$ & $|W(L)|$ & $|\Irr(W(L))|$ & $\nu(L)$ \\
\hline 
$[G_2]$ & $1$ & $D_{12}$ & $12$ & $6$ & $1$ \\
$[{A_2}]$ & $1$ & $S_3$ & $6$ & $3$ & $2$ \\
$[{A_1\times A_1'}]$ & $3$ & $S_2\times S_2$ & $4$ & $4$ & $1$ \\
$[{A_1}]$ & $3$ & $S_2$ & $2$ & $2$ & $-4$ \\
$[{A_1'}]$ & $3$ & $S_2$ & $2$ & $2$ & $-2$ \\
$[T]$ & $1$ & $1$ & $1$ & $1$ & $12$ \\
\hline 
\end{tabular}
\end{figure}

The above table displays the required data for computing $\chi(\overline{X})$ when $G=G_2$ and $g=1$. Using this table, we find $\chi(\overline{\bX}) = 96 \times 12^{n-1}$.\\\\

\vskip10pt
\subsection{Euler characteristic of $\overline{\bY}$} \label{s:EulerY}
\mbox{}\\
\begin{figure}[h]
\centering
\begin{tabular}{|c|c|} 
\hline 
$G$ & $\chi(\overline{Y})$ \\
\hline 
$\GL_2$ & $1+2^{n-2}(4g+n-3)$ \\
\hline
$\GL_3$ & \makecell{$1+2\cdot 3^{n-1}(3g+n-3)$ \\ $+3^{n-2}2^{n+3}(144g^2+72g(n-3)  +80 -53n + 9n^2)$} \\
\hline
$\SO_5$ & \makecell{$1+2^{n-2} + 2^{2n-3}(24g+7n-22)$ \\ $+ 2^{3n-5}(96g^2 + 48g(n-3) + 53 -35n + 6n^2)$} \\
\hline
$G_2$ & \makecell{$\frac{1}{6}(6-2^n + 19\cdot 3^{n+1} + 5^n)$ \\ $+395\cdot 12^{n-2} + 5g \cdot 6^n +6^{n-2}(48-65\cdot 2^n)n$ \\ $+5\cdot 3^n4^{n-2}(16g^2 + 8g(n-3) + n^2)$} \\
\hline 
\end{tabular}
\end{figure}

\newpage 

\subsection{Types for $G=\mathrm{SO}_5$ and expressions for the counting polynomial $f_X$} \ 

\begin{figure}[h]
\centering
\begin{tabular}{|c|cc|ccccccc|} 
\hline
$\tau=[L,\rho]$ & $m_\tau(q)$ & $S_\tau(q)$ & $|L^F|$ & $\tilde{\rho}(1)$ & $\rho(1)$ & $|W(L)|$ & $|[L]|$ & $\pi_0^L$ & $\nu(L)$ \\
\hline
$[\SO_5,\binom{2}{}]$ 		& $q^4\Phi_1^2\Phi_2^2\Phi_4$ & $2$ & $q^4\Phi_1^2\Phi_2^2\Phi_4$ & $1$ & $1$ & $8$ & $1$ & $2$ & $2$ \\
$[\SO_5,\binom{0\ 1}{2}]$ 		& $2q^3\Phi_1^2\Phi_2^2$ & $2$ & $q^4\Phi_1^2\Phi_2^2\Phi_4$ & $\frac{1}{2}q\Phi_4$ & $1$ & $8$ & $1$ & $2$ & $2$ \\
$[\SO_5,\binom{1\ 2}{0}]$ 		& $2q^3\Phi_1^2\Phi_2^2$ & $2$ & $q^4\Phi_1^2\Phi_2^2\Phi_4$ & $\frac{1}{2}q\Phi_4$ & $1$ & $8$ & $1$ & $2$ & $2$ \\
$[\SO_5,\binom{0\ 2}{1}]$ 		& $2q^3\Phi_1^2\Phi_4$ & $2^{n+1}$ & $q^4\Phi_1^2\Phi_2^2\Phi_4$ & $\frac{1}{2}q\Phi_2^2$ & $2$ & $8$ & $1$ & $2$ & $2$ \\
$[\SO_5,\binom{0\ 1\ 2}{1\ 2}]$ 	& $\Phi_1^2\Phi_2^2\Phi_4$ & $2$ & $q^4\Phi_1^2\Phi_2^2\Phi_4$ & $q^4$ & $1$ & $8$ & $1$ & $2$ & $2$ \\
$[{A_1\times A_1},2^1\otimes2^1]$ 	& $ q^4\Phi_1^2\Phi_2^2$ & $2^n$ & $q^2\Phi_1^2\Phi_2^2$ & $1$ & $1$ & $4$ & $1$ & $4$ & $2$ \\
$[{A_1\times A_1},2^1\otimes1^2]$ 	& $q^3\Phi_1^2\Phi_2^2$ & $2^n$ & $q^2\Phi_1^2\Phi_2^2$ & $q$ & $1$ & $4$ & $1$ & $4$ & $2$ \\
$[{A_1\times A_1},1^2\otimes2^1]$ 	& $q^3\Phi_1^2\Phi_2^2$ & $2^n$ & $q^2\Phi_1^2\Phi_2^2$ & $q$ & $1$ & $4$ & $1$ & $4$ & $2$ \\
$[{A_1\times A_1},1^2\otimes1^2]$ 	& $q^2\Phi_1^2\Phi_2^2$ & $2^n$ & $q^2\Phi_1^2\Phi_2^2$ & $q^2$ & $1$ & $4$ & $1$ & $4$ & $2$ \\
$[{A_1},2^1]$ & $q^4\Phi_1^2\Phi_2$ & $-2\cdot 4^n$ & $q\Phi_1^2\Phi_2$ 	& $1$ & $1$ & $2$ & $2$ & $2$ & $-4$ \\
$[{A_1},1^2]$ & $q^3\Phi_1^2\Phi_2$ & $-2\cdot 4^n$ & $q\Phi_1^2\Phi_2$ 	& $q$ & $1$ & $2$ & $2$ & $2$ & $-4$ \\
$[{A_1'},2^1]$ & $q^4\Phi_1^2\Phi_2$ & $-4^{n}$ & $q\Phi_1^2\Phi_2$ & $1$ & $1$ & $2$ & $2$ & $1$ & $-2$ \\
$[{A_1'},1^2]$ & $q^3\Phi_1^2\Phi_2$ & $-4^{n}$ & $q\Phi_1^2\Phi_2$ & $q$ & $1$ & $2$ & $2$ & $1$ & $-2$ \\
$[T,1^1]$ & $q^4\Phi_1^2$ & $8^n$ & $\Phi_1^2$ & $1$ & $1$ & $1$ & $1$ & $1$ & $8$ \\
\hline
\end{tabular}
\end{figure}

\vskip20pt
The above table displays $\mathrm{SO}_5$-types and the data require for counting points on character varieties. The point counts for low values of $g$ and $n$ are given explicitly below. Since the polynomials are palindromic, we only give ``half" of them. 
\vskip 20pt
\begin{figure}[h]
\centering
\begin{tabular}{|c|c|} 
\hline
$(g,n)$ & $f_X(q)$ \\
\hline
$(0,3)$ & $2q^4+12q^3+48q^2+12q+2$ \\
\hline
$(0,4)$ & \makecell{$2q^{12}+16q^{11}+68q^{10}+208q^9+530q^8$ \\
$+1216q^7+1968q^6+\cdots+16q+2$} \\
\hline
$(1,1)$ & \makecell{$2q^8+4q^7+6q^6+16q^5+16q^4$ \\ $+16q^3+6q^2+4q+2$} \\
\hline
$(1,2)$ & \makecell{$2q^{16}+8q^{15}+16q^{14}+24q^{13}+32q^{12}+40q^{11}$ \\
$+48q^{10}+24q^9+188q^8+\cdots+8q+2$} \\
\hline
$(2,1)$ & \makecell{$2q^{28}+4q^{27}-4q^{25}-8q^{24}-12q^{23}+2q^{22}$ \\ 
$+16q^{21}+6q^{20}+52q^{19}-40q^{18}+228q^{17}$ \\ 
$-840q^{16}+708q^{15}-228q^{14}+\cdots+4q+2$} \\
\hline
\end{tabular}
\end{figure}

\newpage 

\begin{landscape}
\subsection{Types for $\mathfrak{so}_5$ and expressions for the counting polynomial $f_Y$} 
\mbox{}\\

\begin{figure}[h]
\begin{tabular}{|c|cc|cccccc|} 
\hline
$\tau=[L,\cN]$ & ${d(\tau)}$ & $H_\tau(q)$ & $|L^F|$ & $|\cN|$ & $Q_T^L(\varpi(\cN))$ & $|W(L)|$ & $|[L]|$ & $\mu(L,G)$ \\
\hline
$[\SO_5,1^5]$ & ${10}$ & $q^{4n}\Phi_2^{2n}\Phi_4^n$ & $q^4\Phi_1^2\Phi_2^2\Phi_4$ & $1$ & $\Phi_2^2\Phi_4$ & $8$ & $1$ & $1$ \\
$[\SO_5,2^21^1]$ & ${6}$ & $q^{4n}\Phi_1\Phi_2^{2n+1}\Phi_4$ & $q^4\Phi_1^2\Phi_2^2\Phi_4$ & $\Phi_1\Phi_2\Phi_4$ & $\Phi_2^2$ & $8$ & $1$ & $1$ \\
$[\SO_5,3^11^2]$ & ${4}$ & $\frac{1}{2}q^{4n+1}\Phi_1\Phi_2^2\Phi_4(3q+1)^n$ & $q^4\Phi_1^2\Phi_2^2\Phi_4$ & $\frac{1}{2}q\Phi_1\Phi_2^2\Phi_4$ & $3q+1$ & $8$ & $1$ & $1$ \\
$[\SO_5,3^11^2_\star]$ & ${4}$ & $\frac{1}{2}q^{4n+1}\Phi_1^2\Phi_2^{n+1}\Phi_4$ & $q^4\Phi_1^2\Phi_2^2\Phi_4$ & $\frac{1}{2}q\Phi_1^2\Phi_2\Phi_4$ & $\Phi_2$ & $8$ & $1$ & $1$ \\
$[\SO_5,5^1]$ & ${2}$ & $q^{4n+2}\Phi_1^2\Phi_2^2\Phi_4$ & $q^4\Phi_1^2\Phi_2^2\Phi_4$ & $q^2\Phi_1^2\Phi_2^2\Phi_4$ & $1$ & $8$ & $1$ & $1$ \\
$[A_1,1^2]$ & ${4}$ & $-2\cdot 4^{n-1} q^{4n+3} \Phi_2^{n+1}\Phi_4$ & $q\Phi_1^2\Phi_2$ & $1$ & $\Phi_2$ & $2$ & $2$ & $-1$ \\
$[A_1,2^1]$ & ${2}$ & $-2\cdot 4^{n-1} q^{4n+3} \Phi_1\Phi_2^2\Phi_4$ & $q\Phi_1^2\Phi_2$ & $\Phi_1\Phi_2$ & $1$ & $2$ & $2$ & $-1$ \\
$[A_1',1^2]$ & ${4}$ & $-2\cdot 4^{n-1} q^{4n+3} \Phi_2^{n+1}\Phi_4$ & $q\Phi_1^2\Phi_2$ & $1$ & $\Phi_2$ & $2$ & $2$ & $-1$ \\
$[A_1',2^1]$ & ${2}$ & $-2\cdot 4^{n-1}q^{4n+3}\Phi_1\Phi_2^2\Phi_4$ & $q\Phi_1^2\Phi_2$ & $\Phi_1\Phi_2$ & $1$ & $2$ & $2$ & $-1$ \\
$[T,0]$ & ${2}$ & $3 \cdot 8^{n-1} q^{4n+4}\Phi_2^2\Phi_4$ & $\Phi_1^2$ & $1$ & $1$ & $1$ & $1$ & $3$ \\
\hline
\end{tabular}
\end{figure}

The above table displays $\mathfrak{so}_5$-types and the data require for counting points on the additive character varieties. The point counts for low values of $g$ and $n$ are given explicitly below.

\begin{figure}[h]
\centering
\begin{tabular}{|c|c|} 
\hline
$(g,n)$ & $f_Y(q)$ \\
\hline
$(0,3)$ & $q^4+6q^3+20q^2$ \\
\hline
$(0,4)$ & \makecell{$q^{12}+8q^{11}+33q^{10}+96q^9+223q^8+440q^7+548q^6$} \\
\hline
$(1,1)$ & $q^8+2q^7+4q^6+4q^5+q^4$ \\
\hline
$(1,2)$ & \makecell{$q^{16}+4q^{15}+9q^{14}+16q^{13}+25q^{12}$ \\ $+36q^{11}+36q^{10}+16q^9+q^8$} \\
\hline
$(2,1)$ & \makecell{$q^{28}+2q^{27}+3q^{26}+4q^{25}+5q^{24}+6q^{23}+8q^{22}$ \\ 
$+10q^{21}+11q^{20}+10q^{19}+7q^{18}+4q^{17}+q^{16}$
} \\
\hline
\end{tabular}
\end{figure}

\end{landscape}

\begin{landscape}
\subsection{Types for $G=G_2$}\ 
\null\vfill
\begin{figure}[h]
\centering
\begin{tabular}{|c|cc|ccccccc|} 
\hline
$\tau=[L,\rho]$ & $m_\tau(q)$ & $S_\tau(q)$ & $|L^F|$ & $\rho(1)$ & $\tilde{\rho}(1)$ & $|W(L)|$ & $|[L]|$ & $\pi_0^L$ & $\nu(L)$ \\
\hline 
$[G_2,\phi_{1,0}]$ & $q^6\Phi_1^2\Phi_2^2\Phi_3\Phi_6$ & $1$ & $q^6\Phi_1^2\Phi_2^2\Phi_3\Phi_6$ & $1$ & $1$ & $12$ & $1$ & $1$ & $1$ \\
$[G_2,\phi_{1,3}']$ & $3q^5\Phi_1^2\Phi_2^2$ & $1$ & $q^6\Phi_1^2\Phi_2^2\Phi_3\Phi_6$ & $\frac{1}{3}q\Phi_3\Phi_6$ & $1$ & $12$ & $1$ & $1$ & $1$ \\
$[G_2,\phi_{1,3}'']$ & $3q^5\Phi_1^2\Phi_2^2$ & $1$ & $q^6\Phi_1^2\Phi_2^2\Phi_3\Phi_6$ & $\frac{1}{3}q\Phi_3\Phi_6$ & $1$ & $12$ & $1$ & $1$ & $1$ \\
$[G_2,\phi_{2,1}]$ & $6q^5\Phi_1^2\Phi_6$ & $2^n$ & $q^6\Phi_1^2\Phi_2^2\Phi_3\Phi_6$ & $\frac{1}{6}q\Phi_2^2\Phi_3$ & $2$ & $12$ & $1$ & $1$ & $1$ \\
$[G_2,\phi_{2,2}]$ & $2q^5\Phi_1^2\Phi_3$ & $2^n$ & $q^6\Phi_1^2\Phi_2^2\Phi_3\Phi_6$ & $\frac{1}{2}q\Phi_2^2\Phi_6$ & $2$ & $12$ & $1$ & $1$ & $1$ \\
$[G_2,\phi_{1,6}]$ & $\Phi_1^2\Phi_2^2\Phi_3\Phi_6$ & $1$ & $q^6\Phi_1^2\Phi_2^2\Phi_3\Phi_6$ & $q^6$ & $1$ & $12$ & $1$ & $1$ & $1$ \\
$[{A_2},3^1]$ & $q^6\Phi_1^2\Phi_2\Phi_3$ & $2^n$ & $q^3\Phi_1^2\Phi_2\Phi_3$ & $1$ & $1$ & $6$ & $1$ & $3$ & $2$ \\
$[{A_2},2^11^1]$ & $q^5\Phi_1^2\Phi_3$ & $4^n$ & $q^3\Phi_1^2\Phi_2\Phi_3$ & $q\Phi_2$ & $2$ & $6$ & $1$ & $3$ & $2$ \\
$[{A_2},1^3]$ & $q^3\Phi_1^2\Phi_2\Phi_3$ & $2^n$ & $q^3\Phi_1^2\Phi_2\Phi_3$ & $q^3$ & $1$ & $6$ & $1$ & $3$ & $2$ \\
$[{A_1\times A_1'},2^1\otimes2^1]$ & $q^6\Phi_1^2\Phi_2^2$ & $3^n$ & $q^2\Phi_1^2\Phi_2^2$ & $1$ & $1$ & $4$ & $3$ & $2$ & $1$ \\
$[{A_1\times A_1'},2^1\otimes1^2]$ & $q^5\Phi_1^2\Phi_2^2$ & $3^n$ & $q^2\Phi_1^2\Phi_2^2$ & $q$ & $1$ & $4$ & $3$ & $2$ & $1$ \\
$[{A_1\times A_1'},1^2\otimes2^1]$ & $q^5\Phi_1^2\Phi_2^2$ & $3^n$ & $q^2\Phi_1^2\Phi_2^2$ & $q$ & $1$ & $4$ & $3$ & $2$ & $1$ \\
$[{A_1\times A_1'},1^2\otimes1^2]$ & $q^4\Phi_1^2\Phi_2^2$ & $3^n$ & $q^2\Phi_1^2\Phi_2^2$ & $q^2$ & $1$ & $4$ & $3$ & $2$ & $1$ \\
$[{A_1},2^1]$ & $q^6\Phi_1^2\Phi_2$ & $-2\cdot 6^n$ & $q\Phi_1^2\Phi_2$ & $1$ & $1$ & $2$ & $3$ & $1$ & $-4$ \\
$[{A_1},1^2]$ & $q^5\Phi_1^2\Phi_2$ & $-2\cdot 6^n$ & $q\Phi_1^2\Phi_2$ & $q$ & $1$ & $2$ & $3$ & $1$ & $-4$ \\
$[{A_1'},2^1]$ & $q^6\Phi_1^2\Phi_2$ & $-6^n$ & $q\Phi_1^2\Phi_2$ & $1$ & $1$ & $2$ & $3$ & $1$ & $-2$ \\
$[{A_1'},1^2]$ & $q^5\Phi_1^2\Phi_2$ & $-6^n$ & $q\Phi_1^2\Phi_2$ & $q$ & $1$ & $2$ & $3$ & $1$ & $-2$ \\
$[T,1^1]$ & $q^6\Phi_1^2$ & $12^n$ & $\Phi_1^2$ & $1$ & $1$ & $1$ & $1$ & $1$ & $12$ \\
\hline
\end{tabular}
\end{figure}
\null\vfill
\end{landscape}

\begin{landscape}
\subsection{Types for $\fg=\fg_2$}
\mbox{}\\

\begin{figure}[h]
\centering
\begin{tabular}{|c|cc|cccccc|} 
\hline
$\tau=[L,\cN]$ & $q^{d(\tau)}$ & $H_\tau(q)$ & $|L^F|$ & $|\cN|$ & $Q_T^L(\varpi(\cN))$ & $|W(L)|$ & $|[L]|$ & $\mu(L,G)$ \\
\hline
$[G_2,0]$ & $q^{14}$ & $q^{6n} \Phi_2^{2n}\Phi_3^n\Phi_6^n$ & $q^6\Phi_1^2\Phi_2^2\Phi_3\Phi_6$ & $1$ & $\Phi_2^2\Phi_3\Phi_6$ & $12$ & $1$ & $1$ \\
$[G_2,A_1]$ & $q^8$ & $q^{6n} \Phi_1\Phi_2^{n+1}\Phi_3^{n+1}\Phi_6$ & $q^6\Phi_1^2\Phi_2^2\Phi_3\Phi_6$ & $\Phi_1\Phi_2\Phi_3\Phi_6$ & $\Phi_2\Phi_3$ & $12$ & $1$ & $1$ \\
$[G_2,\tilde{A_1}]$ & $q^6$ & $q^{6n+2}\Phi_1\Phi_2^{n+1}\Phi_3\Phi_6 (2q+1)^n$ & $q^6\Phi_1^2\Phi_2^2\Phi_3\Phi_6$ & $q^2\Phi_1\Phi_2\Phi_3\Phi_6$ & $(2q+1)\Phi_2$ & $12$ & $1$ & $1$ \\
$[G_2,G_2(a_1)_{1^3}]$ & $q^4$ & $\frac{1}{6}q^{6n+2} \Phi_1^2\Phi_2^2\Phi_3\Phi_6 (4q+1)^n$ & $q^6\Phi_1^2\Phi_2^2\Phi_3\Phi_6$ & $\frac{1}{6}q^2\Phi_1^2\Phi_2^2\Phi_3\Phi_6$ & $4q+1$ & $12$ & $1$ & $1$ \\
$[G_2,G_2(a_1)_{2^11^1}]$ & $q^4$ & $\frac{1}{2}q^{6n+2}\Phi_1^2\Phi_2^2\Phi_3\Phi_6 (2q+1)^n$ & $q^6\Phi_1^2\Phi_2^2\Phi_3\Phi_6$ & $\frac{1}{2}q^2\Phi_1^2\Phi_2^2\Phi_3\Phi_6$ & $2q+1$ & $12$ & $1$ & $1$ \\
$[G_2,G_2(a_1)_{3^1}]$ & $q^4$ & $\frac{1}{3}q^{6n+2}\Phi_1^2\Phi_2^{n+2}\Phi_3\Phi_6$ & $q^6\Phi_1^2\Phi_2^2\Phi_3\Phi_6$ & $\frac{1}{3}q^2\Phi_1^2\Phi_2^2\Phi_3\Phi_6$ & $\Phi_2$ & $12$ & $1$ & $1$ \\
$[G_2,G_2]$ & $q^2$ & $q^{6n+4}\Phi_1^2\Phi_2^2\Phi_3\Phi_6$ & $q^6\Phi_1^2\Phi_2^2\Phi_3\Phi_6$ & $q^4\Phi_1^2\Phi_2^2\Phi_3\Phi_6$ & $1$ & $12$ & $1$ & $1$ \\
$[A_1,1^2]$ & $q^4$ & $-3\cdot 6^{n-1}q^{6n+5}\Phi_2^{n+1}\Phi_3\Phi_6$ & $q\Phi_1^2\Phi_2$ & $1$ & $\Phi_2$ & $2$ & $3$ & $-1$ \\
$[A_1,2^1]$ & $q^2$ & $-3\cdot 6^{n-1}q^{6n+5}\Phi_1\Phi_2^2\Phi_3\Phi_6$ & $q\Phi_1^2\Phi_2$ & $\Phi_1\Phi_2$ & $1$ & $2$ & $3$ & $-1$ \\
$[A_1',1^2]$ & $q^4$ & $-3\cdot 6^{n-1}q^{6n+5}\Phi_2^{n+1}\Phi_3\Phi_6$ & $q\Phi_1^2\Phi_2$ & $1$ & $\Phi_2$ & $2$ & $3$ & $-1$ \\
$[A_1',2^1]$ & $q^2$ & $-3\cdot 6^{n-1}q^{6n+5}\Phi_1\Phi_2^2\Phi_3\Phi_6$ & $q\Phi_1^2\Phi_2$ & $\Phi_1\Phi_2$ & $1$ & $2$ & $3$ & $-1$ \\
$[T,0]$ & $q^2$ & $5\cdot 12^{n-1} q^{6n+6}\Phi_2^2\Phi_3\Phi_6$ & $\Phi_1^2$ & $1$ & $1$ & $1$ & $1$ & $5$ \\
\hline
\end{tabular}
\end{figure}
\null\vfill
\end{landscape}

\newpage 

\subsection{Counting polynomials for $X$ and $Y$ when $G=G_2$} 
\mbox{}\\

\begin{figure}[h]
\centering
\begin{tabular}{|c|c|} 
\hline
$(g,n)$ & $f_X(q)$ \\
\hline
$(0,3)$ & $q^{8}+6q^{7}+20q^{6}+58q^{5}+180q^{4}+58q^{3}+20q^{2}+6q+1$ \\
\hline
$(0,4)$ & \makecell{
$q^{20}+8q^{19}+34q^{18}+104q^{17}+259q^{16}+560q^{15}+1106q^{14}$ \\ 
$+2080q^{13}+3890q^{12}+7440q^{11}+11444q^{10} + \cdots + 8q + 1$} \\
\hline
$(1,1)$ & \makecell{
$q^{12}+2q^{11}+2q^{10}+4q^{9}+9q^{8}+26q^{7}+8q^{6}+\cdots+2q+1$
} \\
\hline
$(1,2)$ & \makecell{
$q^{24}+4q^{23}+8q^{22}+12q^{21}+16q^{20}+20q^{19}+26q^{18}+36q^{17}$ \\
$+57q^{16}+88q^{15}+198q^{14}-240q^{13}+700q^{12}+\cdots+4q+1$
} \\
\hline
$(2,1)$ & \makecell{
$q^{40}+2q^{39}-2q^{37}-q^{36}-3q^{34}-6q^{33}+8q^{31}+7q^{30}-4q^{28}$ 
\\ $-2q^{27}-9q^{26}+482q^{25}-2885q^{24}+10278q^{23}-23499q^{22}$ \\
$+35928q^{21}-40590q^{20}+\cdots+2q+1$
} \\
\hline
\end{tabular}
\end{figure}

\vskip 20pt 

\begin{figure}[h] 
\centering
\begin{tabular}{|c|c|} 
\hline
$(g,n)$ & $f_Y(q)$ \\
\hline
$(0,3)$ & $q^8+6q^7+19q^6+45q^5+99q^4$ \\
\hline
$(0,4)$ & \makecell{$q^{20}+8q^{19}+33q^{18}+96q^{17}+225q^{16}+456q^{15}$ \\
$+831q^{14}+1392q^{13}+2191q^{12}+3300q^{11}+3498q^{10}$
} \\
\hline
$(1,1)$ & $q^{12}+2q^{11}+3q^{10}+5q^9+9q^8+8q^7+2q^6$ \\
\hline
$(1,2)$ & \makecell{$q^{24}+4q^{23}+9q^{22}+16q^{21}+25q^{20}+36q^{19}+49q^{18}$ \\
$+64q^{17}+85q^{16}+110q^{15}+99q^{14}+40q^{13}+2q^{12}$} \\
\hline
$(2,1)$ & \makecell{$q^{40}+2q^{39}+3q^{38}+4q^{37}+5q^{36}+6q^{35}+7q^{34}+8q^{33}+9q^{32}+11q^{31}$ \\
$+13q^{30}+15q^{29}+18q^{28}+20q^{27}+21q^{26}+18q^{25}+12q^{24}+6q^{23}+q^{22}$} \\
\hline
\end{tabular}
\label{figure:addg2}
\end{figure}


\newpage 
\subsection{Counting polynomials when $(g,n)=(0,3)$}\label{ss:nonneg}
\mbox{}\\

\begin{figure}[h]
\centering
\begin{tabular}{|c|c|} 
\hline 
$G$ & $f_X(q)$ \\
\hline 
$B_3$ & \makecell{$2q^{12}+18q^{11}+88q^{10}+312q^9+952q^8+2478q^7+4980q^6+ \cdots +18q + 2$} \\
\hline
$C_3$ & \makecell{$2q^{12}+18q^{11}+88q^{10}+320q^9+970q^8+2506q^7+5060q^6 + \cdots + 18q + 2$} \\
\hline
$D_4$ & \makecell{$4q^{16}+48q^{15}+308q^{14}+1408q^{13}+5140q^{12}+16176q^{11}$ \\
$+43748q^{10}+96256q^9+152864q^8+\cdots+48q+4$
} \\
\hline
$F_4$ & \makecell{$q^{40}+12q^{39}+77q^{38}+352q^{37}+1287q^{36}+4004q^{35}+11010q^{34}+27444q^{33}$ \\
$+63155q^{32}+136096q^{31}+277849q^{30}+542780q^{29}+1023705q^{28}+1879344q^{27}$ \\
$+3384741q^{26}+6009812q^{25}+10498044q^{24}+17873468q^{23}+29224321q^{22}$ \\ 
$+44790488q^{21}+58508548q^{20}+\cdots+12q+1$} \\
\hline
$E_6$ & \makecell{$3q^{60}+54q^{59}+510q^{58}+3366q^{57}+17442q^{56}$ \\
$+75579q^{55}+284829q^{54}+958746q^{53}$ \\
$+2938005q^{52}+8314473q^{51}+21970542q^{50}$ \\
$+54685197q^{49}+129125661q^{48}+290973411q^{47}$ \\
$+628951404q^{46}+1310014803q^{45}+2640078312q^{44}$ \\
$+5167269447q^{43}+9854714367q^{42}+18363440685q^{41}$ \\
$+33501157887q^{40}+59902887102q^{39}+104998780206q^{38}+180252368460q^{37}$ \\
$+302421156681q^{36}+493955088993q^{35}+780185905473q^{34}+1178431874241q^{33}$ \\ 
$+1671208058577q^{32}+2158923553011q^{31}+2408708043594q^{30}+\cdots+510q^2+54q+3$} \\
\hline 
\end{tabular}
\end{figure}

\vskip 20pt

\begin{figure}[h]
\centering
\begin{tabular}{|c|c|} 
\hline 
$G$ & $f_Y(q)$ \\
\hline 
$B_3$ & $q^{12}+9q^{11}+43q^{10}+147q^9+412q^8+948q^7+1535q^6
$ \\
\hline
$C_3$ & $q^{12}+9q^{11}+43q^{10}+148q^9+413q^8+946q^7+1533q^6
$ \\
\hline
$D_4$ & \makecell{$q^{16}+12q^{15}+76q^{14}+340q^{13}+1206q^{12}+3668q^{11}+9451q^{10}+19080q^9+24492q^8$} \\
\hline
$F_4$ & \makecell{$q^{40}+12q^{39}+76q^{38}+340q^{37}+1210q^{36}+3652q^{35}+9722q^{34}+23428q^{33}+52049q^{32}$ \\
$+108072q^{31}+211964q^{30}+396184q^{29}+711295q^{28}+1235704q^{27}+2091762q^{26}$ \\ 
$
+3459608q^{25}+5550351q^{24}+8475196q^{23}+11932547q^{22}+14602160q^{21}+12631801q^{20}$} \\
\hline
$E_6$ & \makecell{$q^{60}+18q^{59}+169q^{58}+1104q^{57}+5644q^{56}+24070q^{55}+89110q^{54}+294202q^{53}$ \\
$+883118q^{52}+2445142q^{51}+6314277q^{50}+15342436q^{49}+35328016q^{48}+77552468q^{47}$ \\
$+163141890q^{46}+330385400q^{45}+646787466q^{44}+1228563901q^{43}+2271409786q^{42}$ \\
$+4097165169q^{41}+7220215707q^{40}+12432586601q^{39}+20893248670q^{38}$ \\ 
$+34175389336q^{37}+54155977909q^{36}+82488035626q^{35}+119164974059q^{34}$ \\
$+159544431246q^{33}+189727222032q^{32}+183584161672q^{31}+112360075923q^{30}$} \\
\hline 
\end{tabular}
 \label{figure:gndiverse}
\end{figure}

\newpage

\begin{bibdiv}
\begin{biblist}[]*{labels={alphabetic}}
\bib{AtiyahBott83}{article}{
   author={Atiyah, M.F.},
   author={Bott, R.},
   title={The Yang-Mills equations over Riemann surfaces},
   journal={Philos. Trans. Roy. Soc. London Ser. A},
   volume={308},
   date={1983},
   number={1505},
   pages={523--615},
   issn={0080-4614},
}

\bib{Behrend91}{thesis}{
author={Behrend, K.A.},
title={The Lefschetz trace formula for the moduli stack of principal bundles},
type={Ph.D.\ thesis},
organization={University of California, Berkeley},
year={1991}
}

\bib{BeilinsonDrinfeld}{article}{
   author={Beilinson, A.A.},
   author={Drinfeld, V.G.},
   title={Quantization of Hitchin's fibration and Langlands' program},
   conference={
      title={Algebraic and geometric methods in mathematical physics},
      address={Kaciveli},
      date={1993},
   },
   book={
      series={Math. Phys. Stud.},
      volume={19},
      publisher={Kluwer Acad. Publ., Dordrecht},
   },
   isbn={0-7923-3909-6},
   date={1996},
   pages={3--7},
}

\bib{BenZviNadler}{article}{
   author={Ben-Zvi, D.},
   author={Nadler, D.},
   title={Betti geometric Langlands},
   conference={
      title={Algebraic geometry: Salt Lake City 2015},
   },
   book={
      series={Proc. Sympos. Pure Math.},
      volume={97.2},
      publisher={Amer. Math. Soc., Providence, RI},
   },
   isbn={978-1-4704-3578-3},
   date={2018},
   pages={3--41},
}

\bib{Boalch14}{article}{
author={Boalch, P.},
title={Geometry and braiding of Stokes data; fission and wild character varieties},
journal={Ann. of Math. (2)},
volume={179},
date={2014},
number={1},
pages={301--365},
}

\bib{BogaartEdixhoven}{incollection}{
  title={Algebraic stacks whose number of points over finite fields is a polynomial},
  author={van den Bogaart, T.},
   author={ Edixhoven, B.},
   BOOKTITLE = {Number fields and function fields---two parallel worlds},
    SERIES = {Progr. Math.},
    VOLUME = {239},
     PAGES = {39--49},
 PUBLISHER = {Birkh\"auser Boston, Boston, MA},
      YEAR = {2005},
      ISBN = {978-0-8176-4397-3; 0-8176-4397-4},
}

\bib{Bonnafe05}{article}{
   author={Bonnaf\'e, C.},
   title={Quasi-isolated elements in reductive groups},
   journal={Comm. Algebra},
   volume={33},
   date={2005},
   number={7},
   pages={2315--2337},
   issn={0092-7872},
}


\bib{BK22}{article}{
author={Bridger, N.},
author={Kamgarpour, M.},
year={2022},
title={Character stacks are PORC count},
pages={1--22},
journal={J. Aust. Math. Soc.},
}

\bib{Cambo17}{thesis}{
author={Camb\`{o}, V.},
title={On the $E$-polynomial of parabolic $\Sp_{2n}$-character varieties},
year={2017},
type={Ph.D.\ thesis},
organization={Scuola Internazionale Superiore di Studi Avanzati - Trieste}
}

\bib{Carter72}{article}{
author={Carter, R.W.},
title={Conjugacy classes in the Weyl group},
journal={Compositio Math.},
volume={25},
date={1972},
pages={1--59},
}

\bib{Carter93}{book}{
author={Carter, R.W.},
title={Finite groups of Lie type},
series={Wiley Classics Library},
date={1993},
}

\bib{CBIndecomposable}{article}{
   author={Crawley-Boevey, W.},
   title={Indecomposable parabolic bundles and the existence of matrices in
   prescribed conjugacy class closures with product equal to the identity},
   journal={Publ. Math. Inst. Hautes \'Etudes Sci.},
   number={100},
   date={2004},
   pages={171--207},
   issn={0073-8301},
}

\bib{Lucien}{article}{
   author={Davison, B.},
   author={Hennecart, L.}, 
   author={Mejia, S.},
   title={{BPS} algebras and generalised {Kac--Moody} algebras from 2-Calabi--Yau categories},
   journal={arXiv:2303.12592v4},
   Year={2024}, 
   }

\bib{DL76}{article}{
author={Deligne, P.},
author={Lusztig, G.},
title={Representations of reductive groups over finite fields},
journal={Ann. of Math. (2)},
volume={103},
number={1},
pages={103--161},
year={1976},
}

\bib{Deriziotis}{article}{
  title={Centralizers of semisimple elements in a Chevalley group},
  author={Deriziotis, D. I.},
     JOURNAL = {Comm. Algebra},
  volume={9},
  number={19},
  pages={1997--2014},
  year={1981},
  publisher={Taylor \& Francis}
}


\bib{DM20}{book}{
author={Digne, F.},
author={Michel, J.}, 
title={Representations of Finite Groups of Lie Type}, 
publisher={Cambridge University Press, Cambridge},
series={London Mathematical Society Student Texts}, 
edition={2}, 
year={2020}, 
}

\bib{Etingof11}{book}{
author={Etingof, P.I.},
author={Golberg, O.},
author={Hensel, S.},
author={Liu, T.},
author={Schwendner, A.},
author ={Vaintrob, D.},
author={Yudovina, E.},
title={Introduction to representation theory},
publisher={American Mathematical Society, Providence, RI},
year={2011},
pages={viii+228},
}

\bib{GHLMP96}{article}{
   author={Geck, M.},
   author={Hiss, G.},
   author={L\"ubeck, F.},
   author={Malle, G.},
   author={Pfeiffer, G.},
   title={CHEVIE---a system for computing and processing generic character tables},
   journal={Appl. Algebra Engrg. Comm. Comput.},
   volume={7},
   date={1996},
   number={3},
   pages={175--210}
}

\bib{GM20}{book}{
author={Geck, M.},
author={Malle, G.},
title={The character theory of finite groups of Lie type},
series={Cambridge Studies in Advanced Mathematics},
volume={187},
publisher={Cambridge University Press, Cambridge},
date={2020}}

\bib{GoodwinRohrle}{article}{
   author={Goodwin, S.M.},
   author={R\"ohrle, G.},
   title={Rational points on generalized flag varieties and unipotent conjugacy in finite groups of Lie type},
   journal={Trans. Amer. Math. Soc.},
   volume={361},
   date={2009},
   number={1},
   pages={177--206},
   issn={0002-9947},
}


\bib{HLRV11}{article}{
author={Hausel, T.},
author={Letellier, E.},
author={Rodriguez-Villegas, F.},
title={Arithmetic harmonic analysis on character and quiver varieties},
journal={Duke Math. J.},
volume={160},
date={2011},
number={2},
pages={323--400},
}

\bib{HLVKacAnnals2013}{article}{
   author={Hausel, T.},
   author={Letellier, E.},
   author={Rodriguez-Villegas, F.},
   title={Positivity for Kac polynomials and DT-invariants of quivers},
   journal={Ann. of Math. (2)},
   volume={177},
   date={2013},
   number={3},
   pages={1147--1168},
   issn={0003-486X},
}

\bib{hausel2022p}{article}{
	title={$P= W$ via $ H_2$},
	author={Hausel, T.},
	 author={ Mellit, A.},
	  author={ Minets, A.},
	   author={ Schiffmann, O.},
	journal={arXiv:2209.05429},
	year={2022}
}

\bib{HRV08}{article}{
author={Hausel, T.},
author={Rodriguez-Villegas, F.},
title={Mixed Hodge polynomials of character varieties},
note={With an appendix by Nicholas M. Katz},
journal={Invent. Math.},
volume={174},
date={2008},
number={3},
pages={555--624},
}

\bib{HauselThaddeusInventiones2003}{article}{
   author={Hausel, T.},
   author={Thaddeus, M.},
   title={Mirror symmetry, Langlands duality, and the Hitchin system},
   journal={Invent. Math.},
   volume={153},
   date={2003},
   number={1},
   pages={197--229},
   issn={0020-9910},
}

\bib{HWW23}{article}{
   author={Hausel, T.},
   author={Wong, M.L.},
   author={Wyss, D.},
   title={Arithmetic and metric aspects of open de Rham spaces},
   journal={Proc. Lond. Math. Soc. (3)},
   volume={127},
   date={2023},
   number={4},
   pages={958--1027},
   issn={0024-6115},
   doi={10.1112/plms.12555},
}

\bib{Herpel}{article}{
  title={On the smoothness of centralizers in reductive groups},
  author={Herpel, S.},
   JOURNAL = {Trans. Amer. Math. Soc.},
  volume={365},
  number={7},
  pages={3753--3774},
  year={2013}
}

\bib{HitchinProceedingofLMS87}{article}{
   author={Hitchin, N.J.},
   title={The self-duality equations on a Riemann surface},
   journal={Proc. London Math. Soc. (3)},
   volume={55},
   date={1987},
   number={1},
   pages={59--126},
   issn={0024-6115}
}

\bib{HoskinPW}{article}{
author={Hoskins, V.},
title={Two proofs of the $P=W$ conjecture},
note={S\'eminaire Bourbaki: Vol. 2023/2024},
year={2023},
pages={Exp.\ No.\ 1213}
}

\bib{Humphreys95}{book}{
   author={Humphreys, J.E.},
   title={Conjugacy classes in semisimple algebraic groups},
   series={Mathematical Surveys and Monographs},
   volume={43},
   publisher={American Mathematical Society, Providence, RI},
   date={1995},
   pages={xviii+196},
   isbn={0-8218-0333-6},
   doi={10.1090/surv/043},
}

\bib{KNP25}{article}{
   author={Kamgarpour, M.},
   author={Nam, G.},
   author={Pusk\'as, A.},
   title={Arithmetic geometry of character varieties with regular monodromy},
   journal={Represent. Theory},
   volume={29},
   date={2025},
   pages={347--378},
}

	\bib{kamgarpour2012ramified}{article}{
     TITLE = {Ramified {S}atake isomorphisms for strongly parabolic
              characters},
              AUTHOR = {Kamgarpour, M.},
               author={ Schedler, T.},
  JOURNAL = {Documenta Mathematica},
    VOLUME = {18},
      YEAR = {2013},
     PAGES = {1275--1300},
}

\bib{Kilmoyer78}{article}{
author={Kilmoyer, R.W.},
title={Principal series representations of finite Chevalley groups},
journal={J. Algebra},
olume={51},
number={1},
pages={300--319},
year={1978},
}



\bib{Letellier05}{book}{
author={Letellier, E.},
title={Fourier transforms of invariant functions on finite reductive Lie algebras},
series={Lecture Notes in Mathematics},
volume={1859},
publisher={Springer-Verlag, Berlin},
date={2005},
}


\bib{LetellierVillegas}{article}{
author={Letellier, E.}, 
author={Rodrigeuz-Villegas, F.} 
title={$E$-series of character varieties of non-orientable surfaces},
journal={Annals Inst. Fourier},
Year={2023},
}

\bib{MarkmanGenerators2001}{article}{
   author={Markman, E.},
   title={Generators of the cohomology ring of moduli spaces of sheaves on
   symplectic surfaces},
   journal={J. Reine Angew. Math.},
   volume={544},
   date={2002},
   pages={61--82},
   issn={0075-4102},
}

\bib{maulik}{article}{
	title={The $P=W$ conjecture for $\GL_n$},
	author={Maulik, D.},
	 author={ Shen, J.},
 JOURNAL = {Ann. of Math. (2)},
	volume={200},
	number={2},
	pages={529--556},
	year={2024},
	publisher={Department of Mathematics, Princeton University Princeton, New Jersey, USA}
}

\bib{MS03}{article}{
   author={McNinch, G.J.},
   author={Sommers, E.},
   title={Component groups of unipotent centralizers in good characteristic},
   journal={J. Algebra},
   volume={260},
   date={2003},
   number={1},
   pages={323--337},
   issn={0021-8693},
}

\bib{MellitInventiones21}{article}{
   author={Mellit, A.},
   title={Poincar\'e{} polynomials of moduli spaces of Higgs bundles and
   character varieties (no punctures)},
   journal={Invent. Math.},
   volume={221},
   date={2020},
   number={1},
   pages={301--327},
   issn={0020-9910},
}

\bib{Mellit20}{article}{
author={Mellit, A.},
title={Poincar\'e{} polynomials of character varieties, Macdonald polynomials and affine Springer fibers},
journal={Ann. of Math. (2)},
volume={192},
date={2020},
number={1},
pages={165--228},
}

\bib{MellitDecomposition}{article}{
   author={Mellit, A.},
   title={Toric stratifications of character varieties},
   journal={Publ. Math. Inst. Hautes \'Etudes Sci.},
   volume={142},
   date={2025},
   pages={153--240},
   issn={0073-8301},
}

\bib{Michel15}{article}{
   author={Michel, J.},
   title={The development version of the {\tt CHEVIE} package of {\tt GAP3}},
   journal={J. Algebra},
   volume={435},
   date={2015},
   pages={308--336},
   issn={0021-8693},
}

\bib{Mumford}{book}{
author={Mumford, D.},
author={Fogarty, J.},
author={Kirwan, F.},
title={Geometric invariant theory},
series={Ergebnisse der Mathematik und ihrer Grenzgebiete (2)},
volume={34},
edition={3},
publisher={Springer-Verlag, Berlin},
date={1994},
pages={xiv+292},
}

\bib{Premet03}{article}{
author={Premet, A.},
title={Nilpotent orbits in good characteristic and the Kempf-Rousseau theory},
journal={J. Algebra},
volume={260},
date={2003},
number={1},
pages={338--366},
}

\bib{SchiffmannAnnals2016}{article}{
   author={Schiffmann, O.G.},
   title={Indecomposable vector bundles and stable Higgs bundles over smooth
   projective curves},
   journal={Ann. of Math. (2)},
   volume={183},
   date={2016},
   number={1},
   pages={297--362},
   issn={0003-486X},
}

\bib{Schiffmann18}{article}{
author={Schiffmann, O.G.},
title={Kac polynomials and Lie algebras associated to quivers and curves},
conference={
   title={Proceedings of the International Congress of
   Mathematicians---Rio de Janeiro 2018. Vol. II. Invited lectures},
},
book={
   publisher={World Sci. Publ., Hackensack, NJ},
},
date={2018},
pages={1393--1424},
}

\bib{Sikora}{article}{
    AUTHOR = {Sikora, A. S.},
     TITLE = {Character varieties},
   JOURNAL = {Trans. Amer. Math. Soc.},
    VOLUME = {364},
      YEAR = {2012},
    NUMBER = {10},
     PAGES = {5173--5208},
   MRCLASS = {14D20 (14L24 53D30 57M50)},
  MRNUMBER = {2931326},
MRREVIEWER = {Benjamin\ M. S. Martin},
}

\bib{Simpson92}{article}{
   author={Simpson, C.T.},
   title={Higgs bundles and local systems},
   journal={Inst. Hautes \'Etudes Sci. Publ. Math.},
   number={75},
   date={1992},
   pages={5--95},
   issn={0073-8301},
}

\bib{Simpson94i}{article}{
   author={Simpson, C.T.},
   title={Moduli of representations of the fundamental group of a smooth
   projective variety. I},
   journal={Inst. Hautes \'Etudes Sci. Publ. Math.},
   number={79},
   date={1994},
   pages={47--129},
   issn={0073-8301},
}

\bib{Springer84}{article}{
author={Springer, T.A.},
title={A purity result for fixed point varieties in flag manifolds},
journal={J. Fac. Sci. Univ. Tokyo Sect. IA Math.},
volume={31},
date={1984},
number={2},
pages={271--282},
}


\bib{Steinberg65}{article}{
   author={Steinberg, R.},
   title={Regular elements of semisimple algebraic groups},
   journal={Inst. Hautes \'Etudes Sci. Publ. Math.},
   number={25},
   date={1965},
   pages={49--80},
   issn={0073-8301}
}

\bib{Steinberg75}{article}{
author={Steinberg, R.},
title={Torsion in reductive groups},
journal={Advances in Math.},
volume={15},
date={1975},
pages={63--92},

}

\bib{Taylor22}{article}{
   author={Taylor, J.},
   title={Harish-Chandra cuspidal pairs},
   journal={Indag. Math. (N.S.)},
   volume={33},
   date={2022},
   number={1},
   pages={24--38},
   issn={0019-3577},
}

\bib{Bailey}{webpage}{
author = {Whitbread, B.},
url = {https://github.com/baileywhitbread/CharacterVarieties.jl}
Year={2024}
}

\end{biblist}
\end{bibdiv}

\end{document}